\newtheorem{thm}{Theorem}[section]
\newtheorem*{thmA}{\textbf{Theorem A}}
\newtheorem*{thmB}{\textbf{Theorem B}}
\newtheorem*{thmC}{\textbf{Theorem C}}
\newtheorem*{corD}{\textbf{Corollary D}}
\newtheorem{cor}[thm]{Corollary}
\newtheorem{lem}[thm]{Lemma}
\newtheorem{prop}[thm]{Proposition}
\theoremstyle{definition}
\newtheorem{defn}[thm]{Definition}
\newtheorem{cons}[thm]{Construction}
\newtheorem{exmpl}[thm]{Example}
\newtheorem{rem}[thm]{Remark}
\newcommand{\Cref}[1]{{Corollary~\ref{#1}}}
\newcommand{\Csref}[1]{{Construction~\ref{#1}}}
\newcommand{\Eref}[1]{{Example~\ref{#1}}}
\newcommand{\Lref}[1]{{Lemma~\ref{#1}}}
\newcommand{\Pref}[1]{{Proposition~\ref{#1}}}
\newcommand{\Rref}[1]{{Remark~\ref{#1}}}
\newcommand{\Sref}[1]{{Section~\ref{#1}}}
\newcommand{\Tref}[1]{{Theorem~\ref{#1}}}
\newcommand{\sub}{\subseteq}
\newcommand{\sg}[1]{\left\langle #1 \right\rangle}
\newcommand{\set}[1]{\left\{ #1 \right\}}
\renewcommand{\Pr}{\mathbb{P}}
\newcommand{\E}{\mathbb{E}}
\newcommand{\N}{\mathbb{N}}
\newcommand{\Sm}{\mathcal{S}_{\vec{m}}}
\newcommand{\SSm}{\overline{\mathcal{S}}_{\vec{m}}}
\newcommand{\ind}[1]{\boldsymbol{1}_{#1}}
\newcommand{\ff}{F}
\newcommand{\floor}[1]{\left\lfloor #1\right\rfloor}
\newcommand{\ceil}[1]{\left\lceil #1\right\rceil}
\newcommand{\dist}{\operatorname{dist}}
\newcommand{\Cay}{\operatorname{Cay}}
\newcommand{\eps}{\varepsilon}
\title{The speed of random walks on semigroups}
\author{Guy Blachar}
\address{Department of Mathematics, The Weizmann Institute of Science, Rehovot 7610001, Israel}
\email{guy.blachar@gmail.com}
\author{Be'eri Greenfeld}
\address{Department of Mathematics, University of Washington, Seattle, WA, 98195, USA}
\email{grnfld@uw.edu}
\keywords{Random walks, random walks on semigroups, speed exponents, speed of random walks}
\subjclass[2020]{05C81, 60B15, 20M05, 20F69}
\begin{document}

\begin{abstract}
We construct, for each real number $0\leq \alpha \leq 1$, a random walk on a finitely generated semigroup whose speed exponent is $\alpha$. We further show that the speed function of a random walk on a finitely generated semigroup can be arbitrarily slow, yet tending to infinity. These phenomena demonstrate a sharp contrast from the group-theoretic setting.
On the other hand, we show that the distance of a random walk on a finitely generated semigroup from its starting position is infinitely often larger than a non-constant universal lower bound, excluding a certain degenerate case.
\end{abstract}

\maketitle

\section{Introduction}

Random walks have proven to be essential in the study of many algebraic, geometric and probabilistic properties of finitely generated groups, such as amenability, growth, and metric embeddings of groups \cite{ANP,BartholdiVirag05,ErschlerZheng20,Kesten59,NP,Zheng22}. A key ingredient in the study of these properties is to quantify the ``drift'' of the random walk from its starting point, and investigate how properties of the group as those mentioned above reflect in the asymptotic behavior of the random walk.

One such quantity which has been explored in these studies is the \textbf{speed} (also known as the \textbf{rate of escape}) of the random walk. Let $\Gamma$ be a finitely generated group with a finite symmetric generating set $T$, and let $\{R_n\}_{n=0}^{\infty}$ be a finitely supported symmetric random walk on $\Gamma$ such that the support of $R_1$ generates $\Gamma$ as a semigroup. The speed function of $\{R_n\}_{n=0}^{\infty}$ is defined as the function $n\mapsto\E\left|R_n\right|$, where $\left|\cdot\right|$ denotes the word metric on $\Gamma$ with respect to $S$. We further define the \textbf{speed exponent} of the random walk as $\limsup_{n\to\infty}\log_n\E\left|R_n\right|$.

Consider the following two test cases. If $\Gamma=\mathbb{Z}$ then the speed function of any symmetric random walk over it is $\asymp \sqrt{n}$ (the `diffusive' case), as follows from the Central Limit Theorem. In fact, Hebisch--Saloff-Coste \cite{HSC93} proved that for any nilpotent group, $\E|R_n|\asymp \sqrt{n}$ (see \cite{Thompson13} for certain extensions).
On the other extreme, if $\Gamma=\mathbb{F}_d$ is a non-abelian free group then the speed function of any random walk on it is $\asymp n$ (the `ballistic' case). This generalizes to simple random walks on any non-amenable group. Indeed, Kesten's theorem \cite{Kesten59} shows that the spectral radius of such random walks is smaller than $1$, which in particular implies that the speed function grows linearly.

The following ``inverse problem'' thus naturally arises: Which functions can be (asymptotically) realized as speed functions of random walks on finitely generated groups? Specifically, which real numbers occur as speed exponents of random walks on finitely generated groups? These fundamental problems are attributed in \cite{AV} to Vershik (and reformulated by Peres).

The speed function of a random walk on $\Gamma$ is bounded if and only if $\Gamma$ is finite. Lee--Peres \cite{LP} (also attributed there to Erschler and Vir\'{a}g) showed that if $\Gamma$ is infinite then $\E|R_n|\gtrsim \sqrt{n}$; in fact, they proved it for random walks on any amenable transitive graph. Hence the speed exponent of a random walk on any infinite group lies in the interval $[\frac{1}{2},1]$.
Erschler \cite{Erschler01,Erschler06,Erschler10} showed that the numbers $1-2^{-i},\ i=1,2,\dots$ occur as speed exponents of random walks on iterated wreath products of $\mathbb{Z}$; realized functions oscillating between two different speed exponents as speed functions; and constructed sub-linear speed functions that are arbitrarily close to linear. 
Next, Amir--Vir\'{a}g realized any `sufficiently regular' function $f$ such that $n^{3/4} \lesssim  f(n) \lesssim n^{1-\varepsilon}$ for some $\varepsilon>0$ as the speed function over a suitable finitely generated group, extending the known spectrum of speed exponents to $[\frac{3}{4},1]$; see also \cite{Amir17} for a joint control on the speed and entropy functions.
Finally, Brieussel--Zheng \cite{BZ21} realized any sufficiently regular function $\sqrt{n}\lesssim f(n) \lesssim n$ as the speed function over some finitely generated group, thereby identifying the spectrum of possible speed exponents as the full interval $[\frac{1}{2},1]$. For more on random walks on discrete groups see the survey \cite{Zheng22}. \medskip

In this paper, we focus on random walks on \textit{semigroups}. Such random walks have been studied in \cite{ForghaniTiozzo19, GrayKambites20, HognasMukherjea11}. Our main goal in the paper is to prove that the spectrum of speed exponents for random walks on semigroups is the full interval $[0,1]$, in a vast contrast to the group-theoretic setting:

\begin{thmA}[{Arbitrary speed exponents}]
Let $\alpha\in [0,1]$. Then there exists a finitely generated semigroup $S$ and a simple random walk $\{R_n\}_{n=0}^{\infty}$ on $S$ such that $$\limsup_{n\rightarrow \infty} \log_n \E|R_n| = \alpha.$$
\end{thmA}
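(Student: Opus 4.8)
The plan is to build, for each $\alpha\in[0,1]$, a semigroup $S$ together with a symmetric generating set and a simple random walk whose distance from the origin grows like $n^\alpha$ up to sub-polynomial corrections. The governing idea is that in a semigroup (unlike a group) elements need not be invertible, so one can engineer ``one-way'' behavior that freezes a large portion of the accumulated trajectory: the word length of $R_n$ will not be the full number of steps but only a controlled $n^\alpha$-sized portion of it. Concretely, I would look for a semigroup presented by relations that allow a generator to be absorbed or annihilated once a ``counter'' reaches a threshold, so that roughly $n^{1-\alpha}$ letters of a length-$n$ random word get killed while $n^\alpha$ survive; the length of $R_n$ is then $\asymp n^\alpha$. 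A clean way to realize this is via a left-zero or absorbing structure combined with a free part, e.g. a Rees-type or semidirect construction $S = A \ast B / (\text{relations})$, where $B$ contributes a genuinely growing free monoid but $A$ repeatedly ``cuts off'' prefixes at a rate dictated by $\alpha$.

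The key steps, in order, would be: (1) Fix a target function $f(n) \asymp n^\alpha$ and design a finite presentation — I would aim for a monoid of the form generated by letters $\{a,b,\dots\}$ with relations of the type $a^k b = b$ (or $a^k x = a^{k'} x$), tuned so that a uniformly random length-$n$ word reduces to a normal form of length $\Theta(n^\alpha)$ with high probability. (2) Set up the random walk as the simple (uniform) random walk on the Cayley graph $\Cay(S,T)$ with $T$ the chosen generating set; note that in the semigroup setting $R_n$ is obtained by right-multiplying $R_{n-1}$ by a uniformly chosen generator, and $|R_n|$ is the distance in the directed/undirected Cayley graph. (3) Prove the upper bound $\E|R_n| \lesssim n^\alpha$ (times logs): this reduces to a deterministic statement that \emph{every} length-$n$ word over $T$ has image of word-length $O(n^\alpha)$, or an easy concentration statement if the bound is only typical. (4) Prove the matching lower bound $\E|R_n| \gtrsim n^\alpha$: here one must show the random word genuinely accumulates $\Omega(n^\alpha)$ surviving letters, which is a counting/large-deviations estimate — typically the trajectory visits the ``cut'' states $\Theta(n^{1-\alpha})$ times and between consecutive cuts picks up $\Theta(n^\alpha)$ net length, or some such balance. (5) Handle the two boundary cases: $\alpha = 1$ is essentially a free monoid (speed $\asymp n$), and $\alpha = 0$ requires a semigroup where the walk stays at bounded-but-unbounded... no — for the exponent $0$ one wants $|R_n| \to \infty$ slower than any $n^\eps$, e.g. $|R_n| \asymp \log n$; this can be arranged with a presentation whose ``cutting rate'' is super-polynomial, and the exponent of such a walk is $0$. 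Finally, assemble these into one uniform construction, perhaps parametrized by a single real $\alpha$ fed into the exponents appearing in the relations.

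The main obstacle I anticipate is the lower bound in step (4): it is easy to write relations that make lengths \emph{collapse}, but controlling that they do not collapse \emph{too much} — that the random normal form is genuinely of length $\Theta(n^\alpha)$ and not, say, $O(1)$ or $O(\log n)$ — requires understanding the typical trajectory of the induced walk on the ``counter'' coordinate and showing the surviving letters cannot be further reduced (i.e., the normal form is rigid). This couples a probabilistic estimate (how often does the counter hit the threshold) with an algebraic one (that the kept letters form a geodesic word in $S$, so no hidden relations shorten them further). A secondary subtlety is achieving \emph{every} real $\alpha$, not just a countable family: the exponents in the defining relations are integers, so a single presentation cannot directly encode an irrational $\alpha$; one resolves this by letting the random walk itself, rather than the semigroup, carry the parameter — e.g. the walk's step distribution (or an auxiliary randomized ``clock'') is tuned so that cuts occur at density $n^{-(1-\alpha)}$ in expectation, which is compatible with $\alpha$ irrational while keeping $S$ finitely generated. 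Verifying that such a walk is still a \emph{simple} random walk on a genuinely finitely generated semigroup (as the theorem demands) is where I would be most careful.
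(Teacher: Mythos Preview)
Your proposal is a strategy outline rather than a proof, and the outline has a concrete mistake at the point you yourself flag as most delicate. You worry that a finite presentation cannot encode an irrational $\alpha$ because the exponents in the relations are integers, and you propose to fix this by letting the \emph{step distribution} carry the parameter. But the statement asks for a \emph{simple} random walk, i.e.\ the uniform measure on a finite generating set; you are not allowed to tune the step distribution, so this escape route is closed. The paper resolves this differently: the presentation is allowed to have \emph{infinitely many} relations (while still being finitely generated), and the real parameter $\alpha$ is encoded in an infinite integer sequence $\vec{m}=(m_1,m_2,\dots)$ that governs those relations. Specifically, one takes $\beta=\frac{1}{1-\alpha}$ and $m_i=1+\lfloor\beta\rfloor+\cdots+\lfloor\beta^{i-1}\rfloor$; the floors are integers, but their growth rate carries $\beta$ and hence $\alpha$.

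Beyond this, the proposal never commits to a construction, and the candidate relations you mention ($a^k b=b$, or $a^k x=a^{k'}x$) are too crude to produce a tunable exponent: they collapse a bounded-expectation prefix and leave you with essentially linear speed on the rest. The paper's mechanism is more delicate. In the semigroup $\mathcal{S}_{\vec m}=\langle x,y\mid x^2=x,\ xy^jxy^{j'}x=xy^jx\text{ if }j'\prec j\rangle$, the positive integers are partitioned into blocks $[m_k,m_{k+1})$, and a factor $xy^jx$ absorbs a following $xy^{j'}x$ only when $j'$ lies in a strictly lower block than $j$. Thus the reduced form of $R_n$ retains, for each block $[m_k,m_{k+1})$, exactly those $y$-runs of that size that were not preceded by a longer run. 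The expected contribution of block $k$ is $\asymp m_k 2^{m_{k+1}-m_k}$ (Propositions~\ref{prop:up-bound-pi-k}--\ref{prop:low-bound-N-k}), and summing gives $\E|R_n|\asymp\gamma_{\vec m}(n)=\sum_{k:\,m_{k+1}\le\log_2 n} m_k 2^{m_{k+1}-m_k}$ up to an additive $n^{(\beta-1)/\beta}\log n$ error (Lemma~\ref{lem:speed gamma}). With the geometric choice of $m_i$ above, one checks $\limsup_n\log_n\gamma_{\vec m}(n)=\alpha$. The lower bound you correctly anticipate as the hard step is handled by a direct first-moment count of surviving $xy^jx$ blocks (Proposition~\ref{prop:low-bound-N-k}), made tractable precisely because Bergman's diamond lemma gives an explicit normal form; your worry about ``hidden relations shortening the kept letters further'' is exactly what the diamond lemma dispels.
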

We also show that, again in contrast with the group-theoretic case, there is no universal gap between constant speed functions and arbitrary increasing function; namely, there is no semigroup analog of the aforementioned result of Lee--Peres:
\begin{thmB}[{Arbitrarily slow speed functions}]\label{thm:no-low-bound}
Let $\omega$ be an arbitrarily slow function such that $\omega(n)\xrightarrow{n\rightarrow \infty} \infty$. Then there exists a finitely generated semigroup $S$ and a simple random walk $\{R_n\}_{n=0}^{\infty}$ on $S$ such that $$\E|R_n|\le \omega(n)\ \ \text{for infinitely many}\ n$$
while $\E|R_n|\xrightarrow{n\rightarrow \infty} \infty$.
\end{thmB}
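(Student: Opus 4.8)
The plan is to obtain both $S$ and the walk from a single tunable construction — the parametrized family of finitely generated semigroups $\Sm$ (or a variant $\SSm$) underlying the proof of Theorem A — by choosing the parameter vector $\vec m=(m_1,m_2,\dots)$ recursively in terms of $\omega$. The qualitative picture I want to extract is a \emph{sawtooth} profile for the word length of the simple random walk $\{R_n\}$ on $\Sm$: time is cut into consecutive \emph{epochs}, the $k$-th of which ends at a deterministic time $n_k$ with $n_k-n_{k-1}$ a prescribed, $\vec m$-controlled length; during an epoch $\E|R_n|$ climbs, while at the epoch's end the walk undergoes a \emph{collapse}, so that $\E|R_{n_k}|$ is bounded by a quantity depending only on the index $k$ — crucially \emph{not} on the (possibly astronomically large) epoch lengths. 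Concretely I would isolate two statements about $\Sm$: a \emph{collapse estimate}, $\E|R_{n_k}|\le Ck$ for an absolute constant $C$ and all $k$; and a \emph{floor estimate}, $\E|R_n|\ge\phi_{\vec m}(n)$ for some $\phi_{\vec m}$ with $\phi_{\vec m}(n)\to\infty$ (morally: once the walk has cleared $k$ epochs its distance is at least of order $k$, and the number of cleared epochs tends to infinity).

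Granting these two estimates, Theorem B follows by a diagonal choice of $\vec m$. We build $\vec m$ one coordinate at a time: having fixed $m_1,\dots,m_{k-1}$, hence $n_{k-1}$, we use $\omega(n)\to\infty$ to pick an integer $M$ with $\omega(n_{k-1}+t)\ge Ck$ for all $t\ge M$, and then choose $m_k$ large enough that the $k$-th epoch has length at least $M$; this forces $\omega(n_k)\ge Ck$. With this $\vec m$, the collapse estimate gives $\E|R_{n_k}|\le Ck\le\omega(n_k)$ for every $k$, so $\E|R_n|\le\omega(n)$ holds for the infinitely many $n$ in $\{n_k:k\ge 1\}$, while the floor estimate together with $n_k\to\infty$ forces $\E|R_n|\to\infty$. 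The whole point is that $\vec m$, and hence the collapse times $n_k$, may be pushed to grow as fast as we like without disturbing the $O(k)$ bound at those times; this is exactly what lets a divergent expected distance keep dipping below an arbitrarily slow $\omega$, and it is a phenomenon with no group-theoretic analogue (contrast the Lee--Peres bound $\E|R_n|\gtrsim\sqrt n$).

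The substance, and the step I expect to be the genuine obstacle, is the collapse estimate — in particular forcing the collapse level to depend only on $k$. Since $\{R_n\}$ is a random walk and not a geodesic, after $n_k$ uniformly random generator-steps $R_{n_k}$ is a generic long word, and one must show that the defining relations of $\Sm$ reduce a \emph{positive proportion} of that random mass to an element of word length $O(k)$. Two things must be controlled at once. First, the relations must be contractive enough that a typical long random word folds essentially all the way back towards the identity — impossible in a group, so the construction must exploit the non-invertibility and the directedness of the Cayley graph of $\Sm$; technically this comes down to analyzing the rewriting dynamics of a random word, equivalently a strongly contractive, position-dependent Markov chain governing $|R_n|$. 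Second, the collapse must be \emph{localized in time}: it has to occur near the deterministic time $n_k$ with probability bounded below, rather than being smeared over a window comparable to an epoch by fluctuations in the epoch lengths — which I expect forces the epochs to be delimited by a nearly deterministic internal clock (for instance, parallel tracks of equal length merging at a low-depth vertex), so that the walk is funnelled to a short element at a predictable moment. Once the collapse estimate is in hand, the floor estimate should follow from the same bookkeeping that yields the lower bound in Theorem A, applied with the present $\vec m$.
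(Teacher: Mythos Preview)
Your diagonal framework is correct and is exactly what the paper does: choose $\vec m$ recursively so that at the checkpoints $n_k$ the upper bound on $\E|R_{n_k}|$ is already determined by $m_1,\dots,m_{k-1}$, then push $n_k$ far enough out that $\omega(n_k)$ exceeds that bound. The floor estimate is likewise the same lower bound used for Theorem~A.

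However, the collapse estimate you aim for, $\E|R_{n_k}|\le Ck$ with an absolute constant $C$, is not what is provable and not what is needed. In the paper the bound at the checkpoint $n_k=m_k$ is $\sum_{i<k}(m_i+2)+3$, which depends on \emph{all} the previously chosen $m_i$ and can grow arbitrarily fast in $k$. What matters is only that it does not depend on $m_k$; that alone is enough for the diagonal step, since $\omega\to\infty$ lets you make $\omega(m_k)$ beat any finite number once $m_1,\dots,m_{k-1}$ are fixed.

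More importantly, your picture of the mechanism --- a random word that dynamically ``folds back'' near a deterministic time, with worries about localizing the collapse and synchronizing an internal clock --- is off target and would send you chasing a hard problem that does not arise. There is no collapse event. The point is purely structural and happens at the level of normal forms: one passes from $\Sm$ to its quotient $\SSm$ by adding the relation $xy^jxy^{j'}x=xy^jx$ whenever $j'\preceq j$ (not just $j'\prec j$). In $\SSm$ the reduced form of any element has at most one block $xy^j$ per equivalence class $[m_k,m_{k+1})$, so the $k$-th block $\overline\pi_k(R_n)$ has length at most $m_k+2$ regardless of $n$. Summing, $\E|R_n|\le\sum_{k:\,m_k\le n-2}(m_k+2)+3$, and this upper bound is literally constant on $[2^{m_{k-1}},m_k)$ --- a plateau, not a sawtooth. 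In $\Sm$ itself this fails: the $k$-th block is a free product and its expected length is of order $2^{m_{k+1}-m_k}$, which \emph{does} depend on $m_{k+1}$, so no checkpoint bound independent of the current parameter is available. You noted $\SSm$ in passing; it is not optional here.
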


Although semigroups are more algebraically flexible than groups, there are some fundamental challenges in constructing semigroups whose speed functions are significantly different from those of groups, as we now demonstrate. First, if a semigroup $S$ contains a finite one-sided ideal then the speed function is bounded (\Pref{prop:finite-ideals}). This eliminates many natural constructions of semigroups, including monomial semigroups and generally semigroups with zero, so $S$ must be far from a pathological semigroup and, in some sense `closer' to a group. However, if $S$ surjects onto an infinite group then, by \cite{LP}, its speed function is $\gtrsim \sqrt{n}$. So, in some sense, $S$ must also be `orthogonal' to groups. Moreover, if there is a semigroup homomorphism, $S\rightarrow \mathbb{N}$ with infinite image, then its speed is $\asymp n$. This observation eliminates, in particular, the use of graded semigroups.\medskip

Finally, we show that while the speed functions of random walks on semigroups are quite flexible, the rate at which such random walks escape from their starting points -- namely, the distribution of the random variables $\{|R_n|\}_{n=1}^{\infty}$ itself -- cannot be completely arbitrary. This follows from a more general result on random walks on directed graphs.

Let $G=(V,E)$ be a rooted directed graph, i.e., a directed graph with a root $o\in V$ such that there is a path from $o$ to each vertex in $G$. We define the \textbf{rooted ball spread} of $G$ as the function
\[
    \ff_G(n) = \min_{v\in V} \max_{w:\dist(v,w)\le n}\dist(o,w).
\]
When $G$ is undirected (i.e., $E$ is symmetric), then $\ff_G(n)\ge\frac{n}{2}$ (see \Rref{rem:undirected}). If~$G$ has a finite strongly connected component, then $\ff_G$ is bounded (\Pref{prop:f-bound}).
We assume that all the outdegrees in~$G$ are finite. If the outdegrees of vertices in~$G$ are uniformly bounded then $F_G(n) \gtrsim \log n$, see \Cref{cor:f-bound}.

We now consider a simple random walk $\{R_n\}_{n=0}^{\infty}$ on $G$ starting from the root $o$. 

\begin{thmC}
Let $G=(V,E)$ be a rooted directed graph without finite strongly connected components, where all outdegrees are bounded by $d\in\N$. Let $\{R_n\}_{n=0}^{\infty}$ be a simple random walk on $G$. Then, almost surely, $$\dist(o,R_n)\ge F_G(\log_d n+\log_d\log_d n)$$ for infinitely many values of $n$.
\end{thmC}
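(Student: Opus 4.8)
The plan is to reduce the statement to a lower bound on the first-passage times of the walk and then feed it to the conditional (martingale) Borel--Cantelli lemma. For $r\ge1$ set $\sigma_r=\min\{n:\dist(o,R_n)\ge r\}$. Since one step increases the distance to $o$ by at most $1$, we have $\dist(o,R_{\sigma_r})=r$ exactly and $\sigma_1<\sigma_2<\cdots$; and the hypotheses give $F_G(n)\to\infty$ (by \Cref{cor:f-bound}), so each $\sigma_r$ is a.s.\ finite. Put $\kappa(r)=\max\{k:F_G(k)\le r\}$ and $t_r=\max\{t\in\N:\log_d t+\log_d\log_d t\le\kappa(r)\}$; one checks $t_r\asymp d^{\kappa(r)}/\kappa(r)$, and this budget is exactly where the $+\log_d\log_d n$ of the statement is used: for $t\asymp d^{\kappa(r)}/\kappa(r)$ one has $\log_d t+\log_d\log_d t\approx\kappa(r)$, whereas a budget of $d^{\kappa(r)}$ would overshoot $\kappa(r)$ by $\approx\log_d\kappa(r)$. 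If $\sigma_r\le t_r$, then with $n:=\sigma_r$ we get $\dist(o,R_n)=r$ while $\log_d n+\log_d\log_d n\le\log_d t_r+\log_d\log_d t_r\le\kappa(r)$, hence $F_G(\log_d n+\log_d\log_d n)\le F_G(\kappa(r))\le r$. As the $\sigma_r$ are strictly increasing and unbounded, it therefore suffices to prove that $\sigma_r\le t_r$ for infinitely many $r$, almost surely.

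The probabilistic engine is the trivial fact that from any vertex the walk follows a prescribed path of length $\ell$ with probability $\ge d^{-\ell}$; together with the definition of $F_G$ this says that from any vertex there is a path of length at most $k_r:=\min\{k:F_G(k)\ge r\}$ ending at distance $\ge r$ from $o$, which the walk follows with probability $\ge d^{-k_r}$. Viewing one such run as an \emph{attempt} --- whose duration is the initial segment of the path actually realized --- one shows that, in any window of $m$ consecutive steps and uniformly over the walk's position at the start of the window, the walk reaches distance $\ge r$ from $o$ within the window with conditional probability $\gtrsim\min(1,\,m\,d^{-k_r})$. The crucial point is that an \emph{unsuccessful} attempt tends to be short: the walk can leave a path only at a vertex of out-degree $\ge2$, and a maximal run of out-degree-$1$ vertices is a forced path which, since $G$ has no finite strongly connected component, cannot close up and so carries the walk off to ever-larger distances rather than stalling; a careful accounting then gives $\gtrsim m$ attempts per $m$ steps (not just the naive $m/k_r$), which is what yields the sharp $\min(1,\,m\,d^{-k_r})$.

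Finally, run the Borel--Cantelli scheme along the level sets of $F_G$. Let $\rho^{(1)}<\rho^{(2)}<\cdots$ enumerate the distinct values of $F_G$, let $[a_j,b_j]$ be the interval on which $F_G\equiv\rho^{(j)}$ (so $k_{\rho^{(j)}}=a_j$, $\kappa(\rho^{(j)})=b_j$, $a_j=b_{j-1}+1$, and $b_j\ge j$), and set $t^{(j)}=t_{\rho^{(j)}}\asymp d^{b_j}/b_j$ and $\mathcal F_t=\sigma(R_0,\dots,R_t)$. Because $b_j-b_{j-1}\ge1$ we have $d^{b_j}\ge d\cdot d^{b_{j-1}}$, so the windows $(t^{(j-1)},t^{(j)}]$ are disjoint and of length $\asymp d^{b_j}/b_j$. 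Let $C_j\in\mathcal F_{t^{(j)}}$ be the event that the walk reaches distance $\ge\rho^{(j)}$ from $o$ at some point of $(t^{(j-1)},t^{(j)}]$; then $C_j$ implies $\sigma_{\rho^{(j)}}\le t^{(j)}=t_{\rho^{(j)}}$, so by the first paragraph it is enough to show $C_j$ occurs for infinitely many $j$ a.s. By the second paragraph $\Pr[C_j\mid\mathcal F_{t^{(j-1)}}]\ge c\min\bigl(1,\,d^{b_j-a_j}/b_j\bigr)$ for some $c=c(d)>0$, and $\sum_j\min\bigl(1,\,d^{b_j-a_j}/b_j\bigr)=\infty$ for every $F_G$: if $b_j-a_j\ge\log_d b_j$ infinitely often, infinitely many of the terms equal $1$; otherwise $b_j-b_{j-1}=b_j-a_j+1\le 1+\log_d b_j$ for all large $j$, which forces $b_j\lesssim j\log j$, and then $\sum_j d^{b_j-a_j}/b_j\ge\sum_j 1/b_j=\infty$. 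Hence $\sum_j\Pr[C_j\mid\mathcal F_{t^{(j-1)}}]=\infty$ a.s., and the conditional Borel--Cantelli lemma finishes the proof.

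The part I expect to be most delicate is this joint calibration: the budget $t^{(j)}$ must be small enough that $\log_d t^{(j)}+\log_d\log_d t^{(j)}\le b_j$ (forcing $t^{(j)}\asymp d^{b_j}/b_j$, which is what buys the $+\log_d\log_d n$) and simultaneously large enough that the success probabilities $\asymp\min(1,\,d^{b_j-a_j}/b_j)$ still sum to infinity --- and this must work for an \emph{arbitrary} non-decreasing integer-valued $F_G$ with $\log k\lesssim F_G(k)\le k$, both when $F_G$ has very long level sets and when it is very steep. A second, genuinely necessary technicality is the first-passage bound of the second paragraph: the weaker count of $m/k_r$ attempts per $m$ steps (from padding every attempt to length $k_r$) would give only $\sum_j1/b_j^2<\infty$, so one must exploit the absence of finite strongly connected components to obtain the true $\gtrsim m$ attempts, hence $\min(1,\,m\,d^{-k_r})$.
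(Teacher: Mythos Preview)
Your overall strategy---reducing to $\sigma_r\le t_r$ infinitely often and running a conditional Borel--Cantelli scheme, together with the divergence dichotomy in your third paragraph---is correct and would complete the proof once the first-passage estimate
\[
\Pr\bigl[\text{reach distance }\ge r\text{ within }m\text{ steps}\,\big|\,\mathcal F\bigr]\ \gtrsim\ \min\bigl(1,\,m\,d^{-k_r}\bigr)
\]
is in hand. The gap is in your justification of this estimate. The intermediate claim ``$\gtrsim m$ attempts per $m$ steps'' is simply false: if the target path from some vertex consists of $k_r-1$ out-degree-$1$ vertices followed by a single branch point, then every attempt has duration $\asymp k_r$, so there are only $\asymp m/k_r$ attempts. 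Your observation that degree-$1$ stretches cannot close up (absence of finite SCCs) rules out directed cycles, but it does \emph{not} force $\dist(o,\cdot)$ to increase along such a stretch---a forward edge in a directed graph can drop the distance to the root. What actually rescues the bound in the example above is that each attempt succeeds with probability $\ge 1/d$, not $d^{-k_r}$; but your argument plugs in the uniform worst case $d^{-k_r}$ per attempt, so this tradeoff between attempt length and per-attempt success probability is lost. Making a direct ``attempts'' argument rigorous would require tracking this tradeoff jointly across varying starting vertices, which is genuinely delicate.

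The paper sidesteps the issue by a device you do not use: it first regularizes $G$ to a $d$-out-regular graph $G'$ by adding self-loops and couples the walks so that any success on $G'$ by time $t$ gives a success on $G$ by some time $\le t$. On $G'$ the walk is literally a uniform random word over a $d$-letter alphabet, and a second-moment (Paley--Zygmund) computation---\Pref{prop:subwords} in the appendix---shows that a prescribed length-$k$ pattern, even one depending on the prefix, occurs in a length-$m$ word with probability $\gtrsim\min(1,m\,d^{-k})$. This is exactly the bound you need, obtained with no case analysis on out-degrees. The paper then runs Borel--Cantelli over the fixed geometric windows $[d^{n-1},d^n)$ with target length $k_n=n+\log_d n$, so each term is $\gtrsim 1/n$; this is lighter bookkeeping than your level-set windows, though your windowing is also correct once the first-passage bound is available.
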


For random walks on semigroups, this theorem translates to:

\begin{corD} \label{cor:semigroup loglog}
     Let $S$ be a $d$-generated semigroup with no finite right ideals. Let $\{R_n\}_{n=0}^{\infty}$ be a simple (right) random walk on $S$. Then, almost surely, $$|R_n|\ge \ff_S(\log_d n+\log_d\log_d n)$$ for infinitely many values of $n$.
\end{corD}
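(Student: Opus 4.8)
The plan is to deduce Corollary~D from Theorem~C applied to the right Cayley graph of $S$. Fix a generating set $A\sub S$ with $|A|=d$, adjoin an identity to form the monoid $S^1=S\cup\set{1}$ (if $S$ lacks one), and let $G=\Cay(S^1,A)$ be the edge-labelled directed graph with vertex set $S^1$ and a directed edge $s\to sa$ for every $s\in S^1$ and $a\in A$; take the root to be $o=1$. The structural hypotheses of Theorem~C are straightforward to check for $G$: since $A$ generates $S$, every element of $S$ is a nonempty product of generators and is therefore reachable from $o$ by a directed path (and $o$ reaches itself), so $G$ is a rooted directed graph; each vertex has exactly $d$ outgoing edges (one per generator), so all out-degrees are finite and bounded by $d$; and a directed path of length $k$ issuing from $s$ spells a word $w\in A^*$ of length $k$ and terminates at $sw$, whence $\dist(s,t)=\min\set{|w|:w\in A^*,\ sw=t}$ and in particular $\dist(o,s)=|s|$. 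Hence the rooted ball spread $\ff_G$ of $G$ equals $\ff_S$ directly from the definition, and the simple (right) random walk on $S$ — $R_0=1$, $R_n=R_{n-1}a_n$ with $a_1,a_2,\dots$ independent and uniform on $A$ — is exactly the simple random walk on $G$ started at $o$ (at each step follow a uniformly chosen out-edge, i.e.\ right-multiply by a uniform generator), with $\dist(o,R_n)=|R_n|$.

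It remains to see that $G$ has no finite strongly connected component, and this is where the hypothesis that $S$ has no finite right ideal enters. The key is the dictionary between the two notions: a finite right ideal of $S$, after passing to a minimal one $J$, is a finite subset of $S^1$ that is strongly connected in $G$ (for every $t\in J$ minimality forces $tS^1=J$) and absorbing ($Ja\sub J$ for all $a\in A$); conversely, if $C$ is a finite absorbing strongly connected component of $G$ then for any $c\in C$ the set $cS$ is a nonempty finite right ideal of $S$ contained in $C$. Under this correspondence ``$S$ has no finite right ideal'' is precisely ``$G$ has no finite absorbing strongly connected component''. I expect the only genuine subtlety to be confirming that ``no finite strongly connected component'' in Theorem~C is meant in this absorbing sense — the sense making \Pref{prop:f-bound} correct, since e.g.\ a loop at an idempotent generator already yields a finite strongly connected component that need not bound $\ff_G$ — with \Pref{prop:finite-ideals} being the semigroup shadow of \Pref{prop:f-bound} through the same dictionary.

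Granting this, $G$ satisfies all the hypotheses of Theorem~C, which then gives that almost surely $\dist(o,R_n)\ge \ff_G(\log_d n+\log_d\log_d n)$ for infinitely many $n$; substituting $\dist(o,R_n)=|R_n|$ and $\ff_G=\ff_S$ yields the statement. The main obstacle is thus not probabilistic but a matter of bookkeeping: pinning down the ``finite strongly connected component'' convention and the right-ideal dictionary of the second paragraph, together with the minor point of reading the random walk on $S$ literally as a simple random walk on the edge-labelled graph $G$ (generators being the out-edges), so that the two processes genuinely coincide. Note finally that the only way the hypothesis can fail trivially — $S=\set{e}$ with $e^2=e$, in which case $\set{e}$ is itself a finite right ideal — is excluded automatically.
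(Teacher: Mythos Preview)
Your approach matches the paper's exactly: Corollary~D is simply Theorem~C read off on the Cayley graph, and the paper offers no argument beyond the dictionary you spell out. Your worry about the mismatch between ``no finite right ideal'' and ``no finite strongly connected component'' is legitimate and is glossed over in the paper --- for instance, in $S=\langle x,y\mid x^2=x\rangle$ the singleton $\{x\}$ is a finite strongly connected component of $\Cay(S,\{x,y\})$ (via the loop $x\cdot x=x$) while $S$ has no finite right ideals, so the hypotheses do not line up literally. The clean resolution is not to reinterpret the hypothesis but to notice that the proof of \Tref{thm:low-bound-io} never actually uses it: the argument only needs that from every vertex there is a path of length at most $k_n$ to a vertex at distance $\geq \ff_G(k_n)$ from the root, which is immediate from the definition of $\ff_G$; the component hypothesis is present only so that \Cref{cor:f-bound} forces $\ff_G\to\infty$ and the conclusion is non-vacuous.
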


We show that this bound is tight in \Eref{exmpl:speed-bounded-dist-not}.
\medskip

\noindent \emph{Structure of the paper.} In Section~\ref{sec2}, we provide necessary background and preliminaries on Cayley graphs, random walks and presentations of semigroups. In Section \ref{sec3} we give the main construction of a semigroup which will serve us in proving Theorem A, which is done in Sections~\ref{sec4} and \ref{sec5}. In Section~\ref{sec6} we modify the main construction to prove Theorem B. In Section~\ref{sec7} we study random walks on directed graphs and conclude with Theorem C and Corollary D. Finally, Section~\ref{sec8} discusses the case of semigroups of bounded speed.

\medskip

\noindent \textit{Conventions.}
We write $f\gtrsim g$ when $f(n)\geq cg(n)$ for some positive constant $c>0$ and $f\asymp g$ if $f\gtrsim g$ and $g\gtrsim f$.
Our semigroups are assumed to have an identity element (namely, they are monoids). Random walks are assumed to be finitely supported and non-degenerate.

\section{Preliminaries on semigroups and random walks}\label{sec2}

\subsection{Cayley graphs}

Let $S$ be a finitely generated semigroup, with a finite generating set $T$. The \textbf{Cayley graph} of $S$ with respect to $T$ is the directed graph $\Cay(S,T)=(V,E)$, with vertex set $V=S$ and edges $E=\set{(a,at)\mid a\in S,t\in T}$. Any Cayley graph is a rooted directed graph, with root $1$; in other words, there is a path from $1$ to any $a\in S$.

While a Cayley graph $\Cay(S,T)$ of a finitely generated semigroup $S$ need not be vertex-transitive, it still carries some symmetries. Indeed, the outdegrees of all vertices is the same, and equals the number of generators $\left|T\right|$. Furthermore, $S$ acts on $\Cay(S,T)$ via multiplication from the left on the vertices. This resembles the action of a group on its Cayley graphs by translations, however here distinct vertices may be mapped to the same vertex. Note, however, that the indegrees of the vertices need not be even finite, and the Cayley graph may contain loops and double edges.

The ideal structure of $S$ can also be read from its Cayley graphs. Right ideals of~$S$ correspond to closed subsets in $\Cay(S,T)$, i.e., sets of vertices with no outgoing edge from the set to its complement. Principal right ideals of $S$ are the sets obtained by taking the minimal strongly connected subsets containing a given vertex. Therefore, strongly connected components of the Cayley graph are principal right ideals (while the reverse claim is not true in general). Left ideals of $S$ correspond to sets of vertices which are closed under the action of $S$ by translations.

\subsection{Random walks}

Let $(G,o)$ be a rooted directed graph which is locally finite, i.e., all outdegrees are finite. A \textbf{simple random walk} on $(G,o)$ is a sequence of random variables $\set{R_n}_{n=0}^{\infty}$, such that $R_0=o$, and $R_n$ is chosen uniformly from the neighbors of $R_{n-1}$, independently between the different steps of the random walk.

We define the \textbf{speed} of a simple random walk $R_n$ on $(G,o)$ as the function $\ell_G(n)=\E[\dist(o,R_n)]$, where $\dist(o,v)$ denotes the distance of the vertex $v\in V$ from the root $o$. Note that $\dist(o,v)<\infty$ for all $v\in V$, hence $\ell_G(n)<\infty$ for all $n$.

Given a finitely generated semigroup $S$ with a finite generating set $T$, a \textbf{simple random walk} on $S$ (with respect to $T$) is a simple random walk on $\Cay(S,T)$. This can be rewritten as a sequence of random variables $R_n=X_1\cdots X_n$, where $X_1,X_2,\dots$ are i.i.d.\ random variables distributed uniformly on $T$. When clear from the context, we do not emphasize the generating set. The distance can then be interpreted as
\[
    \left|a\right|\coloneqq\dist(1,a)=\inf\set{k\ge 0\mid \exists t_1,\dots,t_k\in T:a=t_1\cdots t_k}
\]
for all $a\in S$.

\subsection{Presentations of semigroups}

Let $\Sigma = \{x_1,\dots,x_d\}$ be a finite alphabet. The free semigroup $\Sigma^*$ consists of all finite strings over the alphabet $\Sigma$.
Every finitely generated semigroup can be described as a quotient of the free semigroup on some finite alphabet, modulo a congruence\footnote{In universal algebra, a congruence on an algebraic structure $A$ is a substructure of $A\times A$ that is also an equivalence relation.}, which, in practice, will be given by a set of relations of the form $u=w$ where $u,w\in \Sigma^*$. For example, the semigroup $S=\left<x_1,x_2|x_1^2=x_1,x_2^2=x_2,x_1x_2x_1=x_2x_1x_2\right>$ is given as a quotient of the free semigroup generated by $x_1,x_2$ modulo the congruence generated by the relations $x_1^2=x_1,x_2^2=x_2,x_1x_2x_1=x_2x_1x_2$; that is, the largest semigroup generated by two elements satisfying these relations.

Endow the alphabet $\Sigma$ with some total order, say, $x_1<\cdots<x_d$. This induces a (lexicographic) total order on the free semigroup $\Sigma^*$ as follows: $u<w$ if either $|u|<|w|$ or if $u=u_1\cdots u_n,\ w=w_1\cdots w_n$ where $u_1,\dots,u_,w_1,\dots,w_n\in \Sigma$ and for the first $i$ such that $u_i\neq w_i$, we have $u_i<w_i$ with respect to the order on $\Sigma$.

Given a system of relations $u_1=w_1,u_2=w_2,\dots$ in the free semigroup $\Sigma^*$ with $u_1<w_1,u_2<w_2,\dots$, consider the \textbf{reduction procedure} $w_1\rightarrow u_1,w_2\rightarrow u_2,\dots$, which can be thought of as an algorithm replacing every element in $\Sigma^*$ containing some $w_i$ as a subword, say, $v=v_0w_iv_1$, by the same word with $w_i$ replaced by $u_i$, namely, $v_0u_iv_1$. This algorithm is not deterministic, as we can have multiple occurrences of one or more $w_i$'s in $v$; some of these occurrences can even overlap. This situation is called an \textbf{ambiguity}. We say that an ambiguity is \textbf{resolvable} if the two different reductions can be further reduced to yield the same irreducible word -- that is, a word that cannot be further reduced by any of the reduction rules.

Bergman's `Diamond Lemma' \cite{Bergman} asserts that, if all ambiguities are resolvable, then the elements of the associated quotient semigroup are in one to one bijection with all irreducible words in the free semigroup. It is worth noting that Bergman's Diamond Lemma in fact applies in a much more general setting, of reduction rules associated with defining relations of associative algebras; however, we focus here on the semigroup-theoretic version for conciseness.

For example, the defining relations of the semigroup $S$ above give the reduction rules $r_1\colon x_1^2\rightarrow x_1,r_2\colon x_2^2\rightarrow x_2,r_3\colon x_2x_1x_2\rightarrow x_1x_2x_1$ (indeed, putting $x_1<x_2$ we have that $x_1x_2x_1 < x_2x_1x_2$). There is an ambiguity in the word $x_1^3$, since the word $x_1^2$, which can be reduced by $r_1$, appears in it twice; however, both applications of $r_1$ yield $x_1^2$, which can be further reduced to the irreducible word $x_1$. A less trivial ambiguity is in the word $x_2^2x_1x_2$, which can be reduced to both $x_2x_1x_2$ (by $r_2$) and to $x_2x_1x_2x_1$ (by $r_3$). However, $x_2x_1x_2$ can be further reduced to the irreducible $x_1x_2x_1$ by $r_3$, and $x_2x_1x_2x_1$ can be further reduced to $x_1x_2x_1^2$ by $r_3$ and then to $x_1x_2x_1$ by $r_1$. This shows that this ambiguity is resolvable; see below.

\begin{center}
\begin{tikzcd}
                              & \substack{(x_2x_2)x_1x_2=\\x_2(x_2x_1x_2)} \arrow[ld, "r_2"'] \arrow[rd, "r_3"] &                                \\
x_2x_1x_2 \arrow[rdd, "r_3"'] &                                                                                 & x_2x_1x_2x_1 \arrow[d, "r_3"]  \\
                              &                                                                                 & x_1x_2x_1x_1 \arrow[ld, "r_1"] \\
                              & x_1x_2x_1                                                                       &
\end{tikzcd}
\end{center}
Using Bergman's Diamond Lemma, it can be shown that $S=\{x_1,x_2,x_1x_2,x_2x_1,x_1x_2x_1\}$.

\section{Construction}\label{sec3}

\begin{cons}\label{cons:Sm}
    Fix an increasing sequence $\vec{m}=\set{m_i}_{i=1}^{\infty}$ of positive integers such that $m_1=1$. For any two positive integers $j,j'\ge 1$, write $j'\prec j$ if there is some $k\ge 1$ such that $j'<m_k\le j$, and write $j'\preceq j$ if $j'\prec j$ or there is some $k\ge 1$ such that $m_k\le j,j'<m_{k+1}$. Define the semigroup
    \[
        \Sm = \sg{x,y \mid x^2=x, \forall i,j\ge 1: xy^jxy^{j'}x=xy^jx \text{ if } j'\prec j}.
    \]
\end{cons}

We think of the defining relations of $\Sm$ as follows. We partition the set of positive integers into equivalence classes $[m_1,m_2),[m_2,m_3),\dots$.
Now $x$ is an idempotent, and $xy^jx$ `absorbs' $xy^{j'}x$ if $j' \prec j$.

\begin{prop}\label{prop:reduce}
    Every element of $a\in\Sm$ can be uniquely written as
    \begin{equation}\label{eq:reduced-form}
        a = y^{j_0}xy^{j_1}x\cdots y^{j_{t-1}}xy^{j_t},
    \end{equation}
    where $j_0,j_t\ge 0$, $j_1,\dots,j_{t-1}\ge 1$ and $j_1\preceq j_2\preceq\cdots\preceq j_{t-1}$.
\end{prop}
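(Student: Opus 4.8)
The plan is to apply Bergman's Diamond Lemma to the reduction system associated with the defining relations of $\Sm$, and then to check that the words of the form \eqref{eq:reduced-form} are exactly the irreducible ones. First I would set up the rewriting system: with the order $x<y$ (so any word with a subword $x^2$, or a subword $xy^jxy^{j'}x$ with $j'\prec j$, is strictly larger than its reduced version), the reduction rules are $r_0\colon x^2\to x$ and, for each pair $j'\prec j$, $r_{j,j'}\colon xy^jxy^{j'}x\to xy^jx$. A word is irreducible for this system precisely when it contains no $x^2$ and no factor $xy^jxy^{j'}x$ with $j'\prec j$; writing a general $x^2$-free word as $y^{j_0}xy^{j_1}x\cdots xy^{j_t}$ with $j_0,j_t\ge 0$ and $j_1,\dots,j_{t-1}\ge 1$, irreducibility translates exactly into: for no consecutive pair is $j_{i+1}\prec j_i$, i.e. $j_i\preceq j_{i+1}$ for all $1\le i\le t-2$ — wait, one must be careful that $\prec$ is not symmetric, so I would verify that ``not ($j_{i+1}\prec j_i$)'' together with the definition of $\preceq$ gives $j_i\preceq j_{i+1}$; this is where the bookkeeping of the partition into blocks $[m_k,m_{k+1})$ matters, and I would spell out that $j\preceq j'$ holds iff $j$ and $j'$ lie in the same block or $j$ lies in an earlier block than $j'$, so that $\preceq$ is a total preorder and ``$j_{i+1}\not\prec j_i$'' is equivalent to ``$j_i\preceq j_{i+1}$''. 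Hence the irreducible words are exactly those in \eqref{eq:reduced-form}, and the uniqueness claim will follow once the Diamond Lemma applies.

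The substance is checking that all ambiguities are resolvable. I would enumerate the overlap ambiguities (inclusion ambiguities do not occur since no left side is a factor of another — the $r_{j,j'}$ left sides all have the same ``$x\cdots x\cdots x$'' shape and $x^2$ is not a factor of any $xy^jxy^{j'}x$ because $j,j'\ge 1$). The overlaps to consider are: (i) $x^2$ with $x^2$ inside $x^3$ — both reductions give $x^2\to x$, trivially resolvable; (ii) $x^2$ overlapping an $r_{j,j'}$ left side $xy^jxy^{j'}x$ at one of its three $x$'s — the only way $x^2$ can overlap is at the first or last $x$ (since the middle $x$ is flanked by $y$'s on both sides), giving words $x^2y^jxy^{j'}x$ and $xy^jxy^{j'}x^2$; (iii) two rules $r_{j,j'}$ and $r_{k,k'}$ overlapping, which can only happen when the trailing ``$xy^{j'}x$'' of the first coincides with the leading ``$xy^kx$'' of the second, forcing $j'=k$, on the word $xy^jxy^{j'}xy^{k'}x$ with $j'\prec j$ and $k'\prec j'$. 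For (ii): from $x^2y^jxy^{j'}x$, reducing $x^2$ first gives $xy^jxy^{j'}x\to xy^jx$; applying $r_{j,j'}$ first gives $x^2y^jx\to xy^jx$ — same result; the $x^2$-on-the-right case is symmetric. For (iii), the key algebraic point is transitivity-like behavior of $\prec$: from $xy^jxy^{j'}xy^{k'}x$, one branch applies $r_{j,j'}$ to the prefix, yielding $xy^jxy^{k'}x$; since $k'\prec j'$ and $j'<m_k\le j$ for the relevant $k$ — more precisely $j'\prec j$ means $\exists\,\ell$ with $j'<m_\ell\le j$, and $k'\prec j'$ means $\exists\,\ell'$ with $k'<m_{\ell'}\le j'<m_\ell$, so $k'<m_\ell\le j$, hence $k'\prec j$ — this reduces by $r_{j,k'}$ to $xy^jx$. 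The other branch applies $r_{j',k'}$ to the suffix, yielding $xy^jxy^{j'}x$, which reduces by $r_{j,j'}$ to $xy^jx$. Both branches land on $xy^jx$, which is irreducible, so the ambiguity is resolved.

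I expect step (iii) — the verification that the ``absorption'' relation is transitive in the precise sense needed — to be the main obstacle, though it is really just a careful unwinding of the definition of $\prec$ in terms of the sequence $\vec m$; the one subtlety is making sure the right-hand sides $xy^jx$ in the relations don't themselves contain a reducible factor (they don't, since $j\ge 1$ and there are only two $x$'s). Once all ambiguities are checked to be resolvable, Bergman's Diamond Lemma gives a bijection between $\Sm$ and the set of irreducible words, which we identified with the normal forms \eqref{eq:reduced-form}; this simultaneously yields existence and uniqueness of the stated expression.
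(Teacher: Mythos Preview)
Your proof is correct and follows exactly the paper's approach: set up the rewriting system $x^2\to x$, $xy^jxy^{j'}x\to xy^jx$ (for $j'\prec j$) and resolve the overlap ambiguities so that Bergman's Diamond Lemma applies. If anything, you are more explicit than the paper in identifying the irreducible words with the normal forms \eqref{eq:reduced-form} and in spelling out the transitivity of $\prec$ that underlies the resolution of case (iii).
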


We call \eqref{eq:reduced-form} the \textbf{reduced form} of $a$.

\begin{proof}
    We apply Bergman's diamond lemma to the defining presentation of $\mathcal{S}_{\vec{m}}$ with respect to the ordered alphabet $\{x,y\},\ x<y$, along with the reduction rules:
    \begin{eqnarray*}
       r \colon&&  x^2  \longrightarrow   x, \\
      s_{j,j'} \colon &&  x y^j x y^{j'} x  \longrightarrow  x y^j x  \end{eqnarray*}
      whenever $j' \prec j$.
Let us check that all ambiguities arising from overlapping reductions are resolvable.

\begin{center}
\begin{tikzcd}
                    & (xx)x=x(xx) \arrow[ld, "r"'] \arrow[rd, "r"] &                    \\
xx \arrow[rd, "r"'] &                                              & xx \arrow[ld, "r"] \\
                    & x                                            &
\end{tikzcd}
\end{center}

\begin{center}
\begin{tikzcd}
                                       & \substack{(xx)y^jxy^{j'}x \\ =x(xy^jxy^{j'}x)} \arrow[ld, "r"'] \arrow[rd, "{s_{j,j'}}"] &                        \\
xy^jxy^{j'}x \arrow[rd, "{s_{j,j'}}"'] &                                                                                          & xxy^jx \arrow[ld, "r"] \\
                                       & xy^j x                                                                                   &
\end{tikzcd}
\end{center}

\begin{center}
\begin{tikzcd}
                                         & \substack{(xy^jxy^{j'}x)y^{j''}x \\ =xy^j(xy^{j'}xy^{j''}x)} \arrow[ld, "{s_{j,j'}}"'] \arrow[rd, "{s_{j',j''}}"] &                                       \\
xy^jxy^{j''}x \arrow[rd, "{s_{j,j''}}"'] &                                                                                                                   & xy^jxy^{j'}x \arrow[ld, "{s_{j,j'}}"] \\
                                         & xy^j x                                                                                                            &
\end{tikzcd}
\end{center}

\end{proof}

\begin{figure}[h]    \centering    \includegraphics[width=0.5\linewidth]{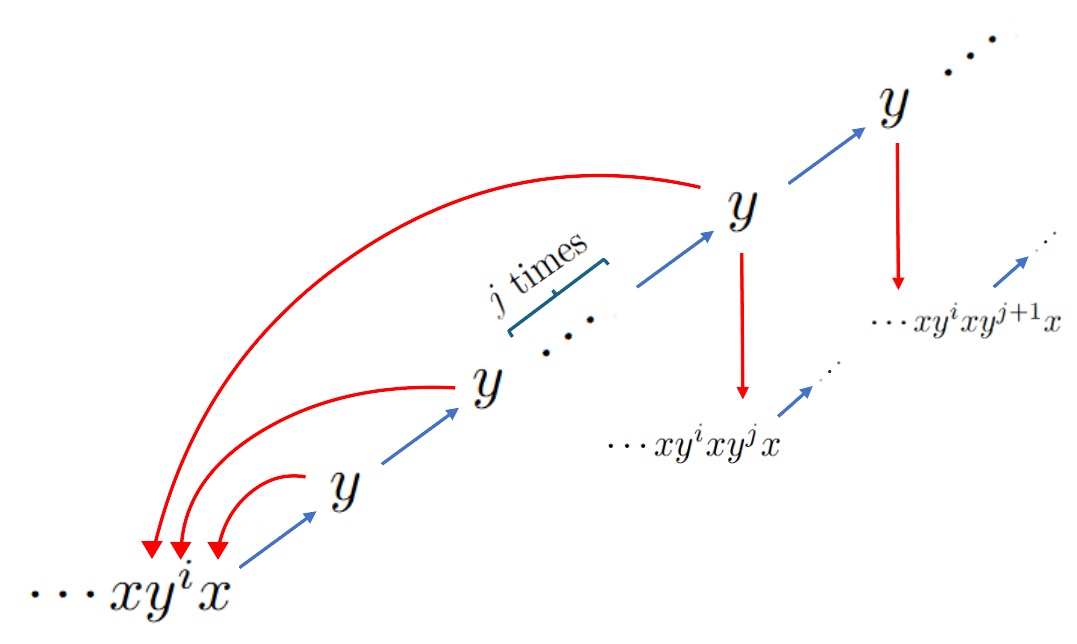}    \caption{A finite part of the Cayley graph of $\Sm$ (here $j\succeq i$). Red arrows correspond to right multiplication by $x$ and blue arrows correspond to right multiplication by $y$. Here $j\succeq i$.}    \label{fig:Cayley_Sm}\end{figure}

\begin{rem}\label{rem:diamond-alg}
    Bergman's diamond lemma also details how to find the reduced form of a given element $a$. First write $a$ as a product of generators. Begin by replacing any occurrence of $x^2$ by $x$. Then replace any occurrence of $xy^jxy^{j'}x$ by $xy^jx$ if $j'\prec j$. Once no more reductions of these forms can be made, the result is the reduced form of $a$.
\end{rem}

\begin{cor}
    In the notations of \Pref{prop:reduce},
    \[
        \left|a\right| = t + \sum_{i=0}^t j_t.
    \]
\end{cor}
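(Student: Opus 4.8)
The plan is to read off the word length directly from the reduced form provided by \Pref{prop:reduce}, using two observations: first, that the reduced form of $a$ exhibits $a$ as a product of exactly $t+\sum_{i=0}^t j_i$ generators, so $|a|\le t+\sum_{i=0}^t j_i$; and second, that no relation in the presentation can ever shorten a word, so in fact every expression of $a$ as a product of generators has length at least $t+\sum_{i=0}^t j_i$. Assuming this works, the corollary follows immediately (the statement should read $\sum_{i=0}^t j_i$, not $\sum_{i=0}^t j_t$).

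For the upper bound, I would simply note that the word $y^{j_0}xy^{j_1}x\cdots y^{j_{t-1}}xy^{j_t}$ is literally a product of $t$ copies of $x$ and $j_0+j_1+\cdots+j_t$ copies of $y$, hence $|a|\le t+\sum_{i=0}^t j_i$. For the lower bound, the key point is that both families of reduction rules, $r\colon x^2\to x$ and $s_{j,j'}\colon xy^jxy^{j'}x\to xy^jx$, strictly decrease the length of a word: $r$ drops the length by $1$, and $s_{j,j'}$ drops it by $j'+1\ge 2$. Since, by \Rref{rem:diamond-alg}, any product of generators equal to $a$ can be transformed into the reduced form of $a$ by a finite sequence of such reductions — each of which only decreases length — the length of any such product is at least the length $t+\sum_{i=0}^t j_i$ of the reduced form. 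Therefore $|a|= t+\sum_{i=0}^t j_i$.

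I would phrase the lower bound slightly more carefully to avoid circularity: rather than invoking the confluence of the rewriting system to say "every word reduces to the reduced form," I can argue directly that \emph{applying any single rule to any word weakly decreases its image under the length-counting homomorphism}, and then note that the reduced form is reachable from any representative word by finitely many rule applications (this is exactly the content of the reduction procedure terminating, which follows from the Diamond Lemma setup, since each application strictly decreases length and hence the procedure terminates after finitely many steps, landing on the unique irreducible word, which by \Pref{prop:reduce} is the reduced form). The only mild subtlety — and the one place a reader might want a sentence of justification — is confirming that the relations as written, $x^2=x$ and $xy^jxy^{j'}x=xy^jx$, really do always have the longer side on the left, so that the rewriting is length-nonincreasing in the intended direction; this is immediate by inspection since $|x^2|=2>1=|x|$ and $|xy^jxy^{j'}x| = j+j'+3 > j+2 = |xy^jx|$. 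I do not anticipate a genuine obstacle here; the corollary is essentially a bookkeeping consequence of \Pref{prop:reduce} together with the length-monotonicity of the rewriting system.
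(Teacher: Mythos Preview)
Your argument is correct, and it is exactly the natural one: the reduced form gives the upper bound by inspection, and the lower bound follows because every rewriting rule in the system (both $r$ and each $s_{j,j'}$) strictly shortens words, so the unique normal form has minimal length among all representatives. The paper does not actually supply a proof of this corollary --- it is stated immediately after \Pref{prop:reduce} and \Rref{rem:diamond-alg} without a proof environment, evidently regarded as an immediate consequence --- so there is nothing to compare against; your write-up is precisely the justification the paper elides. You are also right to flag the typo: the summand should be $j_i$, not $j_t$.
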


\section{Speed estimates}\label{sec4}

Let $R_n=X_1\cdots X_n$ be a simple random walk on~$\Sm$ with respect to the generating set $\set{x,y}$. In this section, we provide estimates for the speed $\E\left|R_n\right|$ of $R_n$. For convenience, we write the reduced form of $R_n$ as
\begin{equation}\label{eq:reduced-form-Rn}
    R_n = y^{N_0(R_n)}\left(\prod_{k=1}^{\infty}\pi_k(R_n)\right)x^{\delta(R_n)}y^{N_{\infty}(R_n)},
\end{equation}
where $\pi_k(R_n)=xy^{j_{k,1}}\cdots xy^{j_{k,N_k(R_n)}}$ is an element such that
\[
    m_k\le j_{k,1},\dots,j_{k,N_k(R_n)}<m_{k+1},
\]
or $\pi_k(R_n)=1$ if no subword of the form $xy^jx$ for $m_k\le j<m_{k+1}$ appears in the reduced form of $R_n$. Also, $\delta(R_n)$ is the indicator of whether there is some $k\in\N$ such that $N_k(R_n)>0$. Under these notations,
\begin{equation}\label{eq:length-Rn}
    \left|R_n\right| = \sum_{k=1}^{\infty}\left|\pi_k(R_n)\right| + N_0(R_n) + N_{\infty}(R_n) + \delta(R_n).
\end{equation}

\begin{rem}\label{rem:R-infty}
    It will be useful to extend $R_n$ to an infinite product, and consider $R_{\infty}=X_1X_2\cdots$ as an infinite word in $x,y$. We note that the reduced form of $R_{\infty}$ is almost surely a well-defined infinite word. Indeed, we may write $R_{\infty}=Y_1Y_2\cdots$, where $Y_1,Y_2,\dots$ are i.i.d.\ random variables with distribution $\Pr(Y_1=y^jx)=\frac{1}{2^{j+1}}$ for all $j=0,1,\dots$. The reduced form of $R_{\infty}$ is then given by replacing each occurrence of $x^2$ by $x$, and removing $Y_t$ from the product if there appears $Y_i$ which absorbs it. Since almost surely the length of $Y_1,Y_2,\dots$ will not be bounded, the prefixes of the infinite word $Y_1Y_2\cdots$ after applying these reductions stabilize, and thus the reduced form of $R_{\infty}$ is well-defined.

    We further note that, if the reduced form of $R_{\infty}$ exists, then $\pi_k(R_n)$ is a subword of $\pi_k(R_{\infty})$ for all $k\in\N\cup\set{0}$, since they are achieved by applying the same reduction steps.
\end{rem}

To establish an upper bound for the speed of $R_n$, we begin with estimating its prefix and suffix of $y$'s, i.e., $N_0(R_n)$ and $N_{\infty}(R_n)$.

\begin{prop}\label{prop:N-0-infty}
    For $k\in\set{0,\infty}$ we have $\E[N_k(R_n)]\le 1$.
\end{prop}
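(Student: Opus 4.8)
The plan is to recognize that $N_0(R_n)$ and $N_\infty(R_n)$ are nothing more than the lengths of the maximal $y$-prefix and $y$-suffix of the \emph{unreduced} word $X_1\cdots X_n$, and then to bound the expectation of this elementary geometric-type quantity directly.

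The first step is to observe that the reduction procedure of \Pref{prop:reduce} leaves the maximal leading block of $y$'s and the maximal trailing block of $y$'s of any word untouched. The key point is that both left-hand sides of the reduction rules, namely $x^2$ and $xy^jxy^{j'}x$, begin and end with the letter $x$. Hence no rewritten subword can overlap the maximal $y$-prefix $y^L$ of a word (such a subword would contain a letter of $y^L$ immediately preceded by an $x$, which is impossible, as each letter of $y^L$ is either the first letter or is preceded only by $y$), and symmetrically for the maximal $y$-suffix; and since reductions only delete letters, no new $y$'s can be prepended or appended. Applying this to $R_n=X_1\cdots X_n$ and reading off \eqref{eq:reduced-form-Rn}, we conclude that $N_0(R_n)$ equals the largest $i$ with $X_1=\cdots=X_i=y$ and $N_\infty(R_n)$ equals the largest $i$ with $X_{n-i+1}=\cdots=X_n=y$ (with the obvious conventions when the whole word is a power of $y$ or has no leading, resp.\ trailing, $y$).

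The second step is the computation. Since $X_1,\dots,X_n$ are i.i.d.\ uniform on $\set{x,y}$, we have $\Pr(N_0(R_n)\ge r)=\Pr(X_1=\cdots=X_r=y)=2^{-r}$ for $1\le r\le n$ and $0$ for $r>n$; by the reversal symmetry of the i.i.d.\ sequence, $N_\infty(R_n)$ has the same law. The tail-sum formula then yields $\E[N_k(R_n)]=\sum_{r=1}^{n}2^{-r}=1-2^{-n}\le 1$ for $k\in\set{0,\infty}$. (Alternatively, for $k=0$ one may simply dominate $N_0(R_n)$ by the maximal $y$-prefix of the infinite word $R_\infty$ of \Rref{rem:R-infty}, whose law is $\Pr(\,\cdot=j\,)=2^{-(j+1)}$, of mean $1$.)

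The only point that requires care is the first step: confirming that the leading and trailing $y$-blocks are genuinely inert under the non-deterministic reduction procedure, and in particular that the first and the last occurrence of $x$ in the word are never deleted. An occurrence of $x$ is deleted only via an application of $s_{j,j'}$, which removes the \emph{interior} $x$ of a pattern $xy^jxy^{j'}x$; such an interior $x$ always has another $x$ to its left and another to its right, so the leftmost $x$ of the word can never play this role, and likewise for the rightmost $x$. This is the crux; everything after it is a one-line estimate.
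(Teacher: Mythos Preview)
Your proof is correct and follows essentially the same approach as the paper's: both recognise that $N_0(R_n)$ is the length of the maximal $y$-prefix of the unreduced word $X_1\cdots X_n$, and then bound this by (or compute it as) a geometric quantity with mean at most~$1$. You give a more explicit justification for the inertness of the $y$-prefix under reduction, whereas the paper asserts this in one line; the probabilistic step is identical in spirit.

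One small imprecision: in your final paragraph you write that ``an occurrence of $x$ is deleted only via an application of $s_{j,j'}$'', but the rule $r\colon x^2\to x$ also deletes an $x$. This does not affect your argument, since your earlier observation---that both left-hand sides begin and end with $x$---already suffices to show that no reduction can touch the leading $y$-block or move the position of the first $x$ (applying $r$ to a leading $xx$ still leaves an $x$ immediately after $y^L$). The final paragraph is therefore redundant with your correct first-step argument, and you may simply drop it.
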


\begin{proof}
    Indeed, $N_0(R_n)$ counts the number of $y$'s that appear in $R_n$ before the first time an $x$ appears in $R_n$. This is bounded by a geometric random variable with parameter $\frac{1}{2}$ supported on $\set{0,1,\dots}$, and thus  $\E[N_0(R_n)]\le 1$. The calculation for $N_{\infty}(R_n)$ is the same in reverse.
\end{proof}

The next two propositions consider $\E\left|\pi_k(R_n)\right|$ for $k\in\N$. \Pref{prop:up-bound-pi-k} is tight for small values of $k$, when we expect $\pi_k(R_n)$ to appear entirely as a subword of $\pi_k(R_{\infty})$ (and, likely, both will be equal). In this case we estimate $\E\left|\pi_k(R_n)\right|$ by estimating $\E\left|\pi_k(R_{\infty})\right|$. On the contrary, \Pref{prop:up-bound-pi-k-inds} is suitable for large values of $k$, when there is a high probability that no word of the form $xy^jx$ for $m_k\le j<m_{k+1}$ appears in $R_n$.

\begin{prop}\label{prop:up-bound-pi-k}
    For any $k\in\N$ we have $\E\left|\pi_k(R_{\infty})\right| \le (m_k+2)2^{m_{k+1}-m_k}$.
\end{prop}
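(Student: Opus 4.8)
The plan is to pass to the infinite word $R_\infty = Y_1 Y_2 \cdots$ from \Rref{rem:R-infty}, where the $Y_t$ are i.i.d.\ with $\Pr(Y_t = y^j x) = 2^{-(j+1)}$, and to pin down exactly which factors $Y_t$ contribute a block $xy^{j_t}$ with $m_k \le j_t < m_{k+1}$ to $\pi_k(R_\infty)$. Write $j_t$ for the exponent of $Y_t$, call $j_t$ a \emph{level-$k$} exponent when $m_k \le j_t < m_{k+1}$, and set $T = \min\{t \ge 2 : j_t \ge m_{k+1}\}$; since $\Pr(T > m) = (1 - 2^{-m_{k+1}})^{m-1}\to 0$, the index $T$ is finite almost surely. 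I would first record the structure of the reduced form of $R_\infty$: it is well-defined by \Rref{rem:R-infty} and, being irreducible, has (as in \Pref{prop:reduce}) the shape $y^{a_0}xy^{a_1}xy^{a_2}x\cdots$ with $a_1 \preceq a_2 \preceq \cdots$ and all $a_i \ge 1$ for $i \ge 1$. Since the reductions $r$ and $s_{j,j'}$ only delete material, never reorder it, the surviving blocks occur in the same left-to-right order as their source factors, $a_0 = j_1$ is the prefix of $y$'s, and hence each block $xy^{a_i}$ with $i\ge 1$ carries an exponent $a_i = j_t$ for some $t \ge 2$, with $t$ strictly increasing in $i$.

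The key claim is that every level-$k$ block of $R_\infty$ originates from a factor $Y_t$ with $2 \le t < T$, and each such $Y_t$ supplies at most one block. For this, observe that the factors $Y_2,\dots,Y_{T-1}$ all have exponent $< m_{k+1}$, whereas $j_T \ge m_{k+1}$; since $s_{j,j'}$ only ever deletes the block of the \emph{strictly lower} level of an adjacent pair, and $r$ only deletes a repeated $x$, no factor preceding $Y_T$ can delete $Y_T$'s block, so $Y_T$ survives into the reduced form with an exponent of level $> k$. Because the levels $a_1 \preceq a_2 \preceq \cdots$ are nondecreasing and the blocks keep their order, every block at or after the $Y_T$-block has level $> k$; hence all level-$k$ blocks lie strictly before it, i.e.\ they come from factors $Y_t$ with $2 \le t < T$. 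As $Y_t = y^{j_t}x$ contributes the $y$-part of at most one block (of length $1+j_t$), this gives
\[
    \left|\pi_k(R_\infty)\right| \;\le\; \sum_{t=2}^{T-1} (1 + j_t)\,\ind{m_k \le j_t < m_{k+1}}.
\]

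Finally I would take expectations. The event $\{t < T\} = \{j_2 < m_{k+1},\dots,j_{t-1} < m_{k+1}\}$ depends only on $j_2,\dots,j_{t-1}$, hence is independent of $j_t$, so with $A_k \coloneqq \E\big[(1+j_t)\ind{m_k \le j_t < m_{k+1}}\big] = \sum_{j=m_k}^{m_{k+1}-1}(j+1)2^{-(j+1)}$ we obtain
\[
    \E\left|\pi_k(R_\infty)\right| \;\le\; \sum_{t=2}^{\infty} \Pr(t < T)\cdot A_k \;=\; A_k\sum_{t=2}^{\infty}\bigl(1 - 2^{-m_{k+1}}\bigr)^{t-2} \;=\; 2^{m_{k+1}} A_k.
\]
Using $\sum_{l \ge a} l\,2^{-l} = (a+1)2^{1-a}$ to evaluate $A_k = \sum_{l=m_k+1}^{m_{k+1}} l\,2^{-l} = (m_k+2)2^{-m_k} - (m_{k+1}+2)2^{-m_{k+1}}$, this yields $\E\left|\pi_k(R_\infty)\right| \le (m_k+2)2^{m_{k+1}-m_k} - (m_{k+1}+2) \le (m_k+2)2^{m_{k+1}-m_k}$, as desired.

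The main obstacle is the structural claim of the second paragraph: one must be careful that cascades of reductions in the infinite word cannot carry a level-$k$ block past the first high-level factor $Y_T$, and that the order and multiplicity of the surviving blocks are exactly as described (this is where the monotonicity of levels in the reduced form and the "only deletes the strictly lower one" shape of $s_{j,j'}$ do the work). Granting this, what remains is the short independence argument and the geometric-plus-finite summation above.
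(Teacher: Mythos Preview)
Your approach is the same as the paper's: decompose $R_\infty = Y_1Y_2\cdots$ with $Y_t = y^{j_t}x$, stop at the first factor whose exponent is $\ge m_{k+1}$, and sum the level-$k$ block lengths before that time; your structural justification (separating $Y_1$ as the prefix, and arguing that $Y_T$'s block survives so that by monotonicity of levels all level-$k$ blocks must precede it) is in fact more carefully argued than the paper's bare equality.

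One small slip to fix: the event $\{t<T\}=\{T\ge t+1\}$ equals $\{j_2<m_{k+1},\dots,j_t<m_{k+1}\}$, not $\{j_2<m_{k+1},\dots,j_{t-1}<m_{k+1}\}$, so it \emph{does} depend on $j_t$ and is not independent of it. This does not damage the computation, because your indicator $\ind{m_k\le j_t<m_{k+1}}$ already forces $j_t<m_{k+1}$, so
\[
\ind{t<T}\,\ind{m_k\le j_t<m_{k+1}}=\ind{j_2,\dots,j_{t-1}<m_{k+1}}\,\ind{m_k\le j_t<m_{k+1}},
\]
and the two factors on the right are independent; hence your factorization and the final value $2^{m_{k+1}}A_k$ are correct as written.
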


\begin{proof}
    Write $R_{\infty}=Y_1Y_2\cdots$ as in \Rref{rem:R-infty}, and define the random variable $\tau=\inf\set{i\ge 1\mid \left|Y_i\right|\ge m_{k+1}+1}$. Then
    \[
        \left|\pi_k(R_{\infty})\right| = \sum_{i=1}^{\tau-1}\left|Y_i\right|\ind{\set{\left|Y_i\right|\ge m_k+1}}.
    \]

    Since $Y_1,Y_2,\dots$ are independent and $\Pr(\left|Y_i\right|\ge m_{k+1}+1)=2^{-m_{k+1}}$ for all $i$, the random variable $\tau$ is a geometric random variable with parameter $2^{-m_{k+1}}$. Furthermore, note that $\set{\tau=t}=\bigcap_{i=1}^{t-1}\set{\left|Y_i\right|\le m_{k+1}}\cap\set{\left|Y_t\right|\ge m_{k+1}+1}$, so the random variables $Y_1,\dots,Y_m$ are independent and identically distributed also after conditioning by $\set{\tau=m}$. Therefore,
    \begin{align*}
        \E\left|\pi_k(R_{\infty})\right| &= \E\left[\sum_{i=1}^{\tau-1}\left|Y_i\right|\ind{\set{\left|Y_i\right|\ge m_k+1}}\right] \\
        & = \sum_{t=1}^{\infty}\E\left[\sum_{i=1}^{t-1}\left|Y_i\right|\ind{\set{\left|Y_i\right|\ge m_k+1}}\mid \tau=t\right]\Pr(\tau=t) \\
        & = \sum_{t=1}^{\infty} (t-1)\E\left[\left|Y_1\right|\ind{\set{\left|Y_1\right|\ge m_k+1}}\mid \left|Y_1\right|\le m_{k+1}\right]\Pr(\tau=t) \\
        & = \E\left[\left|Y_1\right|\ind{\set{\left|Y_1\right|\ge m_k+1}}\mid \left|Y_1\right|\le m_{k+1}\right] (\E[\tau]-1) \\
        & = \E\left[\left|Y_1\right|\mid m_k+1\le \left|Y_1\right|\le m_{k+1}\right]\Pr\left(\left|Y_1\right|\ge m_k+1\right)(\E[\tau]-1) \\
        & = \E\left[\left|Y_1\right|\mid m_k+1\le \left|Y_1\right|\le m_{k+1}\right]\Pr\left(\left|Y_1\right|\ge m_k+1\right)(2^{m_{k+1}}-1).
    \end{align*}
    Next, note that $\left|Y_1\right|$ has a geometric distribution with parameter $\frac{1}{2}$, and thus $\Pr\left(\left|Y_1\right|\ge m_k+1\right)=2^{-m_k}$ and
    \begin{align*}
        \E\left[\left|Y_1\right|\mid m_k+1\le \left|Y_1\right|\le m_{k+1}\right] &= \sum_{i=m_k+1}^{m_{k+1}}i\,\Pr(\left|Y_1\right|=i\mid m_k+1\le \left|Y_1\right|\le m_{k+1}) \\
        & = \frac{1}{2^{-m_k}-2^{-m_{k+1}}}\sum_{i=m_k+1}^{m_{k+1}}i 2^{-i-1} \\
        & \le 2^{m_k+1}\sum_{i=m_k+1}^{\infty}i 2^{-i-1} = m_k+2.
    \end{align*}
    Combining everything, we have
    \[
        \E\left|\pi_k(R_{\infty})\right| \le (m_k+2)2^{m_{k+1}-m_k},
    \]
    as required.
\end{proof}

\begin{prop}\label{prop:up-bound-pi-k-inds}
    For any $k\in\N$ we have $\E\left|\pi_k(R_n)\right| \le \frac{m_k+2}{2^{m_k+1}}n$.
\end{prop}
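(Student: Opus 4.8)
The plan is to bound $\left|\pi_k(R_n)\right|$ deterministically in terms of the word $R_n=X_1\cdots X_n$ and then take a first-moment estimate. The key deterministic observation is that the reduction procedure producing the reduced form of $R_n$ (recall \Rref{rem:diamond-alg}: iterate $r\colon x^2\to x$ and $s_{j,j'}\colon xy^jxy^{j'}x\to xy^jx$) only ever \emph{erases} letters --- $r$ coalesces two adjacent $x$'s, and $s_{j,j'}$ deletes the inner factor $y^{j'}x$ --- and in particular never creates a new letter and never splits a nonempty run of $y$'s. Hence each maximal run of $y$'s occurring in the reduced form of $R_n$ occurs verbatim as a maximal run of $y$'s in $R_n$, and distinct runs of the reduced form come from distinct runs of $R_n$. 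Now every factor $xy^{j_{k,\ell}}$ constituting $\pi_k(R_n)$ has $m_k\le j_{k,\ell}<m_{k+1}$, so in particular $j_{k,\ell}\ge m_1=1$; moreover in the reduced form it is immediately followed by a further $x$ (either the next factor of $\prod_{k=1}^{\infty}\pi_k(R_n)$, or the trailing $x^{\delta(R_n)}=x$). Thus $y^{j_{k,\ell}}$ corresponds to a maximal $y$-run of $R_n$ of length $j_{k,\ell}\ge m_k$ that is followed by an occurrence of $x$, and distinct factors correspond to distinct such runs. Consequently
\[
    \left|\pi_k(R_n)\right| = \sum_{\ell=1}^{N_k(R_n)}\bigl(1+j_{k,\ell}\bigr) \;\le\; \sum_{\substack{w\ \text{a maximal }y\text{-run of }R_n\\ |w|\ge m_k,\ w\text{ followed by an }x}}\bigl(1+|w|\bigr).
\]

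Next I reindex each such run $w$ by the position $p\in\set{1,\dots,n}$ of the occurrence of $x$ immediately following it. Writing $L_p=\max\set{\ell\ge 0 : X_{p-1}=\cdots=X_{p-\ell}=y}$ for the length of the maximal run of $y$'s ending at position $p-1$, the assignment $w\mapsto p$ is a bijection onto $\set{p : X_p=x,\ L_p\ge m_k}$ with $|w|=L_p$, so the bound above rewrites as
\[
    \left|\pi_k(R_n)\right| \;\le\; \sum_{p=1}^{n}\ind{\set{X_p=x}}\,(1+L_p)\,\ind{\set{L_p\ge m_k}}.
\]
Taking expectations term by term: for each $p$ the indicator $\ind{\set{X_p=x}}$ depends only on $X_p$ whereas $L_p$ is $\sigma(X_1,\dots,X_{p-1})$-measurable, so they are independent and $\Pr(X_p=x)=\tfrac12$. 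Furthermore $L_p\le\hat L$ pointwise, where $\hat L$ is the length of the run of $y$'s preceding position $p$ when the letters to its left are allowed to continue indefinitely as an i.i.d.\ sequence, so that $\hat L$ is geometric with $\Pr(\hat L=\ell)=2^{-\ell-1}$; since $\ell\mapsto(1+\ell)\ind{\set{\ell\ge m_k}}$ is nondecreasing,
\[
    \E\bigl[(1+L_p)\ind{\set{L_p\ge m_k}}\bigr]\le\E\bigl[(1+\hat L)\ind{\set{\hat L\ge m_k}}\bigr]=\sum_{\ell=m_k}^{\infty}(1+\ell)2^{-\ell-1}=\sum_{j=m_k+1}^{\infty}j\,2^{-j}=(m_k+2)2^{-m_k}.
\]
Hence each of the $n$ summands has expectation at most $\tfrac12(m_k+2)2^{-m_k}=\dfrac{m_k+2}{2^{m_k+1}}$, and summing them yields $\E\left|\pi_k(R_n)\right|\le\dfrac{m_k+2}{2^{m_k+1}}\,n$, as required.

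The main obstacle is the first paragraph: justifying that the factors of $\pi_k(R_n)$ are in bijective correspondence with genuine, distinct maximal $y$-runs of the original word $R_n$. This rests on the rigidity of the reduction rules --- they only delete letters, and $x^2\to x$ only merges adjacent $x$'s, so they can neither fabricate a factor $xy^jx$ with $j\ge 1$ nor break up a run of $y$'s --- together with the fact that every exponent occurring in $\pi_k$ is at least $m_1=1$. Once this deterministic inequality is established, the remainder is a routine first-moment computation, the only mild subtlety being the truncation in $L_p$, which is handled by the domination $L_p\le\hat L$.
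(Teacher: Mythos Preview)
Your proof is correct and follows essentially the same approach as the paper's: both bound $|\pi_k(R_n)|$ by summing, over the positions of $x$'s in the unreduced word $R_n$, the length $1+j$ of the adjacent $y$-run whenever $j\ge m_k$, and then take expectations using the geometric distribution of run lengths. The only cosmetic differences are that you index by the $x$ \emph{following} the run (looking backward at $L_p$, then dominating by a geometric $\hat L$) and drop the upper cutoff $<m_{k+1}$, whereas the paper indexes by the $x$ \emph{preceding} the run (looking forward at $W_t$, extended into $R_\infty$) and keeps the cutoff; the two computations yield the same bound.
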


\begin{proof}
    Consider again $R_{\infty}=X_1X_2\cdots$ as an infinite word in the generators $x,y$, and $R_n$ as its prefix consisting of the first $n$ letters. For each $1\le t\le n$, let $W_t$ the number of $y$'s that appear from position $t+1$ until the first $x$. Then
    \[
        \left|\pi_k(R_n)\right| \le \sum_{t=1}^n \ind{\set{X_t=x}}\ind{\set{m_k\le W_t<m_{k+1}}}(W_t+1).
    \]
    Indeed, the right hand side sums the lengths of all appearances of $xy^jx$ for $m_k\le j<m_{k+1}$ in $R_n$ before applying any reductions. Therefore
    \[
        \E\left|\pi_k(R_n)\right| \le \sum_{t=1}^n \E[W_t+1\mid X_t=x,m_k\le W_t<m_{k+1}]\Pr(X_t=x,m_k\le W_t<m_{k+1}).
    \]
    By the Markov property of random walks, $W_t$ is independent of $X_t$, and thus
    \begin{align*}
        \E\left|\pi_k(R_n)\right| &\le \sum_{t=1}^n \E[W_t+1\mid m_k\le W_t<m_{k+1}]\Pr(X_t=x)\Pr(m_k\le W_t<m_{k+1}) \\
        & \le \frac{1}{2^{m_k+1}}\sum_{t=1}^n \E[W_t+1\mid m_k\le W_t<m_{k+1}].
    \end{align*}

    Note that $W_t+1$ is again a geometric random variable with parameter $\frac{1}{2}$. In a similar manner to the proof of \Pref{prop:up-bound-pi-k}, $\E[W_t+1\mid m_k\le W_t<m_{k+1}]\le m_k+1$ for all $t$. Therefore we conclude that
    \[
        \E\left|\pi_k(R_n)\right| \le \frac{m_k+2}{2^{m_k+1}}n.
    \]
\end{proof}

Together, we reach the following upper bound:

\begin{cor}
    \[
        \E\left|R_n\right| \le \sum_{k:m_k\le n-2}\frac{m_k+2}{2^{m_k}}\min\set{2^{m_{k+1}},n}+3.
    \]
\end{cor}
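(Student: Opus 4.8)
The plan is to combine the two per-block estimates from \Pref{prop:up-bound-pi-k} and \Pref{prop:up-bound-pi-k-inds} with the identity \eqref{eq:length-Rn} and the trivial bounds on $N_0,N_\infty,\delta$. Starting from
\[
    \E\left|R_n\right| = \sum_{k=1}^{\infty}\E\left|\pi_k(R_n)\right| + \E[N_0(R_n)] + \E[N_{\infty}(R_n)] + \E[\delta(R_n)],
\]
I would bound the last three terms by $1+1+1=3$ using \Pref{prop:N-0-infty} and $\delta(R_n)\le 1$. For the main sum, the first observation is that $\pi_k(R_n)$ can only be nonempty if some subword $xy^jx$ with $m_k\le j<m_{k+1}$ actually occurs in the first $n$ letters of the walk; this forces $j\le n$, hence $m_k\le j\le n$, and moreover one needs at least the two $x$'s and the $j\ge m_k$ intervening $y$'s to fit, so in fact $m_k\le n-2$. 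Thus $\E\left|\pi_k(R_n)\right|=0$ for $m_k>n-2$, which cuts the sum down to $k$ with $m_k\le n-2$.

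Next, for each such $k$ I would take the minimum of the two available upper bounds. From \Pref{prop:up-bound-pi-k}, since $\pi_k(R_n)$ is a subword of $\pi_k(R_\infty)$ (\Rref{rem:R-infty}) we get $\E\left|\pi_k(R_n)\right|\le\E\left|\pi_k(R_\infty)\right|\le (m_k+2)2^{m_{k+1}-m_k}=\frac{m_k+2}{2^{m_k}}\cdot 2^{m_{k+1}}$; from \Pref{prop:up-bound-pi-k-inds}, $\E\left|\pi_k(R_n)\right|\le\frac{m_k+2}{2^{m_k+1}}n\le\frac{m_k+2}{2^{m_k}}n$. Taking the smaller of the two for each $k$ gives exactly the summand $\frac{m_k+2}{2^{m_k}}\min\set{2^{m_{k+1}},n}$, and summing over $k$ with $m_k\le n-2$ and adding the $+3$ yields the claimed inequality.

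The only slightly delicate point — and the one I would state carefully rather than the routine arithmetic — is the justification that $\E\left|\pi_k(R_n)\right|$ vanishes unless $m_k\le n-2$: this rests on the deterministic fact that after the reduction procedure (\Rref{rem:diamond-alg}) the total number of letters only decreases, so any $y^j$ block surviving in the reduced form of $R_n$ came from at least $j$ letters $y$ among $X_1,\dots,X_n$, forcing $j\le n$, and the block being a genuine $xy^jx$-subword needs the flanking $x$'s too. Everything else is a direct substitution of the two previous propositions; there is no real obstacle, which is why this is recorded as a corollary.
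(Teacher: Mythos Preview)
Your proposal is correct and follows essentially the same route as the paper: decompose $\E|R_n|$ via \eqref{eq:length-Rn}, bound $\E[N_0]+\E[N_\infty]+\E[\delta]\le 3$ by \Pref{prop:N-0-infty}, discard the terms with $m_k>n-2$ because $xy^jx$ already has length $\ge m_k+2>n$, and for the remaining $k$ take the minimum of the bounds from \Pref{prop:up-bound-pi-k} (via the subword inequality $\E|\pi_k(R_n)|\le\E|\pi_k(R_\infty)|$ of \Rref{rem:R-infty}) and \Pref{prop:up-bound-pi-k-inds}. The only cosmetic difference is that the paper dispatches the vanishing for $m_k>n-2$ in one line by the length count, whereas you route it through \Rref{rem:diamond-alg}; both are fine.
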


\begin{proof}
    By \eqref{eq:length-Rn},
    \begin{align*}
        \E\left|R_n\right| &= \sum_{k=1}^{\infty}\E\left|\pi_k(R_n)\right| + \E[N_0(R_n)] + \E[N_{\infty}(R_n)] + \E[\delta(R_n)] \\
        & \le \E\left|R_n\right| = \sum_{k=1}^{\infty}\E\left|\pi_k(R_n)\right| + 3.
    \end{align*}
    Note also that $N_k(R_n)=0$ for any $k$ with $m_k>n-2$, since then $xy^jx$ is a word of length $j+2>n$ for any $m_k\le j<m_{k+1}$, and thus cannot be a subword of $R_n$. It is therefore enough to prove that $\E\left|\pi_k(R_n)\right|\le \frac{m_k+2}{2^{m_k}}\min\set{2^{m_{k+1}},n}$ for all $k\in\N$ with $m_k\le n-2$. Indeed, note first that $\pi_k(R_n)$ is a subword of $\pi_k(R_{\infty})$ and both are reduced, hence by \Pref{prop:up-bound-pi-k} we have
    \[
        \E\left|\pi_k(R_n)\right| \le \E\left|\pi_k(R_{\infty})\right| \le (m_k+2)2^{m_{k+1}-m_k}.
    \]
    The fact that $\E\left|\pi_k(R_n)\right|\le\frac{m_k+2}{2^{m_k+1}}n$ follows from \Pref{prop:up-bound-pi-k-inds}.
\end{proof}

For the lower bound on the speed of $R_n$, we focus on bounding $\E\left|\pi_k(R_n)\right|$ for $k\in\N$ from below. We do this by estimating the number of subwords $xy^jx$ for $m_k\le j<m_{k+1}$ of an appropriate prefix of $R_n$.

\begin{prop}\label{prop:low-bound-N-k}
    For any $k\in\N$ such that $m_k\le n-2$, we have $$ \E[N_k(R_n)]\ge \frac{\min\set{2^{m_{k+1}},n}}{2^{m_k+3}}. $$
\end{prop}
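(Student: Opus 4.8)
The plan is to lower-bound $\E[N_k(R_n)]$ by counting, from below, the expected number of disjoint ``blocks'' of the form $y^j x$ with $m_k \le j < m_{k+1}$ that appear in an appropriate prefix of $R_\infty$ and survive the reduction procedure. The key observation is that $N_k(R_n)$ counts the number of letters $xy^{j_{k,1}}, \ldots, xy^{j_{k,N_k}}$ in $\pi_k(R_n)$, and since $\pi_k(R_n)$ is a subword of $\pi_k(R_\infty)$ which is itself obtained by deleting absorbed blocks, a block $xy^j x$ with $m_k \le j < m_{k+1}$ appearing in the prefix of length $n$ will contribute to $N_k(R_n)$ unless it is absorbed by an \emph{earlier} block $xy^{j'}x$ with $j' \succeq j$, which in particular forces $j' \ge m_k$ as well (in fact, by the partition structure, an absorbing block must come from the same or a later class, hence have exponent $\ge m_k$). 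The first step is therefore to set up a clean lower bound of the shape
\[
    N_k(R_n) \ge \#\set{\text{surviving } (m_k,m_{k+1})\text{-blocks in the first } n' \text{ letters}}
\]
for a suitable $n' \le n$, say $n' = \min\set{2^{m_{k+1}}, n}$, or rather $n'/2$ to leave room.

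Second, I would work with the $Y$-decomposition $R_\infty = Y_1 Y_2 \cdots$ from \Rref{rem:R-infty}, where $\Pr(Y_i = y^j x) = 2^{-(j+1)}$, and estimate the expected number of indices $i$ with $|Y_i| \in [m_k+1, m_{k+1}]$ (equivalently, $Y_i = y^j x$ with $m_k \le j < m_{k+1}$) among the first $\sim n'$ letters of $R_\infty$ --- note the number of such $Y_i$'s is of order $n' \cdot \Pr(m_k \le |Y_1| - 1 < m_{k+1}) \asymp n'(2^{-m_k} - 2^{-m_{k+1}}) \gtrsim 2^{-m_k} n'$, using $m_{k+1} > m_k$. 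Then I must discount the blocks that get absorbed. A $(m_k,m_{k+1})$-block $Y_i$ is deleted only if some \emph{earlier} block $Y_{i'}$ ($i' < i$) with exponent $j' \succeq j$ absorbs it; by the definition of $\preceq$ and $\prec$ and the fact that our block has exponent in $[m_k, m_{k+1})$, the absorbing block must have exponent in the same class $[m_k, m_{k+1})$ or a strictly later class. I would bound the probability that \emph{any} such absorbing block occurs among the first $\sim n'$ letters: the expected number of blocks with exponent $\ge m_k$ in a prefix of length $n'$ is $\lesssim 2^{-m_k} n' \le 2^{-m_k} \cdot 2^{m_{k+1}}$, and --- this is where the choice $n' = \min\set{2^{m_{k+1}}, n}$ matters --- one can arrange the constants so that the expected number of ``dangerous'' (potentially absorbing) blocks is small enough that at least a constant fraction of the $(m_k, m_{k+1})$-blocks survive in expectation. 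Concretely I expect an argument of the form: $\E[N_k(R_n)] \ge \E[\#(m_k,m_{k+1})\text{-blocks in prefix}] - \E[(\#(m_k,m_{k+1})\text{-blocks}) \cdot (\text{indicator a later/equal-class block exists})]$, and bounding the second term by a crude first-moment / union bound.

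The main obstacle will be handling the absorption bookkeeping cleanly: a block $xy^jx$ is absorbed iff there is an earlier block $xy^{j'}x$ with $j' \succeq j$, but ``earlier'' refers to position in the \emph{reduced} word, and multiple blocks in the same class can cascade. The cleanest route is to avoid tracking the reduction dynamics precisely and instead observe that a $(m_k, m_{k+1})$-block at position $i$ definitely survives if \emph{no} block with exponent $\ge m_k$ occurs before it among the first $n'$ letters except possibly blocks in strictly earlier classes --- wait, earlier-class blocks have exponent $< m_k$, so they never absorb it. Thus: a $(m_k,m_{k+1})$-block survives if it is the \emph{first} block with exponent $\ge m_k$ in the prefix. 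Better yet, it survives as long as no block with exponent $> $ its own and $\prec$-dominating it appears earlier; a safe sufficient condition is that it is the first block in the prefix with exponent $\ge m_k$. This reduces the count to: $\Pr(\text{the first exponent-}\ge m_k \text{ block in the prefix is a } (m_k,m_{k+1})\text{-block})$, which is $\frac{\Pr(m_k \le |Y|-1 < m_{k+1})}{\Pr(|Y|-1 \ge m_k)} = \frac{2^{-m_k} - 2^{-m_{k+1}}}{2^{-m_k}} = 1 - 2^{m_k - m_{k+1}} \ge \tfrac12$, times the probability that at least one exponent-$\ge m_k$ block occurs among the first $\sim n'/$(something) letters of $R_\infty$, which is $\gtrsim 1$ by the choice of $n'$ and a second-moment or geometric-tail estimate. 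Packaging these two constant-order factors together with the crude accounting for how many $Y$-letters fit in $n'$ positions of $R_n$ should yield exactly $\E[N_k(R_n)] \ge \min\set{2^{m_{k+1}}, n}/2^{m_k+3}$, with the constant $2^{-3}$ absorbing the various $\tfrac12$'s and the loss from passing between letter-count and $Y$-count.
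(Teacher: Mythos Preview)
Your proposal has a genuine gap, and it stems from a single misreading of the absorption rule. You write that a $(m_k,m_{k+1})$-block is absorbed by an earlier block with exponent $j'\succeq j$, ``from the same or a later class.'' But the defining relation $xy^jxy^{j'}x=xy^jx$ requires $j'\prec j$ \emph{strictly}: blocks in the same class never absorb one another. Consequently a $(m_k,m_{k+1})$-block survives in the reduced form iff no \emph{earlier} block has exponent $\ge m_{k+1}$, not $\ge m_k$.

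This matters quantitatively. Your final ``safe sufficient condition'' --- that the block is the first one with exponent $\ge m_k$ --- singles out at most one block, so the argument you sketch yields only $\E[N_k(R_n)]\ge c$ for some absolute constant $c$. The proposition, however, asks for $\E[N_k(R_n)]\ge \min\{2^{m_{k+1}},n\}/2^{m_k+3}$, which is of order $2^{m_{k+1}-m_k}$ when $n$ is large and can be arbitrarily big. Your packaging sentence (``the crude accounting for how many $Y$-letters fit in $n'$ positions'') does not recover this factor: once you have committed to counting only the first qualifying block, no amount of bookkeeping produces more than one.

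With the corrected survival criterion the paper's route becomes natural: every $(m_k,m_{k+1})$-block that appears before the first exponent-$\ge m_{k+1}$ block survives. The paper sums over positions $i\le\min\{2^{m_{k+1}-1},n\}$ and, for each $i$, considers the event $A_i$ that a good block starts at $i$ \emph{and} no $xy^{m_{k+1}}$ has appeared in $X_1\cdots X_{i-1}$. Conditioning on $X_i=x$ makes these two parts independent; the second has probability $\ge\tfrac12$ by Markov's inequality (the expected number of $xy^{m_{k+1}}$'s in a window of length $\le 2^{m_{k+1}-1}$ is at most $\tfrac12$), and the first has probability $\ge 2^{-m_k}$. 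Summing $\Pr(A_i)\ge 2^{-m_k-2}$ over all such $i$ gives the claimed bound. Your inclusion--exclusion idea $\E[\#\text{good}]-\E[\#\text{good}\cdot\ind{\text{bad block exists}}]$ runs into the dependence between the count and the indicator; the per-position decomposition is exactly what sidesteps this.
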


\begin{proof}
    Write $R_n=X_1\cdots X_n$. For any $1\le i\le 2^{m_{k+1}}$, let $A_i$ denote the event that there is some $m_k\le j<m_{k+1}$ such that $X_i\cdots X_{i+j+1}=xy^jx$, and that $y^{m_{k+1}}$ is not a subword of $X_1\cdots X_{i-1}$. We first claim that $N_k(R_n)\ge\sum_{i=1}^{\min\set{2^{m_{k+1}-1},n}}\ind{A_i}$. Indeed, $N_k(R_n)$ counts the appearances of subwords of the form $xy^jx$ for $m_k\le j<m_{k+1}$ in the reduced form of $R_n$, and the event $A_i$ precisely means that such a subword starts at position $i$. Therefore
    \[
        \E[N_k(R_n)] \ge \sum_{i=1}^{\min\set{2^{m_{k+1}-1},n}} \E[\ind{A_i}] = \sum_{i=1}^{\min\set{2^{m_{k+1}-1},n}}\Pr(A_i).
    \]

    Next, we write $A_i=B_i\cap C_i$, where $B_i$ is the event that $xy^{m_{k+1}}$ is not a subword of $X_1\cdots X_{i-1}$, and $C_i$ is the event that $X_i\cdots X_{i+j+1}=xy^jx$ for some $m_k\le j<m_{k+1}$. Conditioned on $X_i=x$, the events $B_i$ and $C_i$ are independent. Indeed, conditioned on $X_i=x$, the event $B_i$ depends only on $X_1,\cdots,X_{i-1}$, and $C_i$ depends only on $X_{i+1},\cdots,X_n$. Therefore, for any $1\le i\le n-m_k-1$ we have
    \begin{eqnarray*}
        \Pr(A_i) = \Pr(B_i\cap C_i) & = & \frac{1}{2}\Pr(B_i\cap C_i\mid X_i=x) \\ & = & \frac{1}{2}\Pr(B_i\mid X_i=x)\Pr(C_i\mid X_i=x).
    \end{eqnarray*}
    We claim that $\Pr(B_i\mid X_i=x)\ge\frac{1}{2}$. Indeed, let $M$ denote the number of appearances of $y^{m_{k+1}}$ in $X_1\cdots X_{i-1}$. At any given position, the probability that $xy^{m_{k+1}}$ starts at that position is $2^{-m_{k+1}-1}$, and thus $\E[M]\le(i-1)2^{-m_{k+1}-1}\le\frac{1}{2}$. By Markov's inequality, $\Pr(M\ge 1\mid X_i=x)\le\frac{1}{2}$, and thus $\Pr(B_i\mid X_i=x)\ge\frac{1}{2}$. Finally, note that $\Pr(C_i\mid X_i=x)\ge\Pr(X_{i+1}\cdots X_{i+m_k}=y^{m_k})\ge 2^{-m_k}$.

    Combining all inequalities, we have
    \[
        \Pr(A_i) \ge 2^{-m_k-2}\,,
    \]
    and thus
    \[
        \E[N_k(R_n)] \ge \sum_{i=1}^{\min\set{2^{m_{k+1}-1},n}}\Pr(A_i) \ge \frac{\min\set{2^{m_{k+1}},n}}{2^{m_k+3}},
    \]
    completing the proof.
\end{proof}

Combining all values of $k$, we get:

\begin{cor}
    \[
        \E\left|R_n\right| \ge \sum_{k:m_k\le n-2}\frac{m_k+1}{2^{m_k+3}}\min\set{2^{m_{k+1}},n}.
    \]
\end{cor}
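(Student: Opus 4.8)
The plan is to mirror the structure of the upper bound corollary: apply the length decomposition \eqref{eq:length-Rn}, discard the nonnegative terms $N_0(R_n)$, $N_\infty(R_n)$, $\delta(R_n)$, and bound $\E|R_n|$ from below by $\sum_{k=1}^\infty \E|\pi_k(R_n)|$. Since $\pi_k(R_n) = x y^{j_{k,1}}\cdots x y^{j_{k,N_k(R_n)}}$ with each exponent satisfying $m_k \le j_{k,\ell} < m_{k+1}$, every block $x y^{j_{k,\ell}}$ contributes at least $m_k + 1$ to the length, so $|\pi_k(R_n)| \ge (m_k+1) N_k(R_n)$ pointwise, hence $\E|\pi_k(R_n)| \ge (m_k+1)\E[N_k(R_n)]$.

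First I would restrict attention to those $k$ with $m_k \le n-2$: for all other $k$, the word $xy^jx$ has length $j+2 > n$ whenever $m_k \le j < m_{k+1}$, so it cannot occur as a subword of $R_n$ and $N_k(R_n) = 0$, contributing nothing. For the relevant $k$, I would invoke \Pref{prop:low-bound-N-k} directly, which gives $\E[N_k(R_n)] \ge \min\set{2^{m_{k+1}},n}/2^{m_k+3}$. Substituting this in yields
\[
    \E|R_n| \ge \sum_{k : m_k \le n-2} (m_k+1)\,\E[N_k(R_n)] \ge \sum_{k : m_k \le n-2} \frac{m_k+1}{2^{m_k+3}}\min\set{2^{m_{k+1}},n},
\]
which is exactly the claimed bound.

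There is essentially no obstacle here: all the probabilistic work has already been done in \Pref{prop:low-bound-N-k}, and the only genuinely new observation is the pointwise inequality $|\pi_k(R_n)| \ge (m_k+1) N_k(R_n)$, which is immediate from the definition of $\pi_k$ and the constraint $j_{k,\ell} \ge m_k$ on the exponents appearing in it. The one point that warrants a sentence of care is that the sum over $k$ of the $\E|\pi_k(R_n)|$ is indeed a valid lower bound for $\E|R_n|$ — this follows from \eqref{eq:length-Rn} together with nonnegativity of $N_0(R_n)$, $N_\infty(R_n)$, and $\delta(R_n)$, and from monotone convergence (or simply that only finitely many summands are nonzero, since $N_k(R_n) = 0$ once $m_k > n-2$). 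With that in hand, the proof is a two-line deduction.
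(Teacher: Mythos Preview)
Your proposal is correct and follows essentially the same argument as the paper: drop the nonnegative terms in \eqref{eq:length-Rn}, use the pointwise bound $|\pi_k(R_n)|\ge (m_k+1)N_k(R_n)$, restrict to $k$ with $m_k\le n-2$, and invoke \Pref{prop:low-bound-N-k}. The paper's proof is exactly this chain of inequalities, stated slightly more tersely.
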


\begin{proof}
    By \eqref{eq:length-Rn}, it follows that
    \begin{eqnarray*}
        \E\left|R_n\right| & \ge & \sum_{k=1}^{\infty}\E\left|\pi_k(R_n)\right| \\ &  \ge & \sum_{k=1}^{\infty}(m_k+1)\E[N_k(R_n)] \\ & \ge &  \sum_{k:m_k\le n-2}(m_k+1)\E[N_k(R_n)],
    \end{eqnarray*}
    and the claim now follows from \Pref{prop:low-bound-N-k}.
\end{proof}

Our upper and lower bounds on the speed of $R_n$ are summarized in this corollary:

\begin{cor} \label{cor speed gamma}
  We have
  \[
    \sum_{k:m_k\le n-2}\frac{m_k+1}{2^{m_k+3}}\min\set{2^{m_{k+1}},n} \le \E\left|R_n\right| \le \sum_{k:m_k\le n-2}\frac{m_k+2}{2^{m_k}}\min\set{2^{m_{k+1}},n}+3.
  \]
\end{cor}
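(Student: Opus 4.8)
The plan is simply to assemble the two preceding corollaries, which already supply the two halves of the claimed double inequality; no new idea is needed. For the lower bound, recall from \eqref{eq:length-Rn} that $\left|R_n\right|\ge \sum_{k\ge 1}\left|\pi_k(R_n)\right|$, and that a nonempty $\pi_k(R_n)=xy^{j_{k,1}}\cdots xy^{j_{k,N_k(R_n)}}$ with all $j_{k,\ell}\ge m_k$ has length at least $(m_k+1)N_k(R_n)$; summing \Pref{prop:low-bound-N-k} over the indices $k$ with $m_k\le n-2$ then yields $\E\left|R_n\right|\ge \sum_{k:m_k\le n-2}\frac{m_k+1}{2^{m_k+3}}\min\set{2^{m_{k+1}},n}$, which is the left-hand side. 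For the upper bound, \eqref{eq:length-Rn} together with \Pref{prop:N-0-infty} accounts for the additive $3$ (bounding $\E[N_0(R_n)]+\E[N_{\infty}(R_n)]+\E[\delta(R_n)]\le 3$), and for each $k$ with $m_k\le n-2$ one uses whichever of the two estimates is smaller: \Pref{prop:up-bound-pi-k} (applied via the fact, noted in \Rref{rem:R-infty}, that $\pi_k(R_n)$ is a reduced subword of $\pi_k(R_{\infty})$) gives $\E\left|\pi_k(R_n)\right|\le (m_k+2)2^{m_{k+1}-m_k}$, while \Pref{prop:up-bound-pi-k-inds} gives $\E\left|\pi_k(R_n)\right|\le \frac{m_k+2}{2^{m_k+1}}n$; together these give $\E\left|\pi_k(R_n)\right|\le \frac{m_k+2}{2^{m_k}}\min\set{2^{m_{k+1}},n}$, and summing over $k$ produces the right-hand side.

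The only point requiring a word of justification is the truncation of both sums to the indices $k$ with $m_k\le n-2$: if $m_k>n-2$ then every word $xy^jx$ with $m_k\le j<m_{k+1}$ has length $j+2>n$ and hence cannot occur as a subword of $R_n$, so $N_k(R_n)=0$ and $\pi_k(R_n)=1$ identically; those terms are therefore absent from $\left|R_n\right|$ and may be dropped from both bounds without loss. I do not expect any genuine obstacle: all the analytic content has been carried out in \Pref{prop:N-0-infty}, \Pref{prop:up-bound-pi-k}, \Pref{prop:up-bound-pi-k-inds} and \Pref{prop:low-bound-N-k}, and the present corollary is merely the packaging of those estimates into the single two-sided form that will be convenient when optimizing over the choice of the sequence $\vec m$ in the proofs of Theorems A and B.
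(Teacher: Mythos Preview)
Your proposal is correct and follows essentially the same approach as the paper: the paper proves the upper and lower halves in two separate unnamed corollaries immediately preceding this one (using exactly the ingredients you list---\Pref{prop:N-0-infty}, \Pref{prop:up-bound-pi-k} via \Rref{rem:R-infty}, \Pref{prop:up-bound-pi-k-inds}, and \Pref{prop:low-bound-N-k}, together with the truncation argument $m_k>n-2\Rightarrow \pi_k(R_n)=1$), and then simply restates them together here without further proof. Your write-up is, if anything, slightly more explicit than the paper's.
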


\section{Realization of speed functions}\label{sec5}

Given a sequence $\vec{m}=(m_1,m_2,\dots)$ (with $m_1=1$) 
,  denote:
\[
\gamma_{\vec{m}}(n) = \sum_{k:\ m_{k+1} \leq \log_2 n} m_k 2^{m_{k+1}-m_k}.
\]

\begin{lem} \label{lem:speed gamma} Suppose that, for some constants $\beta \geq 1$ and $\delta>0$ we have that $m_i\leq \beta m_{i-1} + \delta$ for all $i > 1$. Then
\[
C_1\gamma_{\vec{m}}(n) \leq \E|R_n| \leq C_2 \left( \gamma_{\vec{m}}(n) + n^{\frac{\beta-1}{\beta}}\log_2 n \right)
\]
for some positive constants $C_1,C_2>0$.
\end{lem}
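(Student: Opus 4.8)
The plan is to start from the two-sided estimate in \Cref{cor speed gamma}, namely
\[
\sum_{k:\,m_k\le n-2}\frac{m_k+1}{2^{m_k+3}}\min\set{2^{m_{k+1}},n}\le \E\left|R_n\right|\le \sum_{k:\,m_k\le n-2}\frac{m_k+2}{2^{m_k}}\min\set{2^{m_{k+1}},n}+3,
\]
and to show that the common value of both sides is comparable to $\gamma_{\vec m}(n)$, up to the additive error term $n^{(\beta-1)/\beta}\log_2 n$ that appears only in the upper bound. The key observation is that the summand $S_k(n):=\frac{m_k}{2^{m_k}}\min\set{2^{m_{k+1}},n}$ behaves differently depending on whether $2^{m_{k+1}}\le n$ or not: in the first regime it equals $m_k 2^{m_{k+1}-m_k}$, which is exactly the term contributing to $\gamma_{\vec m}(n)$; in the second regime it equals $\frac{m_k}{2^{m_k}}n$. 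So I would split each of the two sums in \Cref{cor speed gamma} at the index $k_0=k_0(n)$ determined by $m_{k_0+1}\le \log_2 n < m_{k_0+2}$ (equivalently $2^{m_{k_0+1}}\le n<2^{m_{k_0+2}}$), handle the ``small $k$'' part as (a constant times) $\gamma_{\vec m}(n)$, and bound the ``large $k$'' tail $\sum_{k>k_0}\frac{m_k}{2^{m_k}}n$ separately.

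For the lower bound, since every term is nonnegative, it suffices to keep only the indices $k$ with $2^{m_{k+1}}\le n$ (these automatically satisfy $m_k\le n-2$ once $n$ is large, and for small $n$ the statement is trivial by adjusting $C_1$), where $\frac{m_k+1}{2^{m_k+3}}\min\set{2^{m_{k+1}},n}=\frac{m_k+1}{8}\cdot 2^{m_{k+1}-m_k}\ge \tfrac18 m_k 2^{m_{k+1}-m_k}$; summing over these $k$ gives $\E|R_n|\ge \tfrac18\gamma_{\vec m}(n)$, so $C_1=\tfrac18$ works. For the upper bound I would first note $m_k+2\le 3m_k$ (valid since $m_k\ge m_1=1$), bound the head $\sum_{k\le k_0}\frac{m_k+2}{2^{m_k}}\cdot 2^{m_{k+1}}\le 3\gamma_{\vec m}(n)$, and then estimate the tail $T(n):=n\sum_{k>k_0}\frac{m_k+2}{2^{m_k}}$. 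This is where the growth hypothesis $m_i\le \beta m_{i-1}+\delta$ enters: it guarantees that $m_k$ does not grow too fast past the threshold, so that $m_{k_0+1}\le \beta m_{k_0}+\delta$ forces $m_{k_0}\gtrsim (\log_2 n)/\beta - \delta/\beta$, whence $2^{-m_{k_0+1}}\le 2^{-m_{k_0+1}}$ and more usefully $2^{-m_{k_0+1}}\lesssim n^{-1/\beta}\cdot 2^{\delta/\beta}$ (since $m_{k_0+1}\ge \log_2 n - (m_{k_0+2}-m_{k_0+1})$ is the wrong direction; the right bound is $m_{k_0+1}\ge m_{k_0+2}/\beta - \delta/\beta > (\log_2 n)/\beta-\delta/\beta$). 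The tail sum $\sum_{k>k_0}\frac{m_k+2}{2^{m_k}}$ is then geometrically dominated by its first term up to a factor depending only on the linear-ish growth of $m_k$, and one multiplies by $n$ and by a $\log_2 n$ factor coming from the size of $m_{k_0}$, producing the claimed $n^{(\beta-1)/\beta}\log_2 n$ term. Collecting constants gives a single $C_2$.

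The main obstacle — and the only place requiring care rather than routine manipulation — is the tail estimate $T(n)=n\sum_{k>k_0}\frac{m_k+2}{2^{m_k}}$. One must argue that this geometric-type sum is controlled by its leading term: because $m_k$ is increasing and integer-valued, $\sum_{k>k_0}\frac{m_k}{2^{m_k}}\le \sum_{m\ge m_{k_0+1}}\frac{m}{2^m}\asymp \frac{m_{k_0+1}}{2^{m_{k_0+1}}}$, and then one needs $\frac{m_{k_0+1}}{2^{m_{k_0+1}}}n\lesssim n^{(\beta-1)/\beta}\log_2 n$. For this we need a lower bound on $m_{k_0+1}$ in terms of $\log_2 n$, and this is precisely what $m_{k_0+2}\le \beta m_{k_0+1}+\delta$ provides: since $m_{k_0+2}>\log_2 n$ we get $m_{k_0+1}\ge \tfrac1\beta(\log_2 n-\delta)$, so $2^{-m_{k_0+1}}\le 2^{\delta/\beta} n^{-1/\beta}$ and $m_{k_0+1}\le \log_2 n$ (as $m_{k_0+1}\le\log_2 n$ by definition of $k_0$), giving $\frac{m_{k_0+1}}{2^{m_{k_0+1}}}n\le 2^{\delta/\beta}\,n^{1-1/\beta}\log_2 n$, as desired. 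One should also double-check the boundary/small-$n$ cases (where $k_0$ may be $0$ or undefined, so $\gamma_{\vec m}(n)=0$ and $\E|R_n|$ is bounded by an absolute constant, absorbed into $C_2 n^{(\beta-1)/\beta}\log_2 n$ once $n\ge 2$), and verify that the discrepancy between $\min\set{2^{m_{k+1}},n}$ with threshold $m_k\le n-2$ versus the clean threshold $2^{m_{k+1}}\le \log_2 n$ in $\gamma_{\vec m}$ only costs boundedly many extra terms, each contributing $\lesssim \frac{m_k}{2^{m_k}}n\le n$, which is again absorbed. I expect everything else to be bookkeeping.
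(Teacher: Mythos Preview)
Your proposal is correct and follows essentially the same route as the paper: both start from \Cref{cor speed gamma}, drop to the indices with $m_{k+1}\le \log_2 n$ for the lower bound to obtain $\tfrac18\gamma_{\vec m}(n)$, and for the upper bound split at the same threshold, bound the head by $3\gamma_{\vec m}(n)$, and control the tail $n\sum_{k:\,m_{k+1}>\log_2 n}\frac{m_k}{2^{m_k}}$ by passing to the integer sum $\sum_{r\ge N}\frac{r}{2^r}\asymp N2^{-N}$ with $N\gtrsim\frac{1}{\beta}\log_2 n-\frac{\delta}{\beta}$ coming from the hypothesis $m_i\le\beta m_{i-1}+\delta$. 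The only cosmetic difference is that the paper takes $N=\lceil\frac{1}{\beta}\log_2 n-\frac{\delta}{\beta}\rceil$ directly as the summation threshold, whereas you first sum from $m_{k_0+1}$ and then invoke $m_{k_0+1}\ge\frac{1}{\beta}(\log_2 n-\delta)$; the resulting bounds coincide.
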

\begin{proof}
By Corollary \ref{cor speed gamma},

\[
\E|R_n| \geq \sum_{k:\, m_{k+1}\le\log_2 n} (m_k+1) 2^{m_{k+1}-m_k-3} \geq \frac{1}{8}\gamma_{\vec{m}}(n).
\]

On the other hand,
\begin{eqnarray*}
    && \sum_{k=1}^{\infty} \frac{m_k + 2}{2^{m_k}} \min\left\{2^{m_{k+1}},n \right\} \leq \nonumber \\ && \leq \sum_{k:\ m_{k+1} \leq \log_2 n} (m_k + 2) 2^{m_{k+1}-m_k} +
 \sum_{k:\ m_{k+1}>\log_2 n} \frac{(m_k + 2)n}{2^{m_k}}  \nonumber \\ && \leq 3 \gamma_{\vec{m}}(n) + 3n \sum_{k:\ m_{k+1}>\log_2 n}\frac{m_k}{2^{m_k}}
\end{eqnarray*}
and
\begin{eqnarray*}
\sum_{k:\ m_{k+1}>\log_2 n}\frac{m_k}{2^{m_k}} \leq \sum_{\substack{r\in \mathbb{Z}: \\ r\geq \frac{1}{\beta} \log_2 n - \frac{\delta}{\beta}}}\frac{r}{2^{r}}.
\end{eqnarray*}
Fix $N>1$. Let $g(t)=\sum_{r = N}^{\infty} rt^r=t\left(\frac{t^N}{1-t}\right)'=\frac{t^N}{(1-t)^2}(N(1-t)+t)$. Therefore $g(1/2)\leq N2^{-N-2}$, so, taking $N=\lceil \frac{1}{\beta}\log_2 n - \frac{\delta}{\beta} \rceil$,
\begin{eqnarray*}
\sum_{k:\ m_{k+1}>\log_2 n}\frac{m_k}{2^{m_k}} \leq \left( \frac{1}{\beta}\log_2 n -\frac{\delta}{\beta} \right)2^{-\frac{1}{\beta}\log_2n + \frac{\delta}{\beta}} \leq C n^{-\frac{1}{\beta}}\log_2 n
\end{eqnarray*}
for $C=2^{\frac{\delta}{\beta}}\cdot \frac{1}{\beta}$.
Hence:
\begin{eqnarray*}
\sum_{k=1}^{\infty} \frac{m_k + 2}{2^{m_k}} \min\left\{2^{m_{k+1}},n \right\} \leq 3 \gamma_{\vec{m}}(n) + 3n \cdot Cn^{-\frac{1}{\beta}}\log_2 n.
\end{eqnarray*}
Collecting pieces, using Corollary \ref{cor speed gamma},
\begin{eqnarray*}
\frac{1}{8}\gamma_{\vec{m}}(n) \leq \E|R_n| \leq 3\gamma_{\vec{m}}(n)+ 3 C n^{1-\frac{1}{\beta}}\log_2 n+3,
\end{eqnarray*}
and the claim follow.
\end{proof}

This lemma allows us to understand the slowest rate of speed functions our construction yields:

\begin{exmpl}
    Take the sequence $m_k=k$. Then
    \[
        \gamma_{\vec{m}}(n) = \sum_{k=1}^{\floor{\log_2 n}-1} 2k = \binom{\floor{\log_2 n}}{2}.
    \]
    We may therefore apply \Lref{lem:speed gamma} with $\beta=1$ and $\delta=1$, and deduce that the speed of the random walk $R_n$ on the semigroup $\Sm$ (corresponding to the sequence $m_k=k$) satisfies $\E\left|R_n\right|\asymp\log^2 n$.
\end{exmpl}

\subsection{Arbitrary speed exponents}

We can now prove:
\begin{thm}[{Theorem A}]
Let $\alpha\in (0,1)$. Then there exists a finitely generated semigroup such that, for a simple random walk supported on a finite generating set of it, $$\limsup_{n\rightarrow \infty} \log_n \E|R_n| = \alpha.$$
\end{thm}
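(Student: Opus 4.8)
The plan is to exhibit, for each $\alpha \in (0,1)$, an explicit sequence $\vec{m}$ so that the semigroup $\Sm$ of Construction~\ref{cons:Sm}, equipped with the simple random walk $R_n$ on $\set{x,y}$, has $\limsup_n \log_n \E|R_n| = \alpha$. By Lemma~\ref{lem:speed gamma}, it suffices to arrange that $\vec{m}$ satisfies the geometric-growth hypothesis $m_i \le \beta m_{i-1} + \delta$ with $\frac{\beta-1}{\beta} < \alpha$ (so that the error term $n^{(\beta-1)/\beta}\log_2 n$ is negligible compared to the target growth rate $n^\alpha$), and that the main term $\gamma_{\vec{m}}(n) = \sum_{k:\, m_{k+1}\le \log_2 n} m_k 2^{m_{k+1}-m_k}$ has $\limsup_n \log_n \gamma_{\vec{m}}(n) = \alpha$.

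The natural choice is to make the gaps $m_{k+1}-m_k$ grow so that a single term dominates the sum. If we set the gap $g_k := m_{k+1} - m_k$ and take $g_k \to \infty$ slowly, then $m_k = m_1 + \sum_{j<k} g_j \approx \sum_{j<k} g_j$, and the summand $m_k 2^{g_k}$ is dominated by its last few terms; when $m_{k+1} \approx \log_2 n$ the largest admissible term is roughly $m_{k} 2^{g_{k}} \approx (m_{k+1}) 2^{g_k}/2^{... }$, and one checks that $\log_2 \gamma_{\vec{m}}(n) \approx g_{k^*}$ where $k^*$ is the largest index with $m_{k^*+1}\le \log_2 n$, while $\log_2 n \approx m_{k^*+1} \approx \sum_{j\le k^*} g_j$. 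So $\log_n \gamma_{\vec{m}}(n) \approx g_{k^*}/\sum_{j\le k^*} g_j$. To force this ratio to have $\limsup$ equal to $\alpha$, I would take $g_k$ along a subsequence to be a large fraction of the running partial sum (pushing the ratio up toward $\alpha$) and along complementary blocks keep $g_k$ essentially constant (letting the partial sum catch up, pushing the ratio back down), so that the $\limsup$ is exactly $\alpha$ and never exceeds it. Concretely, one can choose, say, $g_k$ to be constant on long blocks punctuated by occasional jumps; the geometric-growth hypothesis $m_{k+1}\le \beta m_k + \delta$ forces the jumps to be at most a bounded multiplicative factor, which is compatible with $\frac{\beta-1}{\beta}<\alpha$ precisely because $\alpha>0$.

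The bookkeeping steps, in order, are: (1) fix $\beta$ with $1 \le \beta$ and $(\beta-1)/\beta < \alpha$, and pick a sequence of ``jump indices'' $k_1 < k_2 < \cdots$ together with block lengths, defining $\vec{m}$ recursively so that at each jump $m_{k_i+1} = \ceil{\beta m_{k_i}}$ (ensuring the hypothesis of Lemma~\ref{lem:speed gamma}) and between jumps $m$ increases by $1$ each step; (2) verify $m_1 = 1$ and monotonicity; (3) estimate $\gamma_{\vec{m}}(n)$ by isolating the dominant term, showing $c\, 2^{g_{k^*(n)}} \le \gamma_{\vec{m}}(n) \le \mathrm{poly}(\log n)\, 2^{g_{k^*(n)}}$ where $k^*(n)$ is the largest $k$ with $m_{k+1}\le \log_2 n$; (4) read off $\log_n \gamma_{\vec{m}}(n) = \frac{g_{k^*(n)} + O(\log\log n)}{\log_2 n}$ and, using $\log_2 n \in [m_{k^*+1}, m_{k^*+2})$, translate this into $\frac{g_{k^*}}{m_{k^*+1}} + o(1)$; (5) choose the jump schedule so that $\limsup$ of this quantity is exactly $\alpha$ — along jump points the ratio approaches $\alpha$, and the geometric-growth bound together with the constant-slope blocks keeps it from exceeding $\alpha$ in the limit; (6) invoke Lemma~\ref{lem:speed gamma} to transfer the conclusion from $\gamma_{\vec{m}}$ to $\E|R_n|$, noting the correction term $n^{(\beta-1)/\beta}\log_2 n$ contributes a speed exponent $\le (\beta-1)/\beta < \alpha$ and hence does not affect the $\limsup$.

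The main obstacle is step (5): making the $\limsup$ land \emph{exactly} on $\alpha$ rather than merely somewhere in $[(\beta-1)/\beta, 1]$. Getting the $\limsup$ to be at least $\alpha$ is easy — engineer sparse but large jumps. The delicate part is the matching upper bound $\limsup \le \alpha$, which requires that \emph{between} the jump times the ratio $g_k / m_{k+1}$ does not itself drift above $\alpha$; this is where keeping the slope constant (so $g_k$ is bounded while $m_{k+1}\to\infty$ on the blocks) does the work, but one must check the transitional indices near each jump carefully, since there the dominant term in $\gamma_{\vec{m}}$ can momentarily be the newly-jumped term. A clean way to handle this is to take $\beta$ very close to $1$ (so the jumps are modest, multiplying $m$ by a factor just above $1$) and to space the jumps geometrically in the index $k$, so that after each jump the partial sum $m_{k+1}$ is a definite fraction $1/\alpha$ (up to lower order) of what it was, and the running ratio relaxes back below $\alpha$ long before the next jump.
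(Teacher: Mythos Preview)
Your plan contains a genuine obstruction in step~(1) that makes step~(5) impossible. You propose to take $\beta$ with $\frac{\beta-1}{\beta} < \alpha$ so that the error term $n^{(\beta-1)/\beta}\log_2 n$ in Lemma~\ref{lem:speed gamma} is negligible. But the very growth constraint $m_{k+1}\le \beta m_k+\delta$ that this requires also caps $\gamma_{\vec{m}}$ at the same exponent. Indeed, from $m_{k+1}\le\beta m_k+\delta$ one gets $g_k=m_{k+1}-m_k\le\frac{\beta-1}{\beta}m_{k+1}+\frac{\delta}{\beta}$, so every summand in $\gamma_{\vec{m}}(n)$ satisfies $m_k 2^{g_k}\le (\log_2 n)\cdot C\,n^{(\beta-1)/\beta}$, and there are at most $\log_2 n$ summands. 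Hence $\limsup_n \log_n \gamma_{\vec{m}}(n)\le\frac{\beta-1}{\beta}<\alpha$, regardless of how you schedule the jumps. Your step~(4) is correct that $\log_n\gamma_{\vec{m}}(n)\approx g_{k^*}/m_{k^*+1}$, and this ratio is exactly what the inequality above bounds by $\frac{\beta-1}{\beta}$; so ``along jump points the ratio approaches $\alpha$'' in step~(5) is simply false for your construction.

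The fix, and what the paper does, is to take $\frac{\beta-1}{\beta}=\alpha$ \emph{exactly}. Then the error term in Lemma~\ref{lem:speed gamma} has exponent $\alpha$ as well, but this is harmless: the upper bound becomes $\E|R_n|\le C(\gamma_{\vec{m}}(n)+n^{\alpha}\log_2 n)$, which still gives $\limsup\log_n\E|R_n|\le\alpha$ once you know $\limsup\log_n\gamma_{\vec{m}}(n)\le\alpha$. The paper's sequence is $m_i=1+\lfloor\beta\rfloor+\cdots+\lfloor\beta^{i-1}\rfloor$, so the gaps $g_i=\lfloor\beta^i\rfloor$ grow geometrically and the ratio $g_k/m_{k+1}\to\frac{\beta-1}{\beta}=\alpha$; a short computation then shows $\limsup\log_n\gamma_{\vec{m}}(n)=\alpha$. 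Your ``constant slope with occasional jumps'' idea can also be made to work with this corrected choice of $\beta$ (the jump indices then give ratio tending to $\alpha$ and the constant blocks keep it below), but not with the strict inequality you proposed.
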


\begin{proof}
Fix a real number $\alpha\in (0,1)$ and let  $\beta>1$ be such that $\frac{\beta-1}{\beta}=\alpha$ (namely, $\beta = \frac{1}{1-\alpha}$). For each $i\geq 1$, let $m_{i}= 1+\lfloor \beta \rfloor+\cdots+\lfloor \beta^{i-1} \rfloor$.

Let $N\in \mathbb{N}$ and let $t$ be such that $\log_2 N\in [m_t,m_{t+1})$. Then
\[
\gamma_{\vec{m}}(N) = \sum_{k:\ m_{k+1} \leq \log_2 N} m_k 2^{m_{k+1}-m_k} = \sum_{k=1}^{t-1} m_k 2^{m_{k+1}-m_k} = \gamma_{\vec{m}} (2^{m_t}).
\]
Let us focus on $n=2^{m_t}$ for $t\gg_\beta 0$. Then
\begin{eqnarray*} \frac{\beta}{\beta - 1}\cdot \beta^{t-1} - 2t \leq \frac{\beta^t - 1}{\beta - 1} - t & \leq & 1+\lfloor \beta \rfloor+\cdots+\lfloor \beta^{t-1} \rfloor \\ & \leq & \frac{\beta^t - 1}{\beta - 1} \leq \frac{\beta}{\beta - 1}\cdot \beta^{t-1}. \end{eqnarray*}
Notice that $$\log_2\log_2 n\geq \log_2 \lfloor \beta^{t-1} \rfloor \geq (t-1)\log_2 \beta - 1 \geq ct$$ for some $c>0$ depending only on $\beta$ (recall that $t\gg_\beta 0$). Hence $$\log_2 n = \frac{\beta}{\beta - 1} \cdot \beta^{t-1} + \Delta,\ \ \ \text{where}\ \ \  \Delta\leq 2t\leq \frac{2}{c}\log_2 \log_2 n.$$
It follows that
\begin{eqnarray*}
\gamma_{\vec{m}}(n) & \geq & m_{t-1} 2^{m_t - m_{t-1}}\geq 2^{\lfloor \beta^{t-1} \rfloor} \\ & \geq & \frac{1}{2} 2^{\beta^{t-1}}\geq \frac{1}{2} 2^{\frac{\beta-1}{\beta}\log_2 n - \frac{2}{c}\log_2\log_2 n} \geq n^{\frac{\beta-1}{\beta}}\cdot \log_2^{-\frac{3}{c}} n
\end{eqnarray*} (recall that $t\gg_\beta 0$).
On the other hand,
\begin{eqnarray*}
\gamma_{\vec{m}}(N) & = & \gamma_{\vec{m}}(2^{m_t}) = \sum_{k=1}^{t-1} m_k 2^{m_{k+1}-m_k} \\ & \leq & m_{t-1}(2^{m_2-m_1}+\cdots+2^{m_t-m_{t-1}}) \\ & \leq & 2m_{t-1} 2^{m_t - m_{t-1}} \leq 2\log_2 N \cdot  2^{ \beta^{t-1} } \\ & \leq & 2\log_2 N \cdot 2^{\frac{\beta - 1}{\beta}m_t+\frac{2(\beta-1)}{\beta}t} \\ & \leq & 2 \log_2^{1+\frac{2(\beta-1)}{c\beta}} N \cdot N^{\frac{\beta-1}{\beta}}.
\end{eqnarray*}
It follows that
\[
\limsup_{n\rightarrow \infty} \log_n\gamma_{\vec{m}}(n) = \frac{\beta-1}{\beta} = \alpha.
\]
Finally, by Lemma \ref{lem:speed gamma},
\begin{eqnarray*}
\alpha = \limsup_{n\rightarrow \infty} \log_n\gamma_{\vec{m}}(n) & \leq & \limsup_{n\rightarrow \infty} \log_n \E|R_n| \\ & \leq & \max\Big\{\limsup_{n\rightarrow \infty} \log_n\gamma_{\vec{m}}(n),\frac{\beta-1}{\beta}\Big\} = \alpha.
\end{eqnarray*}
The proof is completed.
\end{proof}

\section{Arbitrarily slow speed functions}\label{sec6}

In this section, we prove Theorem B. Namely, we show that there is no gap between constant and non-constant speed functions of random walks on semigroups. To do this, we analyze the speed of random walks on a certain quotient of $\Sm$.

The quotient we consider is the following:

\begin{cons}
    Fix an increasing sequence $\vec{m}=\set{m_i}_{i=1}^{\infty}$ of positive integers such that $m_1=1$. We use the notations $\prec$ and $\preceq$ as in \Csref{cons:Sm}. We also write $j\sim j'$ if $j\preceq j'\preceq j$, i.e., if $m_k\le j,j'<m_{k+1}$ for some $k\in\N$. We define the semigroup
    \[
        \SSm = \sg{x,y \mid x^2=x, \forall i,j\ge 1: xy^jxy^{j'}x=xy^jx \text{ if } j'\preceq j}.
    \]
\end{cons}

The difference between $\Sm$ and $\SSm$ comes when considering the product of $xy^jx$ with $xy^{j'}x$ when $j\sim j'$. Indeed, in $\Sm$, the subsemigroup $\sg{xy^{m_k}x,\dots,xy^{m_{k+1}-1}x}$ is free for all $k$, whereas the same semigroup in $\SSm$ satisfies the semigroup law $ab=a$.

We provide the following upper bound for the speed of a simple random walk on $\SSm$:

\begin{prop}\label{prop:speed-SSm}
    Let $R_n$ be a simple random walk on $\SSm$ with respect to the generating set $\set{x,y}$. Then
    \[
        c_0\sum_{k:m_k\le\log_2 n}(m_k+1)\le\E\left|R_n\right| \le \sum_{k:m_k\le n-2} (m_k+2)\min\set{1,\frac{n}{2^{m_k}}}+3.
    \]
\end{prop}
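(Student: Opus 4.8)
The plan is to adapt the length decomposition \eqref{eq:length-Rn} and the per-block estimates of Section~\ref{sec4} to the quotient $\SSm$, exploiting the key structural simplification: in $\SSm$ the subsemigroup $\sg{xy^{m_k}x,\dots,xy^{m_{k+1}-1}x}$ satisfies $ab=a$, so at most \emph{one} block of the form $xy^jx$ with $m_k\le j<m_{k+1}$ survives in the reduced form of any element. Writing $R_n=y^{N_0}\left(\prod_k \pi_k(R_n)\right)x^{\delta}y^{N_\infty}$ as before, we now have $\left|\pi_k(R_n)\right|\le m_{k+1}+1$ always (at most one surviving syllable $xy^j$ with $j<m_{k+1}$, plus the possible closing $x$ absorbed into $\delta$), but more importantly $\left|\pi_k(R_n)\right|=0$ unless \emph{some} subword $xy^jx$ with $m_k\le j<m_{k+1}$ actually occurs in $R_n$ before reduction. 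So the first step is to record that $\left|\pi_k(R_n)\right|\le (m_{k+1}+1)\ind{E_k}$, where $E_k$ is the event that such a subword appears among $X_1\cdots X_n$; this is the analog of the reasoning in \Pref{prop:up-bound-pi-k-inds}, but now we only pay once per block.

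For the upper bound, I would estimate $\Pr(E_k)$ in two ways. A crude union bound over the $\le n$ starting positions, each contributing probability $\le 2^{-m_k-1}$ for the pattern $xy^{j}x$ with $j\ge m_k$ to begin there, gives $\Pr(E_k)\le n\cdot 2^{-m_k}$; trivially $\Pr(E_k)\le 1$. Hence $\E\left|\pi_k(R_n)\right|\le (m_{k+1}+1)\min\set{1,n/2^{m_k}}$. One must be slightly careful that the stated bound has $(m_k+2)$, not $(m_k+1)$ or $(m_{k+1}+1)$, as the prefactor — so rather than bounding $\left|\pi_k(R_n)\right|$ by the worst-case syllable length $m_{k+1}+1$, I would instead bound $\E\left|\pi_k(R_n)\right|$ by $\E\big[(\text{length of the first surviving }xy^j\text{ block}+1)\ind{E_k}\big]$, and note that conditioned on $E_k$ the relevant exponent $j$ is (stochastically dominated by) a geometric variable conditioned to be $\ge m_k$, whose expectation is $m_k+2$; this is exactly the computation $\E[\left|Y_1\right|\mid \left|Y_1\right|\ge m_k+1]=m_k+2$ already carried out inside the proof of \Pref{prop:up-bound-pi-k}. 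Combining with $\E[N_0]+\E[N_\infty]+\E[\delta]\le 3$ from \Pref{prop:N-0-infty} and summing over $k$ with $m_k\le n-2$ (larger $k$ contribute nothing, since $xy^jx$ has length $>n$) yields the right-hand inequality.

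For the lower bound, I would reuse \Pref{prop:low-bound-N-k} essentially verbatim: its proof shows $\E[N_k(R_n)]\ge 2^{-m_k-3}\min\set{2^{m_{k+1}},n}$, and the only fact about $\Sm$ used there is that a surviving occurrence of $xy^jx$ (with a clean $y^{m_{k+1}}$-free prefix) contributes to $N_k$. In $\SSm$ the same occurrence still forces $\pi_k(R_n)\ne 1$, hence $\left|\pi_k(R_n)\right|\ge m_k+1$. Therefore $\E\left|\pi_k(R_n)\right|\ge (m_k+1)\Pr(E_k)\ge (m_k+1)\cdot c\min\set{1,n/2^{m_k}}$ for a universal $c>0$ — here I'd extract from the argument of \Pref{prop:low-bound-N-k} the cleaner statement $\Pr(E_k)\ge c$ whenever $2^{m_k}\le n$ (take $i=1$ in that proof: $\Pr(A_1)\ge 2^{-m_k-2}$ is too small, so instead sum over the first $\asymp 2^{m_{k+1}}$ positions, or simply observe that among the first $n$ steps the expected number of ``fresh'' occurrences is $\gtrsim \min\set{1,n/2^{m_k}}$ and each occurrence guarantees $E_k$). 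Restricting the sum to $k$ with $m_k\le\log_2 n$, where $\min\set{1,n/2^{m_k}}=1$, gives $\E\left|R_n\right|\ge\sum_k\E\left|\pi_k(R_n)\right|\ge c_0\sum_{k:m_k\le\log_2 n}(m_k+1)$.

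The main obstacle is the bookkeeping around ``how much of a block survives'': unlike in $\Sm$, where $\pi_k(R_n)$ can be long and one needs the delicate stopping-time computation of \Pref{prop:up-bound-pi-k}, here the subtlety is the opposite — one must be sure that the single surviving syllable is genuinely the \emph{first} occurrence (so that its exponent is an unconditioned-then-truncated geometric, giving the clean $m_k+2$), and that overlaps of the pattern across block boundaries, or the interaction of the $x^2\to x$ rule with the $ab=a$ collapse, do not spoil the independence used to factor $\Pr(B_i\mid X_i=x)\Pr(C_i\mid X_i=x)$. I expect this to go through with the same Markov-property argument as in Section~\ref{sec4}, since the reductions are local, but it is the step that needs the most care to state correctly.
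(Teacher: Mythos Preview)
Your upper-bound argument is essentially the paper's, with cosmetic differences. The paper gets $\E\left|\overline{\pi}_k(R_n)\right|\le m_k+2$ by passing to $R_\infty$ and a stopping-time computation (their \Pref{prop:SSm-pi-k-up}), and gets $\E\left|\overline{\pi}_k(R_n)\right|\le \frac{(m_k+2)n}{2^{m_k+1}}$ by simply noting that $\SSm$ is a quotient of $\Sm$, so \Pref{prop:up-bound-pi-k-inds} applies verbatim. Your version (conditioning on $\overline{\pi}_k\ne 1$ and bounding the conditional exponent by a shifted geometric) is equivalent, though the ``quotient'' observation is slicker than redoing a union bound on $\Pr(E_k)$.

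The lower bound has a genuine gap. What you need is the \emph{probability} bound $\Pr(\overline{\pi}_k(R_n)\ne 1)\ge c_0$ when $m_k\le\log_2 n$. Reusing \Pref{prop:low-bound-N-k} does not give this: that proposition yields $\E\bigl[\sum_i \ind{A_i}\bigr]\gtrsim \min\{2^{m_{k+1}},n\}/2^{m_k}$, an \emph{expectation} lower bound on the number of fresh occurrences, and your proposed fixes (``sum over the first $\asymp 2^{m_{k+1}}$ positions'', ``each occurrence guarantees $E_k$'') only reproduce that expectation bound --- they do not convert it into $\Pr\bigl(\bigcup_i A_i\bigr)\ge c_0$. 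A large expectation does not by itself prevent the occurrences from being concentrated on a rare event. There is also a smaller slip: your event $E_k$ (some $xy^jx$ with $m_k\le j<m_{k+1}$ appears anywhere in $R_n$) does \emph{not} imply $\overline{\pi}_k(R_n)\ne 1$, since the occurrence may be preceded by a longer $y$-run and hence absorbed; you need the clean-prefix events $A_i$ throughout, not $E_k$.

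The paper closes this gap with a second-moment (Paley--Zygmund) argument, packaged as \Pref{prop:subwords} in the appendix: in a uniform word of length $n\ge 2^{m_k}$, the pattern $xy^{m_k}$ appears with probability bounded below by an absolute constant, and conditioning on the first appearance, the next letter is $x$ with probability~$\tfrac12$, which forces $\overline{\pi}_k\ne 1$ (their \Pref{prop:SSm-pi-k-low}). Any correct completion of your outline will need some such variance control; the first-moment information in \Pref{prop:low-bound-N-k} alone is not enough.
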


The proof of the proposition follows similar lines as the calculations in \Sref{sec4}. We first note that, as in \Pref{prop:reduce}, every element $a\in\SSm$ has a reduced form
\[
    a=y^{j_0}xy^{j_1}x\cdots y^{j_{t-1}}xy^{j_t},
\]
where $j_0,j_t\ge 0$, $j_1,\dots,j_{t-1}\ge 0$, and $j_1\prec j_2\prec\cdots\prec j_{t-1}$ (rather than $\preceq$ in $\Sm$). We write the reduced form of the simple random walk $R_n$ as
\begin{equation}\label{eq:reduced-form-Rn-SSm}
    R_n = y^{N_0(R_n)}\left(\prod_{k=1}^{\infty}\overline{\pi}_k(R_n)\right)x^{\delta(R_n)}y^{N_{\infty}(R_n)},
\end{equation}
where $\overline{\pi}_k(R_n)=xy^{j}$ is an element such that $m_k\le j<m_{k+1}$, or $\overline{\pi}_k(R_n)=1$ if no subword of the form $xy^jx$ for $m_k\le j<m_{k+1}$ appears in the reduced form of $R_n$. Also, $\delta(R_n)$ is the indicator of whether there is some $k$ such that $\overline{\pi}_k(R_n)\ne 1$. We have again
\[
    \left|R_n\right| = \sum_{k=1}^{\infty}\left|\overline{\pi}_k(R_n)\right| + N_0(R_n) + N_{\infty}(R_n) + \delta(R_n),
\]
so
\begin{equation}\label{eq:speed-SSm}
    \E\left|R_n\right| = \sum_{k=1}^{\infty}\E\left|\overline{\pi}_k(R_n)\right| + \E[N_0(R_n)] + \E[N_{\infty}(R_n)] + \E[\delta(R_n)].
\end{equation}
\Pref{prop:N-0-infty} is the same for random walks on $\Sm$ and $\SSm$, so we focus on $\E\left|\overline{\pi}_k(R_n)\right|$.

\begin{prop}\label{prop:SSm-pi-k-up}
    For any $k\in\N$ we have $\E\left|\overline{\pi}_k(R_n)\right|\le m_k+2$.
\end{prop}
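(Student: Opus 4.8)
The plan is to bound $\E|\overline{\pi}_k(R_n)|$ by conditioning on the event that $\overline{\pi}_k(R_n)\neq 1$, i.e.\ that some subword $xy^jx$ with $m_k\le j<m_{k+1}$ actually survives in the reduced form of $R_n$, and then estimating the conditional expectation of the length $|\overline{\pi}_k(R_n)|=j+1$ given this event. The key structural simplification relative to Section~\ref{sec4} is that in $\SSm$, the block $\overline{\pi}_k(R_n)$ is a \emph{single} syllable $xy^j$ rather than a product of many syllables: once any word $xy^{j'}x$ with $j'\sim j$ appears, all later such words are absorbed, so $\overline{\pi}_k(R_n)$ records only the \emph{first} occurring block in the $k$-th class. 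So the whole estimate reduces to understanding the exponent $j$ of that first block.

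First I would identify $\overline{\pi}_k(R_n)$ concretely: writing $R_n$ as a prefix of $R_\infty=Y_1Y_2\cdots$ with $Y_i$ i.i.d.\ of the form $y^{\ell}x$ with $\Pr(|Y_i|=\ell+1)=2^{-\ell-1}$ (as in Remark~\ref{rem:R-infty}), the block $\overline{\pi}_k$ is governed by the first index $i$ (within the relevant prefix) at which $|Y_i|-1\in[m_k,m_{k+1})$, and then the exponent equals $|Y_i|-1$ for that $i$ — with the caveat that one must also check that this block is not itself absorbed by an earlier block in the same class, but by the ``first occurrence'' observation above it is exactly the earliest such $i$ that determines $\overline{\pi}_k$. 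Then $|\overline{\pi}_k(R_n)| \le |\overline{\pi}_k(R_\infty)|$ by the subword relation (Remark~\ref{rem:R-infty}), so it suffices to bound $\E|\overline{\pi}_k(R_\infty)|$.

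Next I would compute $\E|\overline{\pi}_k(R_\infty)|$. Conditioning on $\overline{\pi}_k(R_\infty)\neq 1$, the exponent is distributed as $|Y_1|-1$ conditioned on $|Y_1|-1\in[m_k,m_{k+1})$, whose expectation (plus $1$ for the leading $x$) is bounded by the same geometric-tail computation used in Proposition~\ref{prop:up-bound-pi-k}: namely $\E[|Y_1| \mid m_k+1\le |Y_1|\le m_{k+1}] \le m_k+2$. Since $|\overline{\pi}_k(R_\infty)| \le |Y_1|$ in the conditional world (the block is literally $xy^{|Y_i|-1}$) and equals $0$ otherwise, multiplying by the probability $\Pr(\overline{\pi}_k(R_\infty)\neq 1)\le 1$ gives $\E|\overline{\pi}_k(R_\infty)| \le m_k+2$, and hence $\E|\overline{\pi}_k(R_n)|\le m_k+2$ as claimed.

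The main obstacle I anticipate is the bookkeeping needed to make the ``first occurrence governs $\overline{\pi}_k$'' claim rigorous: one has to argue that no block $xy^{j'}x$ with $j'$ in a \emph{different} class but with $j'\prec j$ could interfere (it cannot, since such a block lies in an earlier class and only affects blocks with larger exponents, and absorption in $\SSm$ is by $j'\preceq j$ which within-class collapses to a single syllable while cross-class absorption respects the block ordering), and that conditioning on ``$X_t=x$ and $W_t\in[m_k,m_{k+1})$'' is compatible with the independence of $W_t$ from $X_t$ used via the Markov property — exactly the maneuver in Proposition~\ref{prop:up-bound-pi-k-inds}. Once that is in place the calculation is a direct transcription of the argument in Proposition~\ref{prop:up-bound-pi-k}.
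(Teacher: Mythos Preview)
Your proposal is correct and follows essentially the same route as the paper: pass to $R_\infty$, recognize that $\overline{\pi}_k(R_\infty)$ is at most a single syllable $xy^j$, and bound its expected length via the conditional-geometric estimate $\E[|Y_1|\mid m_k+1\le|Y_1|\le m_{k+1}]\le m_k+2$ from Proposition~\ref{prop:up-bound-pi-k}, then multiply by $\Pr(\overline{\pi}_k\neq 1)\le 1$. The paper formalizes this through the stopping time $\tau=\inf\{j:m_k<|Y_j|\le m_{k+1},\ \forall i<j:|Y_i|\le m_k\}$ and the identity $|\overline{\pi}_k(R_\infty)|=\sum_j|Y_j|\ind{\{\tau=j\}}$, which is exactly your conditioning on $\{\overline{\pi}_k\neq 1\}$ unpacked; one small clarification is that the event $\overline{\pi}_k=1$ arises when a block of class $>k$ appears before any class-$k$ block (rather than the same- or lower-class interference your obstacle paragraph focuses on), but since that case contributes zero it does not affect your bound.
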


\begin{proof}
    Similarly to \Pref{prop:up-bound-pi-k}, we may extend $R_n$ to an infinite word $R_{\infty}$, which has a well-defined reduced form, and thus $\E\left|\overline{\pi}_k(R_n)\right|\le\E\left|\overline{\pi}_k(R_{\infty})\right|$. Write $R_{\infty}=Y_1Y_2\cdots$, where $Y_1,Y_2,\dots$ are i.i.d.\ random variables with distribution $\Pr(Y_1=y^jx)=2^{-j-1}$, and let $\tau=\inf\set{j\mid m_k<\left|Y_j\right|\le m_k+1,\forall i<j:\left|Y_i\right|\le m_k}$ (which might be infinite). Then
    \[
        \left|\overline{\pi}_k(R_{\infty})\right| = \sum_{j=1}^{\infty}\left|Y_j\right|\ind{\set{\tau=j}}.
    \]
    Therefore
    \[
        \E\left|\overline{\pi}_k(R_{\infty})\right| = \sum_{j=1}^{\infty}\E\left[\left|Y_j\right|\ind{\set{\tau=j}}\right]=\sum_{j=1}^{\infty}\E\left[\left|Y_j\right|\mid \tau=j\right]\Pr(\tau=j).
    \]
    By the independence of $Y_1,\dots,Y_j$,
    \[
        \E\left|\overline{\pi}_k(R_{\infty})\right| = \sum_{j=1}^{\infty}\E\left[\left|Y_j\right|\mid m_k+1\le\left|Y_j\right|\le m_{k+1}\right]\Pr(\tau=j).
    \]
    We saw in the proof of \Pref{prop:up-bound-pi-k} that $\E\left[\left|Y_j\right|\mid m_k+1\le\left|Y_j\right|\le m_{k+1}\right]\le m_k+2$, and thus
    \[
        \E\left|\overline{\pi}_k(R_{\infty})\right| \le \sum_{j=1}^{\infty}(m_k+2)\Pr(\tau=j)\le m_k+2.
    \]
\end{proof}

\begin{prop}\label{prop:SSm-pi-k-low}
    There exists a universal constant $c_0>0$ such that, for any $k\in\N$ such that $m_k\le \log_2 n$, we have $\Pr(\overline{\pi}_k(R_n)\ne 1)\ge c_0$.
\end{prop}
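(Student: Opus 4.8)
\textbf{Proof proposal for \Pref{prop:SSm-pi-k-low}.}

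The plan is to show that with probability bounded below by a universal constant, the random walk $R_n = X_1\cdots X_n$ contains, somewhere among its first $n$ letters, a subword of the form $xy^jx$ with $m_k\le j<m_{k+1}$ that is \emph{not} destroyed by any reduction. The key point is that reductions only delete a block $xy^{j'}x$ when it sits to the right of some $xy^jx$ with $j'\preceq j$; so the \emph{leftmost} block whose exponent lies in $[m_k,m_{k+1})$ can only be absorbed by a block further to its left whose exponent is $\succeq$ it, i.e.\ at least $m_k$. Thus it suffices to produce, with positive probability, a position $i\le n - m_k - 2$ such that $X_i\cdots X_{i+j+1}=xy^jx$ for some $m_k\le j<m_{k+1}$, while \emph{no} earlier block $xy^{j''}x$ with $j''\ge m_k$ occurs in $X_1\cdots X_{i-1}$; such a block survives in the reduced form, forcing $\overline{\pi}_k(R_n)\ne 1$ (in fact its exponent then equals $j$). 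This mirrors the event $A_i$ used in the proof of \Pref{prop:low-bound-N-k}, and I would reuse that event structure.

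First I would fix attention on the window of the first $L\coloneqq\min\{2^{m_k+1},n\}$ positions (note $m_k\le\log_2 n$ gives $2^{m_k}\le n$, so $L\asymp 2^{m_k}$, and also $m_k+2\le L$ so the window is long enough to contain a block). For $1\le i\le L - m_k - 1$ let $C_i$ be the event ``$X_i=x$ and $X_{i+1}\cdots X_{i+j+1}=xy^jx$ for some $m_k\le j<m_{k+1}$,'' and let $B_i$ be the event ``no subword $xy^{j''}x$ with $j''\ge m_k$ occurs in $X_1\cdots X_{i-1}$.'' A single occurrence of such a block starting at a fixed position has probability $\ge 2^{-m_k-2}$ (put down $x$, then at least $m_k$ copies of $y$, then an $x$ — actually, to get $C_i$ one wants $X_i=x$, then $y^{m_k}$, then $x$, which has probability $2^{-m_k-2}$, and this already lies in $[m_k,m_{k+1})$ since if the first letter after the $y$'s is $x$ the exponent is exactly $m_k$; if not we may need a small case split, but in any event $\Pr(C_i\mid X_i=x)\ge 2^{-m_k-1}$). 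For $B_i$, the expected number of occurrences of a block $xy^{j''}x$ with $j''\ge m_k$ starting among the first $i-1$ positions is at most $(i-1)\cdot 2^{-m_k-2}$ (the event ``$x$, then $\ge m_k$ $y$'s, then $x$'' has probability $2^{-m_k-2}$ at each position), which is $\le 2^{-1}$ once $i\le L\le 2^{m_k+1}$, so by Markov $\Pr(B_i)\ge \tfrac12$; conditioning on $X_i=x$ does not affect $B_i$ (it depends only on $X_1,\dots,X_{i-1}$). As in the cited proof, $B_i$ and $C_i$ are conditionally independent given $X_i=x$, since $C_i$ depends only on $X_{i+1},\dots$. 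Hence $\Pr(B_i\cap C_i)\ge \tfrac12\cdot\tfrac12\cdot 2^{-m_k-2}=2^{-m_k-4}$ for each such $i$.

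The events $\{B_i\cap C_i\}$ are not disjoint, but $\{B_i\cap C_i\}$ for distinct $i$ are mutually exclusive once we insist $B_i$ records the \emph{first} such block: more cleanly, let $A_i = B_i\cap C_i$ and observe that on $A_i$ the leftmost block with exponent $\ge m_k$ begins exactly at position $i$, so the $A_i$ are pairwise disjoint. Then
\[
\Pr(\overline{\pi}_k(R_n)\ne 1)\ \ge\ \Pr\Big(\bigcup_{i=1}^{L-m_k-1}A_i\Big)\ =\ \sum_{i=1}^{L-m_k-1}\Pr(A_i)\ \ge\ (L-m_k-1)\,2^{-m_k-4}.
\]
Since $L=\min\{2^{m_k+1},n\}\ge 2^{m_k}$ (as $m_k\le\log_2 n$) and $m_k+1\le 2^{m_k-1}$ for all $m_k\ge 1$, we get $L-m_k-1\ge 2^{m_k-1}$, whence $\Pr(\overline{\pi}_k(R_n)\ne 1)\ge 2^{m_k-1}\cdot 2^{-m_k-4}=2^{-5}=:c_0$, a universal constant.

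The main obstacle is the combinatorial claim that such a ``leftmost block with exponent $\ge m_k$'' is never erased by the reduction procedure and contributes to $\overline{\pi}_k(R_n)$ — one must check, using the description of reduced forms in $\SSm$ (exponents of the inner $x\cdots x$ blocks strictly $\prec$-increasing) and \Rref{rem:diamond-alg}, that after reducing, there remains a block $xy^{j}x$ with $m_k\le j<m_{k+1}$, i.e.\ that this first block is neither absorbed into something to its left (impossible, nothing to its left has exponent $\ge m_k$, hence nothing $\succeq$ it) nor merged with a later block in a way that pushes its exponent out of $[m_k,m_{k+1})$ (a later absorbing block has exponent $\succeq j$, hence $\ge m_k$, contradicting leftmost-ness — so in fact this block is the one that does the absorbing, and its exponent is unchanged). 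Handling the boundary bookkeeping (the trailing $y^{N_\infty}$, the case where the block abuts the end of $R_n$) is routine and absorbed into the $-m_k-1$ slack in the window length.
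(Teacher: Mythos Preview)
Your approach is very close to the paper's \Pref{prop:low-bound-N-k}, and the overall strategy is sound, but the precise definition of $B_i$ creates a genuine gap that breaks both the implication $A_i\Rightarrow\overline{\pi}_k(R_n)\ne 1$ and the disjointness of the $A_i$.

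The issue is that your $B_i$ only forbids \emph{complete} patterns $xy^{j''}x$ inside $X_1\cdots X_{i-1}$. It does not forbid a long block that \emph{ends exactly at position $i$}: if the previous $x$ sits at some $p\le i-m_{k+1}-1$ and $X_{p+1}=\cdots=X_{i-1}=y$, then the block $X_p\cdots X_i=xy^{i-p-1}x$ has exponent $\ge m_{k+1}$, is not a subword of $X_1\cdots X_{i-1}$, and will absorb the block at $i$. Concretely, take $m_k=2$, $m_{k+1}=3$ and the word $xyyyxyyx$: then $B_5$ and $C_5$ both hold (there is no complete block at all in $X_1\cdots X_4=xyyy$, and $X_5\cdots X_8=xy^2x$), yet the reduced form is $xy^3x$ and $\overline{\pi}_k=1$. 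The same phenomenon also breaks disjointness: with $m_k=2$, $m_{k+1}=10$ and the word $xyxyyxyyx$, both $A_3$ and $A_6$ hold. Since you need disjointness to pass from $\Pr(\bigcup A_i)$ to $\sum\Pr(A_i)$ (the inequality goes the wrong way otherwise), the final chain of inequalities does not stand as written. There is also a minor arithmetic slip: the probability of ``$x$, then $\ge m_k$ $y$'s, then $x$'' at a fixed position is $2^{-m_k-1}$, not $2^{-m_k-2}$, which combined with your window $L=2^{m_k+1}$ gives expected count $\le 1$ and no Markov bound; and the inequality $m_k+1\le 2^{m_k-1}$ fails for $m_k\in\{1,2\}$.

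All of this is easily repaired by strengthening $B_i$ to ``no subword $xy^{m_k}$ in $X_1\cdots X_{i-1}$'' (the incomplete pattern, as in \Pref{prop:low-bound-N-k}). Since $X_i=x$, any block of exponent $\ge m_k$ starting at $p<i$ must end at some position $\le i$, forcing $p+m_k\le i-1$, so its prefix $xy^{m_k}$ lies in $X_1\cdots X_{i-1}$; thus the strengthened $B_i$ genuinely makes the block at $i$ the leftmost with exponent $\ge m_k$, restoring both the implication and the disjointness. Shrinking the window to $L=2^{m_k}$ then gives expected count $<\tfrac12$ and the rest of your computation goes through with a slightly different constant. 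The paper's own proof takes a shorter route: it invokes \Pref{prop:subwords} to find the \emph{first} occurrence of $xy^{m_k}$ within the first $2^{m_k}$ letters with uniformly positive probability, then observes that the next letter is $x$ with probability $\tfrac12$; working with the first occurrence automatically guarantees that no $xy^{m_{k+1}}$ precedes it, sidestepping the bookkeeping entirely.
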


\begin{proof}
    Consider $R_n=X_1\cdots X_n$ as a word of length $n$. Then $\overline{\pi}_k(R_n)=1$ if and only if no subword of the form $xy^jx$ for $m_k\le j<m_{k+1}$ appears before the first appearance of $xy^{m_{k+1}}$ in $R_n$. Write $m=\min\set{n,2^{m_k}}$. 
    By \Pref{prop:subwords}, the probability that $xy^{m_k}$ appears as a subword in $X_1\cdots X_m$ is bounded from below by some absolute constant $c>0$. After the first appearance of $xy^{m_k}$, the probability that the next letter is $x$ is $\frac{1}{2}$. This shows that the probability that $xy^{m_k}x$ appears before any $xy^j$ for $j\ge m_{k+1}$ is bounded from below by $\frac{1}{2}c$, proving the proposition.
\end{proof}

We can now prove the speed estimates on $R_n$.

\begin{proof}[Proof of \Pref{prop:speed-SSm}]
    Combining \Pref{prop:SSm-pi-k-up} and \Pref{prop:up-bound-pi-k-inds}, which is still valid since $\SSm$ is a quotient of $\Sm$, we have
    \[
        \E\left|\overline{\pi}_k(R_n)\right| \le (m_k+2)\min\set{1,\frac{n}{2^{m_k}}}
    \]
    for all $k\in\N$ such that $m_k\le n-2$. The claimed upper bound now follows from \eqref{eq:speed-SSm}.

    For the lower bound, note that
    \[
        \E\left|\overline{\pi}_k(R_n)\right| \ge (m_k+1)\Pr(\overline{\pi}_k(R_n)\ne 1)\ge c_0(m_k+1)
    \]
    for all $k$ with $m_k\le\log_2 n$, where the last inequality follows from \Pref{prop:SSm-pi-k-low}. Therefore, by \eqref{eq:speed-SSm},
    \[
        \E\left|R_n\right|\ge\sum_{k:m_k\le\log_2 n}\E\left|\overline{\pi}_k(R_n)\right| \ge c_0\sum_{k:m_k\le\log_2 n}(m_k+1)
    \]
    as required.
\end{proof}

We return to the proof of Theorem B. The key observation is that the upper bound of \Pref{prop:speed-SSm} is constant on any interval of the form $[2^{m_k},m_{k+1}+1)$. Therefore, if the sequence $\set{m_k}$ grows very fast, the speed of $R_n$ will grow slowly.

\begin{proof}[Proof of Theorem B]
    Let $\omega\colon\N\to[1,\infty)$ be a function such that $\omega(n)\xrightarrow{n\to\infty}\infty$. We construct a sequence $\vec{m}=\set{m_k}$ inductively as follows: we set $m_1=1$, and for each $k\ge 2$ we let $m_k>2^{m_{k-1}}$ be the first integer such that
    \[
        \omega(m_k) > \sum_{i=1}^{k-1} (m_i+2)+3.
    \]

    Consider the random walk $R_n$ on $\SSm$. By \Pref{prop:speed-SSm}, $\E\left|R_n\right|$ is not bounded, and for all $k\ge 2$ we have
    \[
        \E\left|R_{m_k}\right| \le \sum_{i=1}^{k-1}(m_i+2)\min\set{1,\frac{m_k}{2^{m_i}}}+3 = \sum_{i=1}^{k-1}(m_i+2)+3  < \omega(m_k)
    \]
    proving the theorem.
\end{proof}

\section{A lower bound on the distance of random walks}\label{sec7}

As another angle to study the rate of escape of random walks on semigroups, we ask how close can they be to their starting point. We show that almost surely there is a subsequence of the path of the random walk which ``diverges to infinity'', provided that the semigroup does not contain finite right ideals. The main result of this section is proved for the more general case of random walks on rooted directed graphs, and we rephrase it in the language of random walks on semigroups at the end of the section.

Let $(G,o)$ be a rooted directed graph, and let $R_n$ be a simple random walk on~$G$. If $G$ contains a finite strongly connected component, then there is a positive probability that $R_n$ will reach that component after finitely many steps, and then~$R_n$ will be trapped there from that time onwards. To detect such components, we introduce the following function:

\begin{defn}
    Let $G=(V,E)$ be a rooted directed graph with root $o$. We define
    \[
        \ff_G(v,n) = \max_{w\in B_G(v,n)}\dist(o,w)
    \]
    for each $v\in V$, and the \textbf{rooted ball spread} of $G$ as
    \[
        \ff_G(n) = \min_{v\in V}\ff_G(n,v).
    \]
    In words, $\ff_G(n)\ge r$ if any ball $B_G(v,n)$ of radius $n$ in $G$ contains a vertex $w$ of distance at least $r$ from the root $o$.
\end{defn}

We now describe some properties of the function $\ff_G$.

\begin{rem}
    \begin{enumerate}
        \item For any $n\in\N$ we have $\ff_G(n)\le\ff_G(o,n)\le n$.
        \item For any $v\in V$ and $n\in\N$ we have $\ff_G(v,n)\ge\dist(o,v)$.
    \end{enumerate}
\end{rem}

\begin{rem} \label{rem:undirected}
    If $G=(V,E)$ is undirected, then $\ff_G(n)\ge\frac{n}{2}$ for all $n$. Indeed, we will show that $\ff_G(v,n)\ge\frac{n}{2}$ for all $v\in V$. On the one hand, if $\dist(o,v)\ge\frac{n}{2}$, then $\ff_G(v,n)\ge\dist(o,v)\ge\frac{n}{2}$. On the other hand, if $\dist(o,v)<\frac{n}{2}$, take $v'\in V$ such that $\dist(o,v')=\ceil{\frac{n}{2}}$. Therefore $\dist(v,v')\le n$, and so $\ff_G(v,n)\ge\dist(o,v)\ge\frac{n}{2}$.

    We remark that if $G$ has no dead ends, then $\ff_G(n)=n$, since we can then choose the vertex $v'$ so that $\dist(v,v')=\dist(o,v)+\dist(v,v')$. For this reason, for undirected $G$ we have $\ff_G(n)=n$ for all $n$ if and only $G$ has no dead ends.
\end{rem}

Note that this argument clearly fails for directed graphs, since if a directed graph contains a dead end, there might be no way of going back to the root. However, as mentioned above, we will now show that the function $\ff_G$ detects finite strongly connected components in directed graphs:

\begin{prop}\label{prop:f-bound}
    Let $G=(V,E)$ be a rooted directed graph with root $o$.
    \begin{enumerate}
        \item If $G$ has a finite strongly connected component, then $\ff_G$ is bounded.
        \item If $G$ has no finite strongly connected components, then $\ff_G(\left|B_G(o,n-1)\right|)\ge n$ for all $n\in\N$.
    \end{enumerate}
\end{prop}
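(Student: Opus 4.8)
The plan is to treat the two parts separately, each following directly from the definitions once we unwind what $\ff_G$ measures.

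\textbf{Part (1).} Suppose $C \sub V$ is a finite strongly connected component of $G$. Pick any $v_0 \in C$. I claim that $\ff_G(v_0, n)$ is bounded by a constant independent of $n$, which immediately gives that $\ff_G(n) = \min_v \ff_G(v,n) \le \ff_G(v_0,n)$ is bounded. To see the claim, observe that the only vertices reachable from $v_0$ are those in $C$ together with vertices lying ``downstream'' of $C$ — but since $C$ is a strongly connected \emph{component} (maximal), every vertex $w$ with a path from $v_0$ either lies in $C$ or lies strictly below $C$ in the condensation DAG. The subtlety is that there could be infinitely many such downstream vertices and they could be arbitrarily far from $o$; so bounding $\ff_G(v_0,n)$ naively fails. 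Instead I would exploit that $B_G(v_0,n)$ — the ball of vertices \emph{within distance $n$ of $v_0$} — is finite (all outdegrees are finite, so each ball is finite), hence $\max_{w \in B_G(v_0,n)} \dist(o,w)$ is finite for each fixed $n$; but to get a \emph{uniform} bound I should instead choose $v_0$ more cleverly, or rather use the following: since $C$ is finite and strongly connected, for any $w$ reachable from $C$ we have $\dist(v_0, w) \ge \dist(v_0, w)$, and... Actually the cleanest route: let $v_0 \in C$ be a vertex of $C$ that is closest to $o$, and note that for \emph{every} vertex $w \in C$, $\dist(o,w) \le \dist(o,v_0) + \mathrm{diam}(C) =: B$. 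Now I want $\ff_G(v_0, n) \le B$ for all $n$, which requires that \emph{no} vertex outside $C$ is reachable from $v_0$ — i.e. that $C$ is a \emph{sink} component. That need not hold, so this needs care; the honest fix is to take $C$ to be a sink strongly connected component among the finite ones reachable from $o$ — but such a sink among finite components need not exist either. I would therefore reexamine: the correct statement uses that $G$ has \emph{a} finite strongly connected component $C$; from $o$ there is a path into... no, there need not be a path from $o$ into $C$ in a way that... wait, $G$ is rooted, so there \emph{is} a path from $o$ to every vertex, in particular to every vertex of $C$. Hmm, but that constrains $C$ from above, not below. The main obstacle here is genuinely this: I expect the intended argument is that $\ff_G(v_0,n)$ is bounded because from $v_0 \in C$ one can only reach a \emph{finite} set of vertices within bounded distance — no: one should observe that if $C$ is finite strongly connected then picking $v_0 \in C$, the ball $B_G(v_0, \mathrm{diam}(C))$ already equals $C$ plus its immediate out-neighbourhood, and the point is that $\ff_G(v_0, n)$ for the \emph{specific} minimizing vertex need not blow up; I will work through the condensation DAG carefully to pin this down.

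\textbf{Part (2).} This direction is the clean one. Suppose $G$ has no finite strongly connected components. Fix $n \in \N$ and let $v \in V$ be arbitrary. I want to show $B_G(v, |B_G(o,n-1)|)$ contains a vertex at distance $\ge n$ from $o$. Starting at $v = v_0$, build a path $v_0 \to v_1 \to v_2 \to \cdots$ greedily; because $v$ lies in a strongly connected component which is infinite (or lies on an infinite forward path), and more to the point because no strongly connected component is finite, from $v$ one can reach infinitely many distinct vertices along a path, so in particular a path $v_0, v_1, \dots, v_m$ of length $m = |B_G(o, n-1)|$ visiting $m+1$ distinct vertices — here I use that a forward walk of that length either visits $m+1$ distinct vertices or revisits one, and revisiting one would produce a cycle inside a strongly connected component, which being non-trivial must be infinite, so we can route through it to keep finding new vertices; the upshot is a simple directed path from $v$ of length exactly $|B_G(o,n-1)|$. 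By pigeonhole, among the $|B_G(o,n-1)| + 1$ vertices $v_0, \dots, v_{|B_G(o,n-1)|}$ on this path, at least one, say $v_j$, does not lie in $B_G(o, n-1)$, i.e. $\dist(o, v_j) \ge n$. Since $\dist(v, v_j) \le j \le |B_G(o,n-1)|$, we get $v_j \in B_G(v, |B_G(o,n-1)|)$, hence $\ff_G(v, |B_G(o,n-1)|) \ge n$. As $v$ was arbitrary, $\ff_G(|B_G(o,n-1)|) \ge n$.

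The step I expect to be the main obstacle is making rigorous, in both parts, the interaction between the \emph{maximality} of strongly connected components and \emph{reachability} — specifically, in Part (2), justifying that one can always extend a forward path to a genuinely \emph{new} vertex using the ``no finite strongly connected component'' hypothesis (a revisited vertex forces a directed cycle, which lies in a strongly connected component, which is then infinite, allowing escape to a fresh vertex), and in Part (1), correctly identifying why a finite strongly connected component forces $\ff_G$ to be bounded even though downstream vertices may be far from the root. I would handle the latter by arguing through the finite reachable set from a well-chosen $v_0$ together with the observation that $\ff_G$ takes a minimum over \emph{all} $v$, so it suffices to exhibit one vertex with bounded $\ff_G(v, \cdot)$.
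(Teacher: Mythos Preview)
Your write-up never reaches a conclusion, but the obstacle you keep circling is real, not a failure of nerve. The paper's proof is two lines: pick $v\in A$, then
\[
\ff_G(n)\le\ff_G(v,n)\le\max_{w\in A}\dist(o,w)\le R.
\]
The middle inequality requires $B_G(v,n)\subseteq A$ for all $n$, i.e.\ that $A$ has no outgoing edges --- precisely the ``sink component'' assumption you flagged. With the standard definition of strongly connected component this can fail: take $o\leftrightarrow a\to b_1\to b_2\to\cdots$; then $\{o,a\}$ is a finite strongly connected component but $\ff_G(n)=n$. So either the paper tacitly reads ``strongly connected component'' as an \emph{absorbing} strongly connected set (this is the reading that matches the semigroup translation to ``finite right ideal'' used in Corollary~D and \Pref{prop:finite-ideals}), or the gap you noticed is present there as well. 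Under the absorbing reading the argument is exactly the short one above, and your excursion through the condensation DAG is unnecessary.

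\textbf{Part (2).} This part of your argument is correct and close in spirit to the paper's, though organized differently. The paper splits into cases: if $\dist(o,v)\ge n$ the bound is immediate; otherwise it takes a simple path from $v$ to some vertex with $\dist(o,v_k)=n$, chosen with $k$ minimal, and observes that $v_0,\dots,v_{k-1}$ are distinct elements of $B_G(o,n-1)$, forcing $k\le|B_G(o,n-1)|$. Your version instead builds a simple forward path of length $|B_G(o,n-1)|$ and applies pigeonhole to its $|B_G(o,n-1)|+1$ vertices. Both work; the paper's route avoids having to justify the existence of an arbitrarily long simple forward path. Your justification (``revisiting forces a cycle, hence an infinite SCC, hence escape to a fresh vertex'') is morally right but would be cleaner via K\"onig's lemma: finite outdegrees plus infinitely many reachable vertices yield an infinite simple path from $v$.
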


\begin{proof}
    \begin{enumerate}
        \item Assume that $G$ has a finite strongly connected component $A$. Fix $v\in A$, and take $R>0$ such that $A\sub B_G(o,R)$. Then
        \[
            \ff_G(n) \le \ff_G(v,n) \le \max_{w\in A}\dist(o,w)\le R
        \]
        for all $n\in\N$, showing that $\ff_G$ is bounded.

        \item Assume that $G$ has no finite strongly connected components, and take $v\in V$. We claim that $\ff_G(v,\left|B_G(o,n-1)\right|)\ge n$ for all $n\in\N$. Indeed, if $\dist(o,v)\ge n$, then
        \[
            \ff_G(v,\left|B_G(o,n-1)\right|)\ge\ff_G(v,n)\ge n
        \]
        since $\left|B_G(o,n-1)\right|\ge n$.

        Suppose therefore that $\dist(o,v)<n$. Since $v$ does not lie in a finite strongly connected component, there is a path $v_0=v,v_1,\dots,v_k$ such that $\dist(o,v_k)=n$. We may assume that the path is simple, i.e.\ $v_i\ne v_j$ for $i\ne j$, and that $k$ is the minimal index such that $\dist(o,v_k)=n$. Therefore $v_0,\dots,v_{k-1}$ are distinct elements in $B_G(o,n-1)$, and thus $k\le\left|B_G(o,n-1)\right|$. It follows that
        \[
            \ff_G(v,\left|B_G(o,n-1)\right|)\ge\ff_G(v,k)\ge \dist(o,v_k)=n,
        \]
        concluding the proof.\qedhere
    \end{enumerate}
\end{proof}

\begin{cor}\label{cor:f-bound}
    Suppose that $G=(V,E)$ has no finite strongly connected components. Suppose further that the $\deg^+(v)\le d$ for all $v\in V$. Then $\ff_G(n)\ge\log_d n-1$ for all $n\in\N$.
\end{cor}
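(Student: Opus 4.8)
The plan is to derive this as an immediate consequence of \Pref{prop:f-bound}(2) together with a crude upper bound on ball sizes coming from the bounded outdegree hypothesis. First I would observe that since every vertex has outdegree at most $d$, a simple count of paths of length at most $r$ emanating from the root gives
\[
    \left|B_G(o,r)\right| \le 1 + d + d^2 + \cdots + d^r = \frac{d^{r+1}-1}{d-1} \le d^{r+1}
\]
(and even the weaker bound $\left|B_G(o,r)\right|\le d^{r+1}$ suffices, with a trivial adjustment if $d=1$, in which case $G$ being strongly-connected-component-free forces $G$ to be an infinite ray and the statement is easy to check directly).

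Next, combine this with \Pref{prop:f-bound}(2), which says $\ff_G(\left|B_G(o,n-1)\right|)\ge n$ for all $n\in\N$. Since $\ff_G$ is non-decreasing in its argument (a larger ball $B_G(v,m)$ contains $B_G(v,m')$ for $m'\le m$, so the inner $\max$ can only grow, and hence so can the outer $\min$), and since $\left|B_G(o,n-1)\right|\le d^{n}$, we get
\[
    \ff_G(d^n) \ge \ff_G(\left|B_G(o,n-1)\right|) \ge n
\]
for all $n\in\N$.

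Finally I would convert this into a statement about general $m\in\N$: given $m$, set $n=\floor{\log_d m}$, so that $d^n\le m$ and $n\ge\log_d m-1$. Then, using monotonicity of $\ff_G$ once more,
\[
    \ff_G(m) \ge \ff_G(d^n) \ge n \ge \log_d m - 1,
\]
which is exactly the claim. There is no real obstacle here; the only points requiring a word of care are the monotonicity of $\ff_G$ (worth stating explicitly, as it is used twice) and the degenerate case $d=1$, which should be dispatched separately or simply absorbed by noting that the bound $\left|B_G(o,r)\right|\le r+1\le d^{r+1}$ still holds when $d=1$ since then $d^{r+1}=1$ is false — so in fact for $d=1$ one argues directly that $G$ is a ray and $\ff_G(n)=n\ge\log_1 n-1$ is vacuous/interpreted appropriately, or one simply assumes $d\ge 2$ as is implicit in the intended applications.
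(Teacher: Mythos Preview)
Your proposal is correct and follows essentially the same route as the paper: bound $|B_G(o,R-1)|\le d^R$ via the outdegree hypothesis, apply \Pref{prop:f-bound}(2), and then use monotonicity of $\ff_G$ together with $R=\lfloor\log_d n\rfloor$ to conclude. The paper's proof is marginally terser (it does not comment on monotonicity or the $d=1$ edge case), but the argument is identical in substance.
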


\begin{proof}
    Since $\deg^+(v)\le d$ for all $v\in V$, it follows that $\left|B_G(o,R-1)\right|\le\sum_{i=0}^{R-1} d^i\le d^R$ for all $R\ge 0$. Let $n\in\N$, and take $R$ so that $d^R\le n<d^{R+1}$. Then
    \[
        \ff_G(n) \ge \ff_G(d^R) \ge \ff_G(B_G(o,R-1)) \ge R \ge \log_d n-1
    \]
    as required.
\end{proof}

We now return to studying random walks, and prove Theorem C. I.e., we give a lower bound for the distance of the random walk from the root in terms of the function $\ff_G$.

\begin{thm}\label{thm:low-bound-io}
    Let $G=(V,E)$ be a rooted directed graph without finite strongly connected components, such that $\deg^+(v)\le d$ for all $v\in V$. Let $R_n$ be a simple random walk on $G$. Then, almost surely, $\dist(o,R_t)\ge F(\log_d t+\log_d\log_d t)$ for infinitely many values of $t$.
\end{thm}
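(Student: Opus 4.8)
\textbf{Proof proposal for Theorem \ref{thm:low-bound-io}.}

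The plan is to contrast a pointwise lower bound coming from the combinatorial structure of $G$ (via the rooted ball spread $\ff_G$) with an upper bound on how far the random walk can reach after $t$ steps, and then push the upper-bound threshold just above the lower bound infinitely often using a Borel--Cantelli argument. First I would record the deterministic geometric fact: by \Pref{prop:f-bound}(2), since $G$ has no finite strongly connected components, for every vertex $v$ and every $r\in\N$ the ball $B_G(v,\left|B_G(o,r-1)\right|)$ contains a vertex at distance $\ge r$ from $o$; more to the point, from $R_t$ (wherever it is) there is a directed path of length at most $\left|B_G(o,r-1)\right|\le d^{r}$ reaching distance $r$ from the root. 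Equivalently, starting from $R_t$, within $m:=\ceil{d^{\,r}}$ further steps the walk has probability at least $d^{-m}$ of following that specific path and landing at distance $\ge r=\log_d m-O(1)$ from $o$. The key reformulation: if we set $r=r(t)$ suitably, then conditionally on $R_t$ the event $E_t=\set{\dist(o,R_{t+m_t})\ge r(t)}$ has probability at least $d^{-m_t}$.

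Next I would choose the time scale. We want $\ff_G(\log_d t+\log_d\log_d t)$ in the conclusion, so set $r\approx \log_d t+\log_d\log_d t$ at ``time $t$''; then $m=d^{r}\approx t\log_d t$, i.e.\ we look at increments of length $\asymp t\log_d t$, and the success probability of the good path is $\gtrsim d^{-r}\approx \frac{1}{t\log_d t}$. Now take a lacunary sequence of times $t_k$ — say $t_{k+1}\approx t_k + t_k\log_d t_k$ or a geometric-type sequence — so that the increments $[t_k,t_k+m_{t_k}]$ are disjoint; along this sequence the events $E_{t_k}$ are, after conditioning on $R_{t_k}$, independent across $k$ (each depends only on the walk's moves in its own disjoint time window), with $\Pr(E_{t_k}\mid R_{t_k})\ge d^{-m_{t_k}}\gtrsim \frac{1}{t_k\log_d t_k}$. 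Choosing $t_k$ to grow slowly enough (e.g.\ $t_k\sim k\log k$, or any sequence with $\sum_k \frac{1}{t_k\log_d t_k}=\infty$ while keeping the windows disjoint) makes $\sum_k \Pr(E_{t_k})=\infty$. By the conditional (second) Borel--Cantelli lemma — applied to the filtration generated by the walk, using that $E_{t_k}$ is measurable with respect to the steps in its window and the windows are disjoint so the relevant conditional probabilities are bounded below by a summable-to-infinity quantity — almost surely infinitely many $E_{t_k}$ occur. On each such occurrence, with $t=t_k+m_{t_k}$, we have $\dist(o,R_t)\ge r(t_k)\ge \ff_G(\log_d t_k+\log_d\log_d t_k)\ge \ff_G(\log_d t+\log_d\log_d t)$ up to the additive slack absorbed into the choice of constants and the monotonicity of $\ff_G$; this is exactly the claimed bound for infinitely many $t$.

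The main obstacle I anticipate is twofold. First, making the ``conditionally independent, disjoint windows'' structure precise enough to invoke a Borel--Cantelli-type statement: the cleanest route is Lévy's extension of the second Borel--Cantelli lemma, noting that $\sum_k \Pr(E_{t_k}\mid \mathcal F_{t_k})=\infty$ a.s.\ because each conditional probability is $\ge d^{-m_{t_k}}$ deterministically and these lower bounds sum to infinity by the choice of $t_k$. Second, the bookkeeping to line up $r(t_k)$, $m_{t_k}=d^{r(t_k)}$, and $t=t_k+m_{t_k}$ so that $r(t_k)\ge \log_d t+\log_d\log_d t$ after the $+m_{t_k}$ shift — this forces $m_{t_k}=O(t_k\log_d t_k)$, hence $r(t_k)=\log_d t_k+\log_d\log_d t_k+O(1)$, which is self-consistent and only costs an additive constant, harmlessly absorbed since $\ff_G$ is nondecreasing and the statement is an ``infinitely often'' claim. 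Everything else (finiteness of $B_G(o,r-1)$ from $\deg^+\le d$, existence of the escaping path from $\Pref{prop:f-bound}$) is already available in the excerpt.
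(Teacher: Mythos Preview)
Your proposal has a genuine gap in the probability estimate, and repairing it requires an idea that is missing from your outline.

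You correctly extract from \Pref{prop:f-bound}(2) that from any vertex there is a directed path of length at most $m\approx d^{r}$ reaching distance $r$ from the root. But the probability that the walk follows a \emph{prescribed} path of length $m$ is $d^{-m}$, not $d^{-r}$: you write $d^{-m}$ in your first paragraph and then silently replace it by $d^{-r}$ in the second. With $m\approx t\log_d t$ this single-trial success probability is of order $d^{-t\log_d t}$, and no choice of $\{t_k\}$ makes the Borel--Cantelli sum diverge. (Independently, your suggested sequence $t_k\sim k\log k$ violates your own disjoint-window constraint $t_{k+1}-t_k\ge m_{t_k}\approx t_k\log_d t_k$.) The underlying issue is that you are invoking the \emph{lower bound} on $F_G$ from \Pref{prop:f-bound}(2) rather than the definition of $F_G$ itself: by definition, from any vertex there is a path of length $k$ reaching distance $\ge F_G(k)$, so with $k\approx \log_d t+\log_d\log_d t$ the relevant escape path has length $k$, not $d^{k}$, and a single trial succeeds with probability $d^{-k}\approx \frac{1}{t\log_d t}$. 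Even so, one attempt per disjoint window is still not enough: disjointness of windows of length $k_j\approx \log_d t_j$ forces $t_j\gtrsim j\log j$, and then $\sum_j (t_j\log_d t_j)^{-1}\lesssim \sum_j (j\log^2 j)^{-1}<\infty$.

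The paper supplies the missing idea: instead of a single attempt, look for the ``good'' word of length $k_n=n+\log_d n$ as a subword \emph{anywhere} in the long window $[d^{n-1},d^n)$. Since the escape path depends on the current position, the target word is allowed to depend on the prefix; \Pref{prop:subwords} handles exactly this situation and gives $\Pr(A_n)\gtrsim d^{n}/d^{k_n}=1/n$. Now $\sum 1/n=\infty$, and your conditional Borel--Cantelli reasoning (which is essentially what the paper uses, in a hands-on form) finishes the argument. One further technical point you omit: to encode walk steps bijectively by letters of a $d$-letter alphabet one first adds self-loops so that every out-degree is exactly $d$; the paper checks via a coupling that this regularization is harmless.
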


\begin{proof}
    We first replace $G$ by the directed graph $G'$ obtained from $G$ by adding loops for any vertex $v\in V$, until $\deg^+_{G'}(v)=d$. We will prove the theorem for the simple random walk $R'_n$ on $G'$, which will also prove the theorem for $R_n$. Indeed, note first that $\ff_G=\ff_{G'}$, since we only added loops. We couple $R_n$ with $R'_n$ as follows: sample $R'_0,R'_1,\dots$ as a simple random walk on $G'$, and then define $R_n$ by taking the sequence $R'_0,R'_1,\dots$ and removing any transition where the random walk $R'_n$ used a loop that we added from $G$ to $G'$. Suppose that for some $t\ge 1$ we have $\dist(o,R'_t)\ge\ff_{G'}(\log_d t+\log_d\log_d t)$; then, by the coupling, there is some $s\le t$ such that $R_s=R'_t$, and thus
    \[
        \dist(o,R_s) = \dist(o,R'_t) \ge \ff_{G'}(\log_d t+\log_d\log_d t) \ge \ff_G(\log_d s+\log_d\log_d s).
    \]

    We therefore focus on the random walk $R'_n$ on $G'$. Color the edges of $G'$ by elements of $\Sigma=\set{x_1,\dots,x_d}$, such that from each vertex there is exactly one outgoing edge of each color. Then, any vertex $v\in V$ and a word $\xi\in\Sigma^*$ define a path in $G'$, by starting from $\xi$ and choosing the edges according to the letters of $\xi$. The random walk $R'_n$ then corresponds to sampling a random word $W_n=Y_1\cdots Y_n$ in $\Sigma^n$.

    For each $n\ge 1$, write $t_n=d^n$ and $k_n=n+\log_d n$, and define the event
    \[
        A_n=\set{\exists t:t-{n-1}+k_n\le t< t_n:\dist(o,R'_t)\ge\ff_{G'}(k_n)}.
    \]
    We will prove that $\Pr(A_n\mid R'_{d^{n-1}}=v)\gtrsim\frac{1}{n}$ for all $n\ge 1$ and $v\in V$.

    We define a function $\alpha_{n,v}\colon \Sigma^*\to\Sigma^{k_n}$ as follows: given $\xi\in\Sigma^*$, write by $w\in V$ the end vertex after starting a random walk from $v$ and following the steps of $\xi$. By the definition of $\ff_{G'}$, there is a path $w=w_0,\dots,w_k$ for $k\le k_n$ such that $\dist(w,w_k)=\ff_{G'}(k_n)$. We then define $\alpha_{n,v}(w)$ to be some word of length $k_n$, such that its first $k$ letters correspond to the path $w_0,\dots,w_k$.

    By \Pref{prop:subwords},
    \[
        \Pr(\exists t_{n-1}+k_n\le t<t_n:Y_{t-k_n+1}\cdots Y_t=\alpha_{n,v}(Y_{t_{n-1}+1},\dots,Y_{t-k_n})\mid R'_{t_{n-1}}=v) \gtrsim \frac{d^n}{d^{k_n}}=\frac{1}{n}.
    \]
    We note that if $R'_{t_{n-1}}=v$ and $Y_{t-k_n+1}\cdots Y_t=\alpha_{n,v}(Y_{t_{n-1}+1},\dots,Y_{t-k_n})$ for some $t_{n-1}+k_n\le t<t_n$, then by definition of $\alpha_{n,v}$ we have $\dist(o,R'_t)\ge\ff_{G'}(k_n)$; therefore we also have
    \[
        \Pr(A_n\mid R'_{t_{n-1}}=v) \gtrsim \frac{1}{n}.
    \]
    for all $v\in V$.

    The events $A_1,A_2,\dots$ are not independent, so we cannot apply the Borel-Cantelli lemma directly; however, the above is enough to follow the proof of the lemma. Indeed, we first note that by the strong Markov property for the random walk $R'_n$,
    \[
        \Pr(A_n^c\mid A_N^c,\dots,A_{n-1}^c)\lesssim 1-\frac{1}{n}
    \]
    for all $1\le N\le n$. This holds because $A_N^c,\dots,A_{n-1}^c$ depend only on $R'_{d^{N-1}},\dots,R'_{d^{n-1}}$, while we proved that $\Pr(A_n^c\mid R'_{d^{n-1}}=v)\gtrsim 1-\frac{1}{n}$ regardless of the value of $v\in V$.
    Next, for any $N\ge 1$ we have
    \[
        \Pr\left(\bigcap_{n=N}^{\infty}A_n^c\right) = \prod_{n=N}^{\infty}\Pr(A_n^c\mid A_N^c,\dots,A_{n-1}^c) \lesssim \prod_{n=N}^{\infty}\left(1-\frac{1}{n}\right)=0,
    \]
    where the last equality holds since $\sum_{n=N}^{\infty}\frac{1}{n}=\infty$. Therefore
    \begin{align*}
        \Pr((\limsup A_n)^c) = \Pr\left(\bigcup_{N=1}^{\infty}\bigcap_{n=N}^{\infty}A_n^c\right) = \lim_{N\to\infty}\Pr\left(\bigcap_{n=N}^{\infty}A_n^c\right) = 0.
    \end{align*}
    This shows that almost surely, the events $\set{A_n}$ hold infinitely often. Therefore, almost surely, there are infinitely many values of $t$ such that $\dist(o,R'_t)\ge\ff_G(\log_d t+\log_d\log_d t)$, which proves the same for $R_n$ as explained at the beginning of the proof.
\end{proof}

As random walks on semigroups are a special case of random walks on graphs, we first rephrase \Tref{thm:low-bound-io} for random walks on semigroups. For a finitely generated semigroup $S$, we denote by $\ff_S$ the function $\ff$ of its Cayley graph.

\begin{cor}\label{cor:lil-semigroups}
    Let $S$ be a $d$-generated semigroup with no finite right ideals, and let $R_n$ be a simple random walk on $S$. Then, almost surely, $\left|R_n\right|\ge\ff_S(\log_d n+\log_d\log_d n)$ for infinitely many values of $n$.
\end{cor}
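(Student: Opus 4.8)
The plan is to deduce \Cref{cor:lil-semigroups} directly from \Tref{thm:low-bound-io} by identifying the Cayley graph $G = \Cay(S,T)$ of $S$ with respect to a fixed $d$-element generating set $T$ as a rooted directed graph satisfying the hypotheses of that theorem. First I would recall from the preliminaries that $\Cay(S,T)$ is a rooted directed graph with root $o = 1$, that a path from $o$ to every vertex exists, and crucially that every outdegree equals $|T| = d$, so the uniform bound $\deg^+(v) \le d$ holds automatically (in fact with equality). The word metric $|a| = \dist(1,a)$ is exactly $\dist(o, \cdot)$ in this graph, and a simple random walk on $S$ with respect to $T$ is by definition a simple random walk on $\Cay(S,T)$, so the random variable $R_n$ in the corollary coincides with the $R_n$ of the theorem.

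The one substantive point is translating the hypothesis ``no finite right ideals'' into ``no finite strongly connected components.'' Here I would invoke the discussion in \Sref{sec2}: strongly connected components of $\Cay(S,T)$ are principal right ideals of $S$. Concretely, if $C$ is a finite strongly connected component containing a vertex $a \in S$, then $C$ contains $aS^1$ (the principal right ideal generated by $a$, where $S^1$ denotes $S$ with an identity adjoined, which here is just $S$ since our semigroups are monoids): indeed every element $at_1\cdots t_k$ is reachable from $a$ by a directed path, and conversely reachability within a strongly connected component is mutual, so $aS \subseteq C$ and hence $aS$ is finite. This produces a finite right ideal, contradicting the hypothesis. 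Therefore $G = \Cay(S,T)$ has no finite strongly connected components, and \Tref{thm:low-bound-io} applies verbatim.

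Applying the theorem, almost surely $\dist(o, R_t) \ge \ff_G(\log_d t + \log_d \log_d t)$ for infinitely many $t$; rewriting $\dist(o, R_t) = |R_t|$ and $\ff_G = \ff_S$ by our notational convention gives the claimed statement $|R_n| \ge \ff_S(\log_d n + \log_d \log_d n)$ for infinitely many $n$. I expect no real obstacle here; the only care needed is the ideal-theoretic translation in the previous paragraph, making sure the direction of edges (right multiplication) matches the convention that right ideals are the relevant one-sided ideals, and that the strongly-connected-component argument is stated cleanly. The remainder is a direct citation of \Tref{thm:low-bound-io} with $d$ the number of generators.

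\begin{proof}
    Let $T$ be a generating set of $S$ with $|T| = d$, and consider the Cayley graph $G = \Cay(S,T) = (V,E)$, a rooted directed graph with root $o = 1$. By the discussion in \Sref{sec2}, every outdegree in $G$ equals $|T| = d$, so in particular $\deg^+(v) \le d$ for all $v \in V$. Moreover $\dist(o,a) = |a|$ for all $a \in S$, and a simple random walk on $S$ with respect to $T$ is precisely a simple random walk on $G$.

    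We claim that $G$ has no finite strongly connected components. Suppose $C \subseteq V$ is a finite strongly connected component, and pick $a \in C$. For any $b = at_1\cdots t_k \in aS$ (with $t_1,\dots,t_k \in T$), there is a directed path in $G$ from $a$ to $b$, so $b$ lies in the strongly connected component of $a$, namely $b \in C$. Hence $aS \subseteq C$, so $aS$ is a finite right ideal of $S$, contradicting our assumption. Therefore $G$ has no finite strongly connected components.

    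Applying \Tref{thm:low-bound-io} to $G$ with the bound $d$ on outdegrees, we conclude that almost surely
    \[
        \dist(o, R_n) \ge \ff_G(\log_d n + \log_d \log_d n)
    \]
    for infinitely many $n$. Since $\dist(o, R_n) = |R_n|$ and $\ff_G = \ff_S$ by definition, this is exactly the assertion of the corollary.
\end{proof}
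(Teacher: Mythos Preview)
Your overall strategy---apply \Tref{thm:low-bound-io} to the Cayley graph---is exactly what the paper intends, and all the identifications (root, outdegree bound, word metric, random walk) are correct. However, the one substantive step, translating ``no finite right ideals'' into ``no finite strongly connected components,'' contains a genuine error.

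You write: ``there is a directed path in $G$ from $a$ to $b$, so $b$ lies in the strongly connected component of $a$.'' This implication is false in general: a directed path from $a$ to $b$ does not place $b$ in the strongly connected component of $a$ unless there is also a directed path from $b$ back to $a$. Concretely, take $S=(\mathbb{N},+)$ with generating set $\{1,2\}$: every strongly connected component of $\Cay(S,T)$ is a singleton (hence finite), yet $S$ has no finite right ideals. So under the standard definition of strongly connected component the implication you are trying to prove is simply not true, and the informal sentence from \Sref{sec2} that you invoke is itself imprecise on this point.

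The clean fix uses the other correspondence recorded in \Sref{sec2}: closed subsets of $\Cay(S,T)$ (sets with no outgoing edges) are exactly the right ideals of $S$. What the proof of \Tref{thm:low-bound-io} actually uses (see the proof of \Pref{prop:f-bound}) is that from every vertex one can reach vertices arbitrarily far from the root, i.e.\ that every forward-reachable set is infinite. In the Cayley graph the forward-reachable set of $a$ is the principal right ideal $aS^1$, which is infinite by hypothesis; equivalently, a finite \emph{absorbing} strongly connected set would be a finite closed set and hence a finite right ideal, which is excluded. Replacing your second paragraph with this observation repairs the argument.
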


While this bound is clearly not optimal for most semigroups, there are examples for which this is tight.

\begin{exmpl}\label{exmpl:speed-bounded-dist-not}
    Consider the semigroup $S=\sg{x,y\mid xy=y^2=y}$, and let $R_n$ be a simple random walk on $S$. We first note that by the diamond lemma, any element of $S$ can be written uniquely either as $x^m$ or as $yx^m$ for some integer $m\ge 0$. Therefore $\ff_S(n,a)=\left|a\right|+n$ for all $a\in S$, since $\left|ax^n\right|=\left|a\right|+n$, and thus $\ff_S(n)=n$ for all $n\in\N$. \Cref{cor:lil-semigroups} shows that, almost surely, $\left|R_n\right|\ge\log_d n+\log_d\log_d n$ for infinitely many values of $n$. We will show that this is optimal for the random walk $R_n$ on $S$.

    Consider the random variables $\set{\left|R_n\right|}_{n=0}^{\infty}$. We have $R_0=0$, $R_1=1$, and for all $n\ge 1$ we have $\Pr(R_{n+1}=R_n+1)=\Pr(R_{n+1}=1)=\frac{1}{2}$. We can therefore describe the distribution of $\left|R_n\right|$ as follows: given an infinite sequence of independent tosses of a fair coin, $\left|R_n\right|$ denotes the length of the current run of the tosses. By \cite{ErdosRevesz75}, for any $\eps>0$ we have $\left|R_n\right|\le\log_2 n+(1+\eps)\log_2\log_2 n$ for all but finitely many values of $n$.
\end{exmpl}

\section{Bounded speed}\label{sec8}

We finish the paper by considering the case where $\E\left|R_n\right|$ is bounded independently of $n$. This is the case, of course, if $S$ is finite, since then $\left|R_n\right|$ is itself bounded. To study this case, we utilize ideas from \cite{HognasMukherjea11}.

\begin{prop}\label{prop:finite-ideals}
    Let $S$ be a finitely generated semigroup, and let $R_n$ be a simple random walk on $S$. If $S$ contains a finite one-sided ideal, then $\E\left|R_n\right|$ is bounded.
\end{prop}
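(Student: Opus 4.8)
The plan is to show that if $S$ contains a finite one-sided ideal, then the random walk $R_n$ almost surely enters that ideal after finitely many steps and is thereafter confined to a finite set, which forces $\E|R_n|$ to be bounded. First I would reduce to the case of a finite \emph{right} ideal: a finite left ideal $L$ generates, under right multiplication by the (finitely many) generators, a right ideal contained in the union of $L\cdot T^{\le k}$ over bounded $k$—more carefully, $SL$ is a two-sided ideal and $S L S$ is finite if $L$ is finite and $S$ is finitely generated, so $S$ contains a finite two-sided ideal $I$, which is in particular a finite right ideal. (If instead one prefers to argue directly, note that a finite left ideal need not be a right ideal, so passing through the two-sided ideal $SLS$, or observing that $L \cup LT \cup \cdots \cup LT^{|L||T|}$ stabilizes to a finite right ideal, is the cleanest route.) So it suffices to treat a finite right ideal $I \subseteq S$.

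Next, translating to the Cayley graph $\Cay(S,T)$, a right ideal is exactly a closed vertex set—one with no edges leaving it—so $I$ is a finite set of vertices that the random walk cannot exit once it has entered. The key probabilistic step is then: starting from the root $1$, the walk $R_n$ enters $I$ almost surely in finitely many steps. Since $I$ is nonempty and there is a path in $\Cay(S,T)$ from $1$ to every vertex, in particular there is a path of some length $\ell_0$ from $1$ to some $a \in I$; this path corresponds to a word $t_1\cdots t_{\ell_0}$ over $T$, so $\Pr(R_{\ell_0} \in I) \ge |T|^{-\ell_0} =: p > 0$. By the Markov property, from \emph{any} vertex $v \notin I$ the walk has probability at least $p$ of being inside $I$ within the next $\ell_0$ steps (here I use that from $v$ one can reach $I$ via $v$'s preimage structure—more precisely, since $I$ is a right ideal, $vt_1'\cdots t_{\ell_0'}'$ lands in $I$ for a suitable word; one may need $\ell_0' \le$ some uniform bound, which holds because $S/{\sim}$ with $I$ collapsed is finite, or simply because once any generator-word reaching $I$ from $1$ is fixed, $v \cdot (\text{that word}) \in SI \subseteq I$ using that $I$ is a right ideal, giving a \emph{uniform} $\ell_0$ and $p$). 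A standard geometric-decay argument then gives $\Pr(R_n \notin I) \le (1-p)^{\lfloor n/\ell_0 \rfloor} \to 0$, and in fact $\tau := \inf\{n : R_n \in I\}$ has exponential tails.

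It remains to convert this into boundedness of $\E|R_n|$. Once $R_\tau \in I$, we have $R_n \in I$ for all $n \ge \tau$, so $|R_n| \le D := \max_{a \in I}|a|$ for all $n \ge \tau$; and for $n < \tau$ we have the trivial bound $|R_n| \le n$. Hence
\[
\E|R_n| \le D + \E\big[\, n \cdot \ind{\{\tau > n\}} \,\big] \le D + \sum_{n=1}^{\infty} n\,\Pr(\tau > n) \le D + \sum_{n=1}^{\infty} n (1-p)^{\lfloor n/\ell_0\rfloor},
\]
and the last series converges, giving a bound independent of $n$. (One could sharpen the middle term to $\sum_{k > n} \Pr(\tau > k)$ plus boundary terms, but convergence of $\sum n(1-p)^{\lfloor n/\ell_0\rfloor}$ already suffices.)

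\textbf{Main obstacle.} The one genuinely delicate point is the \emph{uniformity} in the first-entrance estimate: I need a single $p>0$ and step-count $\ell_0$ that work as lower bounds for $\Pr(\text{enter } I \text{ within } \ell_0 \text{ steps})$ from every starting vertex, not just from the root. This is where the right-ideal property is essential—if $w$ is a fixed word over $T$ with $1 \cdot w \in I$, then $v \cdot w \in S I \subseteq I$ for \emph{every} $v \in S$ since $I$ absorbs right multiplication, so the \emph{same} word $w$ drives any vertex into $I$, yielding uniform $\ell_0 = |w|$ and $p = |T|^{-|w|}$. Once this observation is in place the rest is routine; the reduction from a finite left (or two-sided) ideal to a finite right ideal is the other place to be slightly careful, and I would handle it by noting that for a finite left ideal $L$, the set $LT^* \cap (\text{words of length} \le |L|\cdot|T|)$ stabilizes to a finite right ideal contained in any two-sided ideal generated by $L$.
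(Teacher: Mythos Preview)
Your argument has two genuine gaps. First, the reduction from a finite left ideal to a finite right ideal is false: in the semigroup $S=\langle x,y\mid xy=y^2=y\rangle$ (the paper's \Eref{exmpl:speed-bounded-dist-not}), the singleton $\{y\}$ is a finite left ideal, yet $S$ has no finite right ideal whatsoever, since right-multiplication by $x$ strictly increases word length from every element. Neither $SLS$ nor the chain $L\cup LT\cup LT^2\cup\cdots$ is finite here. Second, in your right-ideal argument you write ``$v\cdot w\in SI\subseteq I$ since $I$ absorbs right multiplication'', but $SI\subseteq I$ is the defining property of a \emph{left} ideal; for a right ideal one only has $IS\subseteq I$, and $v\cdot a_0$ need not lie in $I$. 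In fact the walk need not enter the given $I$ at all: take $S=\{1,e,f\}$ with $e^2=e$, $f^2=f$, $ef=e$, $fe=f$; then $\{e\}$ and $\{f\}$ are disjoint finite right ideals and the simple walk on generators $\{e,f\}$ enters $\{f\}$ with probability only $\tfrac12$. So the uniformity step you correctly flagged as delicate does not merely need care---it is false as stated.

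The paper's argument sidesteps both problems by never claiming that $R_n$ lands in $I$. For a right ideal $I$ with $a_0\in I$ of length $m$, one waits for the first time $\tau$ at which the window $X_{\tau-m+1}\cdots X_\tau$ equals $a_0$; this $\tau$ has geometric tails purely because the $X_i$ are i.i.d., with no appeal to reachability in the Cayley graph. For $n\ge\tau$ one writes $R_n=R_{\tau-m}\cdot\bigl(a_0\cdot X_{\tau+1}\cdots X_n\bigr)$, and the bracketed factor lies in $I$ because $IS\subseteq I$; hence $|R_n|\le(\tau-m)+\max_{a\in I}|a|$, which is bounded in expectation since $\E[\tau]<\infty$. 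The walk is trapped not in $I$ but in the random finite translate $R_{\tau-m}\cdot I$. The left-ideal case is then genuinely symmetric: since $(X_1,\dots,X_n)$ and $(X_n,\dots,X_1)$ have the same distribution, one looks at the \emph{last} occurrence of $a_0$ and uses $SI\subseteq I$ on the prefix side instead.
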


\begin{proof}
    Write $R_n=X_1\cdots X_n$, where $X_1,X_2,\dots$ are i.i.d.\ random variables distributed uniformly on the generating set of $S$. We prove the claim for right ideals; the claim for left ideals follows from a similar argument.

    Let $I\le S$ be a finite right ideal of $S$, i.e., $IS=I$, and write $M=\max_{a\in I}\left|a\right|$. Fix some $a_0\in I$, and write $m=\left|a_0\right|$. Define the stopping time
    \[
        \tau=\inf\set{n\mid X_{n-m+1}\cdots X_n=a_0}.
    \]
    It is a classical fact that $\E[\tau]<\infty$. We include a short proof here for completeness. We note that $\Pr(\tau>n)\le\Pr\left(\forall j\le\frac{n}{m}:X_{m(j-1)+1}\cdots X_{mj}=a\right)\le\left(1-\frac{1}{d^m}\right)^{\floor{\frac{n}{m}}}$. Then
    \[
        \E\left|R_n\right| = \E\left[\left|R_n\right|\mid \tau\le n\right]\Pr(\tau\le n) + \E\left[\left|R_n\right|\mid \tau>n\right]\Pr(\tau>n) \le M+n\left(1-\frac{1}{d^m}\right)^{\floor{\frac{n}{m}}}
    \]
    is bounded, since the last term tends to $0$ as $n$ tends to $\infty$.

    This proves that if $S$ contains a finite right ideal, then $\E\left|R_n\right|$ is bounded, completing the proof.
\end{proof}

\begin{rem}
    If $S$ contains a finite right ideal $I$, then $R_n$ is almost surely trapped in a finite set. Indeed, the proof of \Pref{prop:finite-ideals} shows that $\Pr(\exists n:R_n\in I)=1$, and since $I$ is a right ideal $R_n\in I$ implies $R_m\in I$ for all $m\ge n$. However, this is not the case when $S$ contains a finite left ideal but no finite right ideals, as demonstrated in \Eref{exmpl:speed-bounded-dist-not2}.
\end{rem}

In some cases, \Pref{prop:finite-ideals} gives a complete characterization of the cases where the speed is bounded, as demonstrated in the following proposition. Recall that an inverse semigroup is a semigroup in which for every element $x$ there exists a unique element $y$ (`weak inverse') such that $xyx=x,yxy=y$.

\begin{prop}
    Let $S$ be a finitely generated commutative semigroup or an inverse semigroup, and let $R_n$ be a simple random walk on $S$. If $\E\left|R_n\right|$ is bounded, then $S$ contains a finite two-sided ideal.
\end{prop}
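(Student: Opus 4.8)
The plan is to show that if $S$ is commutative or inverse and has no finite two-sided ideal, then $\E|R_n|$ is unbounded, which by \Pref{prop:finite-ideals} already tells us the speed is bounded iff $S$ has a finite one-sided ideal — so it suffices to argue that in these two classes, having a finite one-sided ideal forces a finite two-sided ideal. I would split into the two cases. In the commutative case this is immediate: every one-sided ideal is two-sided, so a finite right (or left) ideal \emph{is} a finite two-sided ideal, and the proposition follows from \Pref{prop:finite-ideals} at once. The content is therefore entirely in the inverse-semigroup case.

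For the inverse case, recall that in an inverse semigroup the idempotents commute, and for any $a$ with weak inverse $a^{-1}$ both $aa^{-1}$ and $a^{-1}a$ are idempotents. The first step is to observe that if $I$ is a finite right ideal, then $I$ contains an idempotent: pick any $a\in I$; then $aa^{-1}\in aS$, and since $a^{-1}\in S$ (the monoid), $aa^{-1}=a\cdot a^{-1}\in I$ because $I$ is a right ideal; and $aa^{-1}$ is idempotent. So without loss of generality $I$ contains an idempotent $e$. The key structural fact I would use is that in an inverse semigroup the map $s\mapsto s^{-1}$ is an anti-automorphism, so it sends right ideals to left ideals: if $I$ is a right ideal then $I^{-1}=\{s^{-1}:s\in I\}$ is a left ideal, and it is finite if $I$ is. Now consider $J = I \cap I^{-1} \cdot I \cdot \dots$ — more simply, I would take $K = S e S$, the two-sided ideal generated by the idempotent $e\in I$. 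We have $eS\subseteq I$ (as $e\in I$, a right ideal), hence $SeS \subseteq SI$; and $SI = (I^{-1}S)^{-1} \subseteq (I^{-1})^{-1} = I$ using that $I^{-1}$ is a left ideal so $S I^{-1}\subseteq I^{-1}$, giving $SI = (I^{-1} S^{-1})^{-1}$... I should be careful: the clean statement is $Se \subseteq (eS)^{-1}\cdot(\text{something})$; the cleanest route is: $Se = (eS)^{-1}$ since $(se)^{-1} = e^{-1}s^{-1}=es^{-1}$ ranges over $eS$ as $s$ ranges over $S$ (using $e=e^{-1}$). Thus $Se = (eS)^{-1}$ is finite because $eS\subseteq I$ is finite. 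Then $SeS = (Se)S \subseteq$ the right ideal generated by the finite set $Se$; but $Se$ is already a left ideal, and $SeS = \bigcup_{f\in Se} fS$, each $fS\subseteq I$ since $f\in Se\subseteq S e\subseteq \dots$ — here I need $Se\subseteq I$, which holds because $Se=(eS)^{-1}$ and $eS\subseteq I$, so $Se\subseteq I^{-1}$, a finite left ideal, not necessarily inside $I$. Nonetheless $Se$ is finite, so $SeS=\bigcup_{f\in Se}fS$ is a finite union of sets $fS$; and I claim each $fS$ is finite: $fS$ with $f$ idempotent-related... actually $f\in Se$ need not be idempotent, but $fS = (S f^{-1})^{-1}$ and $f^{-1}\in eS \subseteq I$ so $Sf^{-1}\subseteq$ (left ideal gen by element of $I$)...

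The cleanest formulation, which is what I would actually write: let $I$ be a finite right ideal containing an idempotent $e$. Then $Se$ is finite (being $(eS)^{-1}$, the image of the finite set $eS\subseteq I$ under the bijective inversion map), and $Se$ is a left ideal. Symmetrically, starting from the finite left ideal $Se$ and its idempotent $e$, the set $eS\cdot(\text{no}) $ — rather, observe $eSe\subseteq eS\subseteq I$ is finite, $eSe$ is a subsemigroup with zero-ish behavior, and the two-sided ideal $SeS$ satisfies $SeS = (Se)(eS)$ (using $e^2=e$), which is contained in $\bigcup_{f\in Se} f(eS)$, a finite union of translates of the finite set $eS$, hence finite. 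Finally $SeS$ is a two-sided ideal, and it is nonempty, so $S$ has a finite two-sided ideal, contradicting our assumption. Therefore $S$ has no finite one-sided ideal, so by \Pref{prop:finite-ideals}'s converse direction — which is exactly the contrapositive we need — boundedness of $\E|R_n|$ is impossible unless... wait: \Pref{prop:finite-ideals} only gives one implication. So the logic must be: \emph{assume} $\E|R_n|$ is bounded; we want a finite two-sided ideal. We do \emph{not} get a one-sided ideal for free. So the inverse-semigroup argument must instead go: bounded speed $\Rightarrow$ $R_n$ is (essentially) confined, which should force the existence of a finite subsemigroup that is an ideal.

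Let me restate the real plan. Assume $\E|R_n|$ is bounded, say by $M$. Then (as in \Pref{prop:finite-ideals}'s remark, or by a Borel–Cantelli / recurrence argument) the walk $R_n$ returns infinitely often to the finite ball $B(1,2M)$ — indeed by Markov's inequality $\Pr(|R_n|\le 2M)\ge 1/2$ for all $n$, so some element $a\in B(1,2M)$ has $R_n=a$ for infinitely many $n$ with positive probability; by a $0$–$1$ argument the walk a.s. hits some such $a$. Since generators are non-degenerate, from $a$ the walk reaches $ab$ for every word $b$ in the generators with positive probability, and these must all lie in a bounded ball infinitely often — this forces the principal right ideal $aS$ to be \emph{finite}. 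Now $aS$ is a finite right ideal. In the commutative case $aS$ is already two-sided; done. In the inverse case, apply the idempotent-extraction and inversion-symmetry argument sketched above to upgrade the finite right ideal $aS$ to a finite two-sided ideal $SeS$ where $e$ is an idempotent in $aS$.

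\medskip

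The main obstacle, and the step I would be most careful about, is the passage ``bounded speed $\Rightarrow$ some principal right ideal $aS$ is finite.'' Boundedness of $\E|R_n|$ does not on its face say the walk is confined; one must argue that a walk which spends a positive fraction of its time in a fixed finite ball $B(1,2M)$ must, with positive probability, actually enter that ball at a moment from which \emph{all} its future lies in a finite ideal — otherwise, from infinitely many visits to a fixed $a$ with $aS$ infinite, one could push the walk arbitrarily far out with probability bounded below, contradicting a uniform tail bound obtained from $\E|R_n|\le M$ via Markov. Making this ``escape from an infinite principal right ideal'' argument quantitative — getting a lower bound on $\E|R_{n}|$ that blows up if every $aS$ hit by the walk is infinite — is the crux; it parallels but is more delicate than the stopping-time estimate in \Pref{prop:finite-ideals}, since there we produced the finite ideal by hypothesis whereas here we must locate it. The inverse-semigroup algebra (idempotents, the inversion anti-automorphism, $SeS=(Se)(eS)$) is then routine by comparison.
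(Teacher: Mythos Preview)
Your proposal has a genuine gap at precisely the point you flag as ``the crux'': the implication \emph{bounded speed $\Rightarrow$ some principal right ideal $aS$ is finite}. This implication is false for general semigroups, and the paper's own \Eref{exmpl:speed-bounded-dist-not2} is a counterexample. In $S=\langle x,y\mid xy=y^2=y\rangle$ the speed is bounded (since $Sy=\{y\}$ is a finite left ideal), yet every principal right ideal $aS$ is infinite: for any element $a$ one has $y\in aS$ and hence $yx^m\in aS$ for all $m\ge 0$. So your escape-from-an-infinite-right-ideal heuristic cannot work without already invoking commutativity or the inverse property \emph{inside} that step, and you give no indication of how to do so. The algebra you sketch afterwards (extract an idempotent $e$ from a finite right ideal, observe $Se=(eS)^{-1}$ is finite, and conclude $SeS=(Se)(eS)$ is finite) is correct and pleasant, but it is conditional on an input you have not produced.

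The paper proceeds quite differently. From $\E|R_n|\le M$ and Markov's inequality one gets $\limsup_n \Pr(R_n=a)>0$ for some fixed $a$, i.e.\ positive recurrence of the walk. At this point the paper invokes a structural result of H{\"o}gn{\"a}s--Mukherjea: the Ces\`aro averages of the step distribution converge to a probability measure supported on a completely simple minimal ideal $K$ with \emph{finite group factor} in its Rees representation $K\cong E\times G\times F$. The hypotheses on $S$ are used only now, to force $E$ and $F$ to be trivial: a short direct computation in the commutative case, and a weak-inverse uniqueness argument in the inverse case. Hence $K=G$ is a finite two-sided ideal. Note that in general $K$ can be infinite (when $E$ or $F$ is), which is exactly why your route stalls without the extra hypotheses.
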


\begin{proof}
    Suppose that $\E\left|R_n\right|\le M$ for all $n$. By Markov's inequality,
    \[
        \Pr(\left|R_n\right|\le 2M)\ge\frac{1}{2},
    \]
    for all $n$, and thus there exists $a\in B_S(1,2M)$ such that $\limsup_{n\to\infty}\Pr(R_n=a)>0$. This shows that the random walk $R_n$ is \textit{positive recurrent}. By \cite[Corollary 3.2]{HognasMukherjea11}, the Ces\`{a}ro sums $\frac{1}{n}\sum_{j=1}^n\mu^j$ converge to a probability measure $\pi$ supported on a completely simple minimal ideal $K$ of $S$ with a finite group factor in its Rees representation. In other words, $K=E\times G\times F$ where $G$ is a finite group, $E$ and $F$ are semigroups, and the multiplication is given by
    \[
        (e,g,f)(e',g',f') = (e,g\phi(f,e')g',f')
    \]
    for some function $\phi\colon F\times E\to G$. But if $S$ is either a commutative semigroup or an inverse semigroup, we must have $E=F=\set{1}$, so $K=G$ is a finite ideal of $S$.

    Indeed, if $S$ is commutative then for any $e,e'\in E,f,f'\in F$ we have $(e,\phi(f,e'),f')=(e,1,f)(e',1,f')=(e',1,f')(e,1,f)=(e',\phi(f',e),f)$ so $E,F$ are trivial.
    Furthermore, given $e\in E,f\in F$, for every $e'\in E,f'\in F$ we have that $(e',\phi(f,e')^{-1}\phi(f',e)^{-1},f')$ is a weak inverse of $(e,1,f)$, so $S$ cannot be an inverse semigroup unless $E,F$ are trivial.
\end{proof}

We conclude by giving an example of a semigroup for which the speed is bounded, while the random walk itself is not trapped in a finite subset almost surely.

\begin{exmpl}\label{exmpl:speed-bounded-dist-not2}
    Consider the semigroup $S=\sg{x,y\mid xy=y^2=y}$ of \Eref{exmpl:speed-bounded-dist-not}, and let $R_n$ be a simple random walk on $S$. Note that $\E\left|R_n\right|$ is bounded, since $y$ generates a finite left ideal $Sy=\set{y}$. However, as we saw before, the distance $\left|R_n\right|$ itself is almost surely unbounded.
\end{exmpl}

\appendix

\section{Appearances of subwords}\label{appendix}

The study of the number of appearances of subwords in a random word is a well-studied subject. In this section we consider the appearance of subwords in a random word, where the subwords change with the position and depend on the prefix of the word.

\begin{prop}\label{prop:subwords}
  Let $\Sigma=\set{x_1,\dots,x_d}$ be a finite alphabet, and let $X=X_1\cdots X_n\in\Sigma^n$ be a random word of length $n$ in $\Sigma$. Fix $k\le\frac{n}{2}$, and let $g\colon \Sigma^*\to\Sigma^k$ be a function. Then
  \[
    \Pr(\exists 1\le j\le n-k:X_{j+1}\cdots X_{j+k}=g(X_1\cdots X_j)) \ge \frac{1}{4(\frac{d^k}{n}+1)}.
  \]
\end{prop}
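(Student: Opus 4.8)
The plan is to partition the word $X = X_1\cdots X_n$ into consecutive blocks of length $k$ and look for the first block that happens to equal $g$ evaluated on everything that came before it. Concretely, set $L = \lfloor n/k \rfloor \ge 2$, and for $1 \le i \le L-1$ let $B_i$ be the $i$-th length-$k$ block, i.e.\ $B_i = X_{(i-1)k+1}\cdots X_{ik}$, and let $P_i = X_1\cdots X_{(i-1)k}$ be the prefix before it. Define $E_i$ to be the event that $B_i = g(P_i)$ and $B_1,\dots,B_{i-1}$ all fail the analogous test. The events $E_1,\dots,E_{L-1}$ are disjoint, and each implies the event in the statement (with $j = (i-1)k$). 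So it suffices to lower bound $\sum_i \Pr(E_i) = \Pr(\bigcup_i E_i)$.

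The key observation is that, conditioned on the prefix $P_i$ (equivalently on $X_1,\dots,X_{(i-1)k}$), the block $B_i$ is a uniform random word in $\Sigma^k$ independent of $P_i$, so $\Pr(B_i = g(P_i) \mid P_i) = d^{-k}$ exactly, regardless of $P_i$. Hence if $T$ denotes the first index $i \in \{1,\dots,L-1\}$ with $B_i = g(P_i)$ (and $T = \infty$ if none), then $T$ is stochastically dominated below by — in fact exactly equal in distribution to — a truncated geometric variable: $\Pr(T > m) = (1 - d^{-k})^m$ for $0 \le m \le L-1$. Therefore
\[
\Pr\Big(\bigcup_{i=1}^{L-1} E_i\Big) = \Pr(T \le L-1) = 1 - (1 - d^{-k})^{L-1}.
\]
It remains to check that $1 - (1-d^{-k})^{L-1} \ge \frac{1}{4(d^k/n + 1)}$. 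Using $1 - (1-p)^M \ge \frac{Mp}{1 + Mp}$ (valid for $p \in [0,1]$, $M \ge 0$), with $p = d^{-k}$ and $M = L - 1$, the right side is $\frac{(L-1)d^{-k}}{1 + (L-1)d^{-k}}$. Since $k \le n/2$ we have $L = \lfloor n/k\rfloor \ge n/k - 1 \ge n/(2k)$, hmm — more carefully, $k \le n/2$ gives $n/k \ge 2$, so $L \ge 2$ and $L - 1 \ge L/2 \ge n/(2k) \cdot \tfrac12$; a cleaner route is $L - 1 \ge \tfrac12 \lfloor n/k \rfloor \ge \tfrac{1}{2}\cdot\tfrac{n}{k}\cdot\tfrac{1}{2}$... in any case one gets $L - 1 \gtrsim n/k$, and then $(L-1)d^{-k} \gtrsim n/(k d^k) \gtrsim n/d^k$ up to the constant absorbed in the final $\tfrac14$. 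Plugging in, $\frac{(L-1)d^{-k}}{1+(L-1)d^{-k}} \ge \frac{1}{4}\cdot\frac{1}{d^k/n + 1}$ after bounding the crude constants.

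The only delicate point is bookkeeping with the floors and the constant $4$: one must verify that replacing $L-1$ by a clean multiple of $n/k$ and then clearing the factor $k$ from $k d^k$ (using $d \ge 1$, $k \ge 1$, and $k \le n/2$) does not cost more than the allotted factor of $4$. I expect this constant-chasing to be the main (minor) obstacle; the probabilistic core — the exact geometric law of $T$ thanks to block independence — is immediate. One should also dispose of the trivial degenerate case $k = 0$ (or note $k \ge 1$ is implicit) so that $L - 1 \ge 1$ and the geometric bound is nonvacuous.
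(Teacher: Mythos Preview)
Your block argument has a genuine gap: it loses a factor of $k$ that cannot be absorbed into the constant $4$. By testing only at block boundaries you get roughly $L-1\asymp n/k$ independent trials, so your bound is
\[
1-(1-d^{-k})^{L-1}\ \ge\ \frac{(L-1)d^{-k}}{1+(L-1)d^{-k}}\ \asymp\ \frac{n}{kd^k+n},
\]
not $\frac{n}{d^k+n}$. The line ``$(L-1)d^{-k}\gtrsim n/(kd^k)\gtrsim n/d^k$ up to the constant absorbed in the final $\tfrac14$'' is where the argument breaks: $k$ is not a constant, it can be as large as $n/2$. Concretely, take $d=2$, $k=n/2$, $n$ large. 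Then $L=2$, so your bound gives at most $1-(1-2^{-n/2})^2\le 2\cdot 2^{-n/2}$, while the target is $\tfrac{n}{4(2^{n/2}+n)}\sim \tfrac{n}{4}\cdot 2^{-n/2}$, larger by a factor of order $n$. No amount of bookkeeping with floors recovers this.

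The paper instead uses all $n-k$ positions via the second moment method. With $N=\sum_{j=1}^{n-k} I_j$ one has $\E[N]=(n-k)d^{-k}\ge \tfrac{n}{2}d^{-k}$, and the point is that the overlapping pairs are tame: for $j'-j=r<k$ one has $\E[I_jI_{j'}]\le d^{-(2k-r)}$, and summing the geometric series in $r$ contributes only $O(n d^{-k})$ to $\E[N^2]$, so $\E[N^2]\le n d^{-k}+n^2 d^{-2k}$. Paley--Zygmund then gives exactly $\Pr(N>0)\ge \tfrac{1}{4(d^k/n+1)}$. Your disjoint-block idea throws away the overlapping positions to gain independence, but that is precisely where the missing factor of $k$ goes; the second-moment calculation shows the overlaps were harmless all along.
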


\begin{proof}
  Let $N=\sum_{j=1}^{n-k}I_j$, where $I_j=\ind{\set{X_{j+1}\cdots X_{j+k}=g(X_1\cdots X_j)}}$. We will compute the first two moments of $N$, and apply the Paley--Zigmund inequality to achieve the desired lower bound.

  To compute $\E[N]$, note that
  \[
    \E[I_j] = \Pr(X_{j+1}\cdots X_{j+k}=g(X_1\cdots X_j)) = \frac{1}{d^k},
  \]
  and thus
  \[
    \E[N] = \sum_{j=1}^{n-k}\E[I_j] = \frac{n-k}{d^k} \ge \frac{n}{2d^k}.
  \]
  For the second moment, let $1\le j<j'\le n-k$. If $j'-j\ge k$, then
  \[
    \E[I_jI_{j'}] = \Pr(X_{j+1}\cdots X_{j+k}=g(X_1\cdots X_j),X_{j'+1}\cdots X_{j'+k}=g(X_1\cdots X_{j'}))=\frac{1}{d^{2k}},
  \]
  since the equalities are independent. If $j'-j<k$, there is overlap between $X_{j+1}\cdots X_{j+k}$ and $X_{j'+1}\cdots X_{j'+k}$ of size $k-(j'-j)$, and thus
  \[
    \E[I_jI_{j'}] = \Pr(X_{j+1}\cdots X_{j+k}=g(X_1\cdots X_j),X_{j'+1}\cdots X_{j'+k}=g(X_1\cdots X_{j'})) \le \frac{1}{d^{2k-(j'-j)}}
  \]
  (where the last inequality is an equality if and only if $g(X_1\cdots X_j)$ and $g(X_1\cdots X_{j'})$ agree on the overlap, otherwise the left hand side is $0$). Therefore
  \begin{align*}
    \E[N^2] & = \sum_{j=1}^{n-k}\E[I_j] + 2\sum_{1\le j<j'\le n-k}\E[I_jI_{j'}] \\
    & = \frac{n-k}{d^k} + 2\sum_{r=1}^{k-1}\sum_{j=1}^{n-k-r}\E[I_jI_{j+r}] + 2\sum_{j'-j\ge k}\E[I_jI_{j'}] \\
    & \le \frac{n-k}{d^k} + 2\sum_{r=1}^{k-1}\frac{n-k}{d^{2k-r}} + \frac{(n-k)^2}{d^{2k}} \\
    & \le \frac{n-k}{d^k} + \frac{2}{d^k(d-1)} + \frac{(n-k)^2}{d^{2k}} \le \frac{n}{d^k} + \frac{n^2}{d^{2k}}.
  \end{align*}

  By the Paley--Zygmund inequality,
  \[
    \Pr(N>0) \ge \frac{\E[N]^2}{\E[N^2]} \ge \frac{\frac{n^2}{4d^{2k}}}{\frac{n}{d^k}+\frac{n^2}{d^{2k}}} \ge \frac{1}{4(\frac{d^k}{n}+1)}
  \]
  completing the proof.
\end{proof}

\bibliographystyle{plain}
\bibliography{refs}

\end{document}